\providecommand{\U}[1]{\protect\rule{.1in}{.1in}}
\newtheorem{theorem}{Theorem}[section]
\newtheorem{corollary}[theorem]{Corollary}
\newtheorem{definition}[theorem]{Definition}
\newtheorem{assumption}[theorem]{Assumption}
\newtheorem{proposition}[theorem]{Proposition}
\newtheorem{remark}[theorem]{Remark}
\newenvironment{proof}[1][Proof]{\noindent \textbf{#1.} }{\  \rule{0.5em}{0.5em}}
\numberwithin{equation}{section}
\begin{document}

\title{Stochastic maximum principle, dynamic programming principle, and their
relationship for fully coupled forward-backward stochastic controlled systems}
\author{Mingshang Hu \thanks{Zhongtai Securities Institute for Financial Studies,
Shandong University, Jinan, Shandong 250100, PR China. humingshang@sdu.edu.cn.
Research supported by NSF (No. 11671231) and Young Scholars Program of
Shandong University (No. 2016WLJH10). }
\and Shaolin Ji\thanks{Zhongtai Securities Institute for Financial Studies,
Shandong University, Jinan, Shandong 250100, PR China. jsl@sdu.edu.cn
(Corresponding author). Research supported by NSF (No. 11171187, 11222110 and
11221061), Programme of Introducing Talents of Discipline to Universities of
China (No.B12023). Hu and Ji's research was partially supported by NSF (No.
10921101) and by the 111 Project (No. B12023) }
\and Xiaole Xue\thanks{Zhongtai Securities Institute for Financial Studies,
Shandong University, Jinan 250100, China. Email: xiaolexue1989@gmail.com,
xuexiaole.good@163.com.} }
\maketitle

\textbf{Abstract}. Within the framework of viscosity solution, we study the
relationship between the maximum principle (MP) in \cite{Hu-JX} and the
dynamic programming principle (DPP) in \cite{Hu-JX-DPP} for a fully coupled
forward-backward stochastic controlled system (FBSCS) with a nonconvex control
domain. For a fully coupled FBSCS, both the corresponding MP and the corresponding Hamilton-Jacobi-Bellman (HJB) equation combine an
algebra equation respectively. So this relationship becomes more complicated and almost no work
involves this issue. With the help of a new decoupling technique, we obtain
the desirable estimates for the fully coupled forward-backward variational
equations and establish the relationship. Furthermore, for the smooth case, we
discover the connection between the derivatives of the solution to the algebra
equation and some terms in the first and second-order adjoint equations.
Finally, we study the local case under the monotonicity conditions as in
\cite{Wu98, Li-W} and obtain the relationship between the MP in \cite{Wu98}
and the DPP in \cite{Li-W}.

{\textbf{Key words}. }
fully coupled forward-backward stochastic differential
equations,
global stochastic maximum principle, dynamic programming principle, viscosity
solution, monotonicity condition

\textbf{AMS subject classifications.} 93E20, 60H10, 35K15

\addcontentsline{toc}{section}{\hspace*{1.8em}Abstract}

\section{Introduction}

It is well-known that Pontryagin's maximum principle (MP) and Bellman's
dynamic programming principle (DPP) are two of the most important approaches
in solving optimal control problems and there exist close relationship between
them. The relation between the MP and the DPP will help us understand the MP
and the DPP in a more profound way and is studied in many literatures (see
\cite{Yong-Zhou}, \cite{Nie-SW-2} and the references therein). The results on
their connection for deterministic optimal control problems can be seen in
Fleming and Rishel \cite{Fleming}, Barron and Jensen \cite{Barron} and Zhou
\cite{Zhou-1990-1}. For stochastic optimal control problems, the classical
results on the relationship between the MP and the DPP were studied by
Bensoussan \cite{Bensoussan}. Within the framework of viscosity solution, Zhou
\cite{Zhou-1990-2, Zhou-1991} obtained the relation between these two approaches.

In this paper, we study the relationship between the MP and the DPP for a
stochastic optimal control problem where the system is governed by the
following controlled fully coupled forward-backward stochastic differential
equation (FBSDE):
\begin{equation}
\left\{
\begin{array}
[c]{rl}%
dX(t)= & b(t,X(t),Y(t),Z(t),u(t))dt+\sigma(t,X(t),Y(t),Z(t),u(t))dB(t),\\
dY(t)= & -g(t,X(t),Y(t),Z(t),u(t))dt+Z(t)dB(t),\;t\in\lbrack0,T],\\
X(0)= & x_{0},\ Y(T)=\phi(X(T)),
\end{array}
\right.  \label{intro--fbsde}%
\end{equation}
and the cost functional is defined by the solution to the backward stochastic
differential equation (BSDE) in (\ref{intro--fbsde}) at time $0$, i.e.,
\begin{equation}
J(u(\cdot))=Y(0). \label{intro-cost}%
\end{equation}
When the coefficients of the forward stochastic differential equation (SDE) in
(\ref{intro--fbsde}) are independent of the terms $Y(\cdot)$ and $Z(\cdot)$,
we call (\ref{intro--fbsde}) with the cost functional (\ref{intro-cost}) a
decoupled forward-backward stochastic controlled system (FBSCS). FBSCSs can be
used to describe some important problems in mathematical finance and
stochastic control theory. For example, the portfolios of a large investor,
the generalized stochastic recursive utilities of consumers, the
leader-follower stochastic differential games and principal-agent problems may
involve in solving optimal controls for fully coupled FBSCSs
\cite{Cvi-Zhang,Elkaroui-PQ,Yong}.

Peng \cite{Peng-1993} first established a local stochastic maximum principle
for the decoupled FBSCS. Then the local stochastic maximum principles for
other various problems were studied in Dokuchaev and Zhou
\cite{Dokuchaev-Zhou}, Ji and Zhou \cite{Ji-Zhou} and Shi and Wu \cite{Shi-Wu}
(see also the references therein). When the control domain is nonconvex, the
global stochastic maximum principle for a decoupled FBSCS has not been
obtained for a long time since Peng \cite{Peng-open} proposed it as an open
problem. For this open problem, Yong \cite{Yong-2010} and Wu \cite{Wu-2013}
derived stochastic maximum principles which contain unknown parameters. Hu
\cite{Hu-2017} studied this decoupled FBSCS and obtained the first-order and second-order
variational equations for the BSDE in (\ref{intro--fbsde}) which leads to a
completely novel global maximum principle. Recently, Hu, Ji and Xue
\cite{Hu-JX} obtained a global stochastic maximum principle for the fully
coupled FBSCS. In contrast with the progresses in deriving stochastic maximum
principles, Peng \cite{Peng-1992, Peng-lecture} deduced the DPP and introduced
the generalized Hamilton-Jacobi-Bellman (HJB) equation for a decoupled FBSCS.
Under monotonicity conditions, Li and Wei \cite{Li-W} and Li \cite{Li jun}
built the DPP and proved that the value function is a viscosity solution to
the generalized HJB equation for a fully coupled FBSCS. Then, by establishing
the DPP and various properties of the value function, Hu, Ji and Xue
\cite{Hu-JX-DPP} studied the existence and uniqueness of viscosity solutions
to the generalized HJB equation for a fully coupled FBSCS.

As for the relationship between the MP and the DPP for the decoupled FBSCS,
Shi \cite{Shi} and Shi and Yu \cite{Shi-Y} investigated the local case in
which the control domain is convex and the value function is smooth; within
the framework of viscosity solution, Nie, Shi and Wu \cite{Nie-SW-1} studied
the local case; Nie, Shi and Wu \cite{Nie-SW-2} studied the general case with
the help of the first-order and second-order adjoint equations which are introduced
in Hu \cite{Hu-2017}. Up to our knowledge, there are few works about the
connection between the MP and the DPP for fully coupled FBSCSs. Especially,
there is no research results in the case that the diffusion coefficient
$\sigma$ of the forward SDE in (\ref{intro--fbsde}) depends on the term $Z$.

Inspired by the above works, in this paper, we investigate the connection
between the MP and the DPP for fully coupled FBSCSs with a nonconvex control
domain. We obtain that the connection between the adjoint process $(p,P)$ in
the maximum principle in \cite{Hu-JX} and the first-order and second-order
sub- (resp. super-) jets of the value function $W$ in \cite{Hu-JX-DPP} in the
$x$-variable is%
\begin{equation}
\left\{
\begin{array}
[c]{l}%
\{p(s)\}\times\lbrack P(s),\infty)\subseteq D_{x}^{2,+}W(s,\bar{X}%
^{t,x;\bar{u}}(s)),\\
D_{x}^{2,-}W(s,\bar{X}^{t,x;\bar{u}}(s))\subseteq\{p(s)\}\times(-\infty
,P(s)],\text{ }\forall s\in\lbrack t,T],\text{ }P-a.s.
\end{array}
\right.  \label{intr-rel}%
\end{equation}
and the connection between the function $\mathcal{H}_{1}$ and the right sub-
(resp. super-) jets of $W$ in the $t$-variable is
\begin{equation}
\left\{
\begin{array}
[c]{l}%
\lbrack\mathcal{H}_{1}(s,\bar{X}^{t,x;\bar{u}}(s),\bar{Y}^{t,x;\bar{u}%
}(s),\bar{Z}^{t,x;\bar{u}}(s)),\infty)\subseteq D_{t+}^{1,+}W(s,X^{t,x;\bar
{u}}(s)),\\
D_{t+}^{1,-}W(s,X^{t,x;\bar{u}}(s))\subseteq(-\infty,\mathcal{H}_{1}(s,\bar
{X}^{t,x;\bar{u}}(s),\bar{Y}^{t,x;\bar{u}}(s),\bar{Z}^{t,x;\bar{u}%
}(s))],\text{ }P-a.s..
\end{array}
\right.  \label{intr-time}%
\end{equation}

Comparing with the results in \cite{Nie-SW-1, Nie-SW-2}, the difficulties of
proving the above relations come from the fully coupled property of our
controlled system. Note that due to the fully coupled property, the MP in \cite{Hu-JX} includes an algebra equation which leads to the adjoint
process $(p,P)$ in \cite{Hu-JX-DPP} becomes much more complex than that in
\cite{Hu-2017},  and the value function $W$ in \cite{Hu-JX-DPP} should satisfy
the HJB equation combined with an algebra equation.  When we establish the
relation (\ref{intr-rel}), we need to perturb the initial state $x$ which
leads to the fully coupled variational equation (\ref{eq-xy-hat}). The key
step in obtaining (\ref{intr-rel}) is to estimate the remainder terms
$\varepsilon_{i}(\cdot)$ ($i=1,2,3$) in (\ref{eq-xy-hat}). But the approach in
\cite{Nie-SW-2} does not work. The reason is that for decoupled case, one can
first estimate the remainder terms of the forward equation, and then estimate
the remainder terms of the backward equation by standard estimates as in
\cite{Nie-SW-2}. But for the fully coupled case, $\hat{Z}$ will appear in the
remainder terms of the forward equation which yields that we can not estimate
the remainder terms in the forward equation firstly.
To overcome this difficulty, by utilizing the relationship between $(\hat
{Y}(\cdot),\hat{Z}(\cdot))$ and $\hat{X}(\cdot)$ (see (\ref{relation-hat}) and
(\ref{eq-epsilon2})), we propose a new decoupling technique and estimate the
remainder terms of the forward and backward equations simultaneously. Then, we
obtain the desirable estimates which make the establishment of the relation
(\ref{intr-rel}) possible. The idea to prove the relation (\ref{intr-time}) is similar.

When the value function $W$ is supposed to be smooth, we discover two novel
connections:\\
\noindent  (i) the relation between the algebra equation for the MP and the algebra equation for the HJB equation,
 i.e., $\Delta(\cdot)$ in (\ref{def-delt}) and $V(\cdot)$ in (\ref{eq-V}), for $s\in\lbrack t,T]$,
\begin{equation*}
\begin{array}
[c]{rl}
\Delta(s)=&V(s,\bar{X}^{t,x;\bar{u}}(s),W(s,\bar{X}^{t,x;\bar{u}}(s)),W_{x}(s,\bar
{X}^{t,x;\bar{u}}(s)),u)\\
&-V(s,\bar{X}^{t,x;\bar{u}}(s),W(s,\bar{X}^{t,x;\bar{u}}(s)),W_{x}(s,\bar
{X}^{t,x;\bar{u}}(s)),\bar{u}(s)),
\end{array}
\end{equation*}

\noindent  (ii)  the relation between the derivatives of the solution $V$ to the algebra equation
(\ref{eq-V}) and the terms $K_{1}(\cdot)$, $K_{2}(\cdot)$ in the adjoint
equations (\ref{eq-p}) and (\ref{eq-P}), for $s\in\lbrack t,T]$,%
\begin{equation}%
\begin{array}
[c]{rl}%
\left.  \frac{\partial V}{\partial x}(s,x,W(s,x),W_{x}(s,x),\bar{u}(s))\right\vert _{x=\bar
{X}^{t,x;\bar{u}}(s)} & =K_{1}(s),\\
\left.  \frac{\partial^{2}V}{\partial x^{2}}(s,x,W(s,x),W_{x}(s,x),\bar{u}(s))\right\vert
_{x=\bar{X}^{t,x;\bar{u}}(s)} & =\tilde{K}_{2}(s),
\end{array}
\label{connection-algebra}%
\end{equation}
where $K_{1}\left(  \cdot\right)  $ (resp. $\tilde{K}_{2}(\cdot)$) is defined
in (\ref{new-k1}) (resp. (\ref{K2-til})).\\
\noindent From the point of view of the MP,
$K_{1}\left(  \cdot\right)  $ is the coefficient of $X_{1}\left(
\cdot\right)  $ in the first-order variational equation of $Z\left(
\cdot\right)  $ in Lemma 3.13 in \cite{Hu-JX} and it measures the sensitivity
of the variable $Z(\cdot)$ to the variable $X(\cdot)$ under the optimal state.
From the viewpoint of the DPP and the HJB equation,
\[
V(s,\bar{X}^{t,x;\bar{u}}(s),W(s,\bar{X}^{t,x;\bar{u}}(s)),W_{x}(s,\bar
{X}^{t,x;\bar{u}}(s)),\bar{u}(s))=\bar{Z}^{t,x;\bar{u}}(s).
\]
Thus, the connection (\ref{connection-algebra}) is naturally established. In
fact, when $\sigma$ is independent of $y$ and $z$,%
\begin{align*}
&K_{1}\left(  s\right)   =p\left(  s\right)  \sigma_{x}\left(  s\right)
+q\left(  s\right)  ,\;s\in\lbrack t,T];\\
& \left.  \frac{\partial V}{\partial x}(s,x,W(s,x),W_{x}(s,x),\bar{u}(s))\right\vert _{x=\bar
{X}^{t,x;\bar{u}}(s)}\\
&  =W_{x}\left(  s,\bar
{X}^{t,x;\bar{u}}(s)\right)  \sigma_{x}\left(  s\right)  +W_{xx}\left(
s,\bar{X}^{t,x;\bar{u}}(s)\right)  \sigma\left(  s\right)  ,
\end{align*}
which can be directly deduced by the connection between the MP and the DPP for
decoupled FBSCSs. The connection between $\tilde{K}_{2}(\cdot)$ and $\frac{\partial^2 V}{\partial x^2}(\cdot)$
can be analyzed similarly.
Besides the smooth case, we also study other special cases. When the diffusion
term $\sigma$ of the forward stochastic differential equation in
(\ref{state-eq}) is linear in $z$, we relax the assumption that $q(\cdot)$ is
bounded. For the so called local case in which the control domain is convex
and compact, the relations in Theorem \ref{th-visco-x} are still hold under
our Assumptions \ref{assum-1}, \ref{assum-2} and \ref{assum-3} since our
control domain is only supposed to be a nonempty and compact set. Then, we
study the local case under the monotonicity conditions as in \cite{Wu98, Li-W}
and obtain the relationship between the MP in \cite{Wu98} and the DPP in
\cite{Li-W} for the fully coupled FBSCS.

The rest of the paper is organized as follows. In section 2, we give the
preliminaries. The connections between the value function and the adjoint
processes within the framework of viscosity solution are given in section 3.
In the last section, we study some special cases.

\section{ Preliminaries}

Let $T>0$ be fixed, and $U\subset\mathbb{R}^{k}$ be nonempty and compact.
Given $t\in\lbrack0,T)$, denote by $\mathcal{U}^{w}[t,T]$ the set of all
5-tuples $(\Omega,\mathcal{F},P,B(\cdot);u(\cdot))$ satisfying the following:

\begin{description}
\item[(i)] $(\Omega,\mathcal{F},P)$ is a complete probability space;

\item[(ii)] $B(r)=(B_{1}(r),B_{2}(r),...B_{d}(r))_{r\geq t}^{\intercal}$ is a
$d$-dimensional standard Brownian motion defined on $(\Omega,\mathcal{F}%
,P)$ with $B(t)=0$ a.s.. Set $\mathbb{F}^{t}:=\left\{  \mathcal{F}_{s}^{t}\right\}
_{s\geq t}$ and $\mathbb{F}=\mathbb{F}^{0}$, where $\mathcal{F}_{s}^{t}$ is the $P$-augmentation of the natural filtration of
$\sigma\{B(r):t\leq r\leq s\}$;

\item[(iii)] $u(\cdot):[t,T]\times\Omega\rightarrow U$ is an $\mathbb{F}^{t}%
$-adapted process on $(\Omega,\mathcal{F},P)$.
\end{description}

When there is no confusion, we also use $u(\cdot)\in\mathcal{U}^{w}[t,T]$.
Denote by $\mathbb{R}^{n}$ the $n$-dimensional real Euclidean space and
$\mathbb{R}^{k\times n}$ the set of $k\times n$ real matrices. Let
$\langle\cdot,\cdot\rangle$ (resp. $\Vert\cdot\Vert$) denote the usual scalar
product (resp. usual norm) of $\mathbb{R}^{n}$ and $\mathbb{R}^{k\times n}$.
The scalar product (resp. norm) of $M=(m_{ij})$, $N=(n_{ij})\in\mathbb{R}%
^{k\times n}$ is denoted by $\langle M,N\rangle=\mathrm{tr}\{MN^{\intercal}\}$
(resp. $\Vert M\Vert=\sqrt{MM^{\intercal}}$), where the superscript
$^{\intercal}$ denotes the transpose of vectors or matrices.

For each given $p\geq1$, we introduce the following spaces.

\noindent  $L^{p}(\mathcal{F}_{T}^{t};\mathbb{R}^{n})$ : the space of $\mathcal{F}%
_{T}^{t}$-measurable $\mathbb{R}^{n}$-valued random vectors $\eta$ such that
\[
||\eta||_{p}:=(\mathbb{E}[|\eta|^{p}])^{\frac{1}{p}}<\infty,
\]

\noindent  $L^{\infty}(\mathcal{F}_{T}^{t};\mathbb{R}^{n})$: the space of $\mathcal{F}%
_{T}^{t}$-measurable $\mathbb{R}^{n}$-valued random vectors $\eta$ such that $$||\eta
||_{\infty}=\mathrm{ess~sup}_{\omega\in\Omega
}|\eta(\omega)|<\infty,$$

\noindent  $L_{\mathbb{F}^{t}}^{p}(t,T;\mathbb{R}^{n})$: the space of $\mathbb{F}^{t}%
$-adapted $\mathbb{R}^{n}$-valued stochastic processes on $[t,T]$ such that
\[
\mathbb{E}[\int_{t}^{T}|f(r)|^{p}dr]<\infty,
\]

\noindent  $L_{\mathbb{F}^{t}}^{\infty}(t,T;\mathbb{R}^{n})$: the space of $\mathbb{F}%
^{t}$-adapted $\mathbb{R}^{n}$-valued stochastic processes on $[t,T]$ such that
\[
||f(\cdot)||_{\infty}=\mathrm{ess~sup}_{(r,\omega)\in\lbrack t,T]\times\Omega
}|f(r,\omega)|<\infty,
\]

\noindent  $L_{\mathbb{F}^{t}}^{p,q}(t,T;\mathbb{R}^{n})$: the space of $\mathbb{F}^{t}%
$-adapted $\mathbb{R}^{n}$-valued stochastic processes on $[t,T]$ such that
\[
||f(\cdot)||_{p,q}=\{\mathbb{E}[(\int_{t}^{T}|f(r)|^{p}dr)^{\frac{q}{p}%
}]\}^{\frac{1}{q}}<\infty,
\]

\noindent $L_{\mathbb{F}^{t}}^{p}(\Omega;C([t,T],\mathbb{R}^{n}))$: the space of
$\mathbb{F}^{t}$-adapted $\mathbb{R}^{n}$-valued continuous stochastic
processes on $[t,T]$ such that
\[
\mathbb{E}[\sup\limits_{t\leq r\leq T}|f(r)|^{p}]<\infty.
\]

To simplify the presentation, we only consider the case $d=1$. The results for
$d>1$ are similar. For each fixed $(t,x)\in\lbrack0,T]\times\mathbb{R}$
and\ $u(\cdot)\in\mathcal{U}^{w}[t,T]$, consider the following controlled
fully coupled FBSDE: for $s\in\lbrack t,T]$,
\begin{equation}
\left\{
\begin{array}
[c]{rl}%
dX^{t,x;u}(s)= & b(s,X^{t,x;u}(s),Y^{t,x;u}(s),Z^{t,x;u}(s),u(s))ds\\
&+\sigma(s,X^{t,x;u}(s),Y^{t,x;u}(s),Z^{t,x;u}(s),u(s))dB(s),\\
dY^{t,x;u}(s)= & -g(s,X^{t,x;u}(s),Y^{t,x;u}(s),Z^{t,x;u}(s),u(s))ds+Z^{t,x;u}%
(s)dB(s),\\
X^{t,x;u}(t)= & x,\ Y^{t,x;u}(T)=\phi(X^{t,x;u}(T)),
\end{array}
\right.  \label{state-eq}%
\end{equation}
where%
\[
b:[t,T]\times\mathbb{R}\times\mathbb{R}\times\mathbb{R}\times U\rightarrow
\mathbb{R},
\]%
\[
\sigma:[t,T]\times\mathbb{R}\times\mathbb{R}\times\mathbb{R}\times
U\rightarrow\mathbb{R},
\]%
\[
g:[t,T]\times\mathbb{R}\times\mathbb{R}\times\mathbb{R}\times U\rightarrow
\mathbb{R},
\]%
\[
\phi:\mathbb{R}\rightarrow\mathbb{R}.
\]

\begin{assumption}
\label{assum-1} (i) $b,\sigma,g,\phi$ are continuous with respect to
$s,x,y,z,u$, and there exist constants $L_{i}>0$, $i=1,2,3$ such that%
\[
|b(s,x_{1},y_{1},z_{1},u)-b(s,x_{2},y_{2},z_{2},u)|\leq L_{1}|x_{1}%
-x_{2}|+L_{2}(|y_{1}-y_{2}|+|z_{1}-z_{2}|),
\]%
\[
|\sigma(s,x_{1},y_{1},z_{1},u)-\sigma(s,x_{2},y_{2},z_{2},u)|\leq L_{1}%
|x_{1}-x_{2}|+L_{2}|y_{1}-y_{2}|+L_{3}|z_{1}-z_{2}|,
\]%
\[
|g(s,x_{1},y_{1},z_{1},u)-g(s,x_{2},y_{2},z_{2},u)|\leq L_{1}(|x_{1}%
-x_{2}|+|y_{1}-y_{2}|+|z_{1}-z_{2}|),
\]%
\[
|\phi(x_{1})-\phi(x_{2})|\leq L_{1}|x_{1}-x_{2}|,
\]
for all $s\in\lbrack0,T],x_{i},y_{i},z_{i}\in\mathbb{R},$ $i=1,2$, $u\in U$.
\end{assumption}

(ii) For any $2\leq\beta\leq8$,\ $\Lambda_{\beta}:=C_{\beta}2^{\beta
+1}(1+T^{\beta})c_{1}^{\beta}<1$, where $c_{1}=\max\{L_{2,}L_{3}\}$,
$C_{\beta}$ is defined in Lemma 5.1 in \cite{Hu-JX}.

\begin{remark}
Since $U$ is compact, from the above assumption (i) we obtain that%
\[
|\psi(s,x,y,z,u)|\leq L(1+|x|+|y|+|z|),
\]
where $L>0$ is a constant and $\psi=b,$ $\sigma,$ $g$ and $\phi$.
\end{remark}

\begin{remark}
Note that $\beta=2$ is sufficient to guarantee the DPP. But, for the MP we
need $2\leq\beta\leq8$.
\end{remark}

Given $u(\cdot)\in\mathcal{U}^{w}[t,T]$, by Theorem 2.2 in \cite{Hu-JX}, the
equation \eqref{state-eq} has a unique solution $(X^{t,x;u}(\cdot
),Y^{t,x;u}(\cdot)$, $Z^{t,x;u}(\cdot))$ $\in L_{\mathbb{F}^{t}}^{\beta
}(\Omega;C([t,T],\mathbb{R}))\times L_{\mathbb{F}^{t}}^{\beta}(\Omega
;C([t,T],\mathbb{R}))\times L_{\mathbb{F}^{t}}^{2,\beta}(t,T;\mathbb{R})$ for $\beta\in [2,8]$.
For the existence and uniqueness of solutions of FBSDEs, the readers may refer
 to (\cite{Hu-Peng95,Ma-WZZ,Ma-Y,Pardoux-Tang}).
For each given $(t,x)\in\lbrack0,$ $T]\times\mathbb{R}$, define the cost
functional
\begin{equation}
J(t,x;u(\cdot))=Y^{t,x;u}(t).
\end{equation}

\begin{remark}
Since the coefficients are deterministic and $u(\cdot)$ is an $\mathbb{F}^{t}%
$-adapted process, $J(t,x;u(\cdot))$ is deterministic.
\end{remark}

For each given $(t,x)\in\lbrack0,T]\times\mathbb{R}$, define the value
function
\begin{equation}%
\begin{array}
[c]{ll}%
W(t,x)= & \underset{u(\cdot)\in\mathcal{U}^{w}[t,T]}{\inf}J(t,x;u(\cdot)).
\end{array}
\label{obje-eq}%
\end{equation}

We introduce the following generalized HJB equation combined with an algebra
equation for $W(\cdot,\cdot)$:
\begin{equation}
\left\{
\begin{array}
[c]{l}%
W_{t}(t,x)+\inf\limits_{u\in U}\{G(t,x,W(t,x),W_{x}(t,x),W_{xx}\left(
t,x\right)  ,u)\}=0,\\
W(T,x)=\phi(x),
\end{array}
\right.  \label{eq-hjb}%
\end{equation}
where
\begin{equation}
\begin{array}
[c]{l}%
G(t,x,v,p,A,u)\\
=pb(t,x,v,V\left(  t,x,v,p,u\right)  ,u)+\frac{1}{2}A(\sigma(t,x,v,V\left(
t,x,v,p,u\right)  ,u))^{2}\\
\ \ \ +g(t,x,v,V\left(  t,x,v,p,u\right)  ,u),\\
V(t,x,v,p,u)=p\sigma(t,x,v,V\left(  t,x,v,p,u\right)  ,u),\\
 \forall (t,x,v,p,A,u)\in\lbrack0,T]\times\mathbb{R}\times\mathbb{R}\times
\mathbb{R}\times\mathbb{R\times}U.
\end{array} \label{eq-V}
\end{equation}

Now, we introduce the following definition of viscosity solution (see
\cite{Crandall-lecture}).

\begin{definition}
(i) A real-valued continuous function $W(\cdot,\cdot)\in C\left(
[0,T]\times\mathbb{R}\right)  $ is called a viscosity subsolution (resp.
supersolution) of (\ref{eq-hjb}) if $W(T,x)\leq\phi(x)$ (resp. $W(T,x)\geq
\phi(x))$ for all $x\in\mathbb{R}$ and if for all $f\in C_{b}^{2,3}\left(
[0,T]\times\mathbb{R}\right)  $ such that $W(t,x)=f(t,x)$ and $W-f$ attains a
local maximum (resp. minimum) at $(t,x)\in\lbrack0,T)\times\mathbb{R}$, we have%
\[
\left\{
\begin{array}
[c]{l}%
f_{t}(t,x)+\inf\limits_{u\in U}\{G(t,x,f(t,x),f_{x}(t,x),f_{xx}\left(
t,x\right)  ,u)\}\geq0\\
(\text{resp. }f_{t}(t,x)+\inf\limits_{u\in U}\{G(t,x,f(t,x),f_{x}%
(t,x),f_{xx}\left(  t,x\right)  ,u)\}\leq0).
\end{array}
\right.
\]

(ii) A real-valued continuous function $W(\cdot,\cdot)\in C\left(
[0,T]\times\mathbb{R}\right)  $ is called a viscosity solution to
(\ref{eq-hjb}), if it is both a viscosity subsolution and viscosity supersolution.
\end{definition}

\begin{remark}
The viscosity solution to (\ref{eq-hjb}) can be equivalently defined by
sub-jets and super-jets (see \cite{Crandall-lecture}).
\end{remark}

\begin{proposition} (see \cite{Hu-JX-DPP})
Let Assumption \ref{assum-1} hold. Then, for each $t\in\lbrack0,T]$ and
$x,x^{\prime}\in\mathbb{R}$,%
\[
|W(t,x)-W(t,x^{\prime})|\leq C|x-x^{\prime}|\text{ and }|W(t,x)|\leq
C(1+|x|),
\]
where $C>0$ depends on $L_{1}$, $L_{2}$, $L_{3}$ and $T$. Moreover, $W(\cdot,\cdot)$ satisfies the DPP. If
$L_{3}$ is small enough, then $W(\cdot,\cdot)$ is the
viscosity solution to \eqref{eq-hjb}.
\end{proposition}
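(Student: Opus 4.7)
The plan is to handle the three assertions in turn, with the main effort concentrated in the viscosity-solution part.

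\textbf{Lipschitz continuity and linear growth.} I would fix $u(\cdot)\in\mathcal{U}^{w}[t,T]$ and compare the solutions of (\ref{state-eq}) from initial data $x$ and $x'$. Under Assumption~\ref{assum-1}(ii) (in particular $\Lambda_{2}<1$), the contraction-type argument behind Theorem~2.2 in \cite{Hu-JX} gives the stability bound
\[
\mathbb{E}\Bigl[\sup_{s\in[t,T]}|X^{t,x;u}(s)-X^{t,x';u}(s)|^{2}+\sup_{s\in[t,T]}|Y^{t,x;u}(s)-Y^{t,x';u}(s)|^{2}+\int_{t}^{T}|Z^{t,x;u}(s)-Z^{t,x';u}(s)|^{2}ds\Bigr]\leq C|x-x'|^{2},
\]
with $C=C(L_{1},L_{2},L_{3},T)$. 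Since $J(t,x;u(\cdot))=Y^{t,x;u}(t)$ is deterministic, this yields $|J(t,x;u(\cdot))-J(t,x';u(\cdot))|\leq C|x-x'|$ uniformly in $u(\cdot)$; taking the infimum gives Lipschitz continuity of $W$ in $x$. The linear-growth bound $|W(t,x)|\leq C(1+|x|)$ follows by applying the same device between $x$ and $0$ and invoking the linear-growth bound $|\psi|\leq L(1+|x|+|y|+|z|)$ from the Remark after Assumption~\ref{assum-1}.

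\textbf{DPP.} I would use the standard two-sided concatenation argument. For $s\in[t,T]$ and $u(\cdot)\in\mathcal{U}^{w}[t,T]$, uniqueness of the fully coupled FBSDE combined with the $\mathbb{F}^{t}$-adaptedness of controls yields a Markovian flow property: the restriction of $(X^{t,x;u},Y^{t,x;u},Z^{t,x;u})$ to $[s,T]$ coincides with the solution starting from $(s,X^{t,x;u}(s))$ driven by the restriction of $u$. This lets one rewrite $Y^{t,x;u}(t)$ as the value at time $t$ of a BSDE on $[t,s]$ whose terminal datum is $W(s,X^{t,x;u}(s))$ after an $\varepsilon$-optimal continuation on $[s,T]$ is selected measurably; BSDE comparison then gives one direction of the DPP, and choosing $\varepsilon$-optimal pieces on both $[t,s]$ and $[s,T]$ provides the reverse. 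This is exactly the route carried out in \cite{Hu-JX-DPP}.

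\textbf{Viscosity solution under $L_{3}$ small.} The preparatory observation is unique solvability of the algebra equation $V=p\sigma(t,x,v,V,u)$: for $|p|\leq C$, where $C$ is the Lipschitz constant of $W$ from the first step, the map $V\mapsto p\sigma(t,x,v,V,u)$ is a strict contraction in $V$ as soon as $CL_{3}<1$, and the resulting $V$ is jointly continuous. For the subsolution property, let $f\in C^{2,3}_{b}$ with $W(t,x)=f(t,x)$ and $W-f$ attaining a local maximum at $(t,x)\in[0,T)\times\mathbb{R}$. For any constant $u\in U$ and small $h>0$, the DPP gives $W(t,x)\leq \mathcal{G}^{t,x;u}_{t,t+h}[W(t+h,X^{t,x;u}(t+h))]\leq \mathcal{G}^{t,x;u}_{t,t+h}[f(t+h,X^{t,x;u}(t+h))]$ by BSDE comparison, where $\mathcal{G}^{t,x;u}_{t,t+h}[\xi]$ denotes the value at time $t$ of the BSDE on $[t,t+h]$ with terminal $\xi$ driven by $g$. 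Applying It\^o's formula to $f(s,X^{t,x;u}(s))$ and subtracting, the implicit $Z$-component of the resulting BSDE satisfies the feedback $\tilde Z(s)=f_{x}(s,X^{t,x;u}(s))\sigma(s,X^{t,x;u}(s),f,\tilde Z(s),u)$, which by the contraction coincides with $V$ evaluated at $(s,X^{t,x;u}(s),f,f_{x},u)$. Dividing by $h$, letting $h\downarrow 0$ by continuity, and taking the infimum over $u\in U$ produces the subsolution inequality; the supersolution direction is symmetric, using $\varepsilon$-optimal controls from the DPP in place of a constant $u$. The main obstacle is precisely this identification of the implicit $Z$ with the algebra-equation solution $V$, together with uniform control of the error $W(t+h,\cdot)-f(t+h,\cdot)\leq 0$ as $h\downarrow 0$; the smallness of $L_{3}$ is what keeps both the contraction for $V$ and the FBSDE stability estimates of the first step uniform enough for this passage to go through.
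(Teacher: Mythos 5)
The paper does not actually prove this proposition; it is imported verbatim from \cite{Hu-JX-DPP}, so there is no in-paper proof to compare against. Your outline follows the architecture one would expect (and that \cite{Li-W, Hu-JX-DPP} use): FBSDE stability for the Lipschitz and growth bounds, concatenation plus measurable selection of $\varepsilon$-optimal controls for the DPP, and the DPP plus It\^{o}'s formula on a test function for the viscosity property. The first part is fine as written.

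There is, however, one genuine gap running through the second and third parts: you repeatedly invoke ``BSDE comparison'' for the backward semigroup $\mathcal{G}^{t,x;u}_{t,t+h}[\cdot]$, but for a \emph{fully coupled} FBSDE this operator is not the solution map of a BSDE. The terminal datum $W(s,X^{t,x;u}(s))$ (or $f(t+h,X^{t,x;u}(t+h))$) is a function of the forward state, and the forward equation on $[t,s]$ itself depends on $(Y,Z)$, so $\mathcal{G}$ is defined by re-solving a coupled system with a merely Lipschitz terminal function; its well-posedness already requires the weak-coupling condition $\Lambda_{\beta}<1$, and its monotonicity in the terminal function --- which you need both to close the DPP and to pass from $\mathcal{G}[W(t+h,\cdot)]$ to $\mathcal{G}[f(t+h,\cdot)]$ --- is known to fail for general fully coupled FBSDEs. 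Establishing this comparison (under smallness of $L_{3}$, or under the monotonicity conditions of \cite{Wu98, Li-W}) is one of the central technical points of \cite{Hu-JX-DPP} and cannot be treated as the standard BSDE comparison theorem. Two smaller omissions: $W-f$ attains only a \emph{local} maximum, so the inequality $W(t+h,\cdot)\leq f(t+h,\cdot)$ holds only on a neighborhood and a localization (stopping-time or modification of $f$) argument is needed before $X^{t,x;u}(t+h)$ can be inserted; and the identification of the implicit $Z$-component with $V(\cdot,\cdot,f,f_{x},u)$ in the limit $h\downarrow0$ requires a quantitative estimate of the form $h^{-1}\mathbb{E}\int_{t}^{t+h}|\tilde{Z}(s)-V(t,x,f(t,x),f_{x}(t,x),u)|^{2}ds\rightarrow0$, not just continuity. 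The overall plan is the right one, but these are precisely the points where the fully coupled structure makes the argument nontrivial.
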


Let $\bar{u}(\cdot)\in\mathcal{U}^{w}[t,T]$ be optimal. Then,
$W(t,x)=J(t,x;\bar{u}(\cdot))$. The corresponding solution $(\bar{X}%
^{t,x;\bar{u}}(\cdot),\bar{Y}^{t,x;\bar{u}}(\cdot)$, $\bar{Z}^{t,x;\bar{u}%
}(\cdot))$ to equation \eqref{state-eq} is called optimal trajectory. To
derive the MP, we give the following assumptions.

\begin{assumption}
\label{assum-2} For $\psi=b,$ $\sigma,$ $g$ and $\phi$, we suppose

(i) $\psi_{x}$, $\psi_{y}$, $\psi_{z}$ are bounded and continuous in
$(x,y,z,u)$; there exists a constant $\ L>0$ such that
\[
|\sigma(t,0,0,z,u)-\sigma(t,0,0,z,u^{\prime})|\leq L(1+|u|+|u^{\prime}|).
\]

(ii) $\psi_{xx}$, $\psi_{xy}$, $\psi_{yy}$ , $\psi_{xz}$, $\psi_{yz}$,
$\psi_{zz}$ are bounded and continuous in $(x,y,z,u)$.
\end{assumption}

\begin{remark}
It is clear that $L_{1}$ in Assumption \ref{assum-1} is $\max\{||b_{x}%
||_{\infty},||\sigma_{x}||_{\infty},||g_{x}||_{\infty},$ $||g_{y}||_{\infty
},||g_{z}||_{\infty},||\phi_{x}||_{\infty}\}$, $L_{2}=\max\{||b_{y}%
||_{\infty},||b_{z}||_{\infty},||\sigma_{y}||_{\infty}\}$ and $L_{3}%
=||\sigma_{z}||_{\infty}$.
\end{remark}

For $\beta_{0}>0$, set
\[
F(y)=L_{1}+\left(  L_{2}+L_{1}+\beta_{0}^{-1}L_{1}L_{2}\right)  |y|+\left[
L_{2}+\beta_{0}^{-1}(L_{1}L_{2}+L_{2}^{2})\right]  y^{2}+\beta_{0}^{-1}%
L_{2}^{2}|y|^{3},\ y\in\mathbb{R}\text{.}%
\]
Let $s(\cdot)$ be the maximal solution to the following ordinary differential
equation (ODE):%
\begin{equation}
s(t)=L_{1}+\int_{t}^{T}F(s(r))dr,\;t\in\lbrack0,T].\label{u-ode}%
\end{equation}
Let $l(\cdot)$ be the minimal solution to the following ODE:
\begin{equation}
l(t)=-L_{1}-\int_{t}^{T}F(l(r))dr,\;t\in\lbrack0,T].\label{l-ode}%
\end{equation}
Moreover, set
\begin{equation}
t_{1}=T-\int_{-\infty}^{-L_{1}}\frac{1}{F(y)}dy,\ \ t_{2}=T-\int_{L_{1}%
}^{\infty}\frac{1}{F(y)}dy,\ \ t^{\ast}=t_{1}\vee t_{2}.\label{def-t}%
\end{equation}

\begin{assumption}
\label{assum-3} There exists a positive constant $\beta_{0}\in(0,1)$ such
that
\[
t^{\ast}<0
\]
and%
\begin{equation}
\lbrack s(0)\vee(-l(0))]L_{3}\leq1-\beta_{0}. \label{assum-cl}%
\end{equation}

\end{assumption}

\begin{remark}
When $L_{2}$ and $L_{3}$ are small enough, we derive that Assumption
\ref{assum-3} holds in \cite{Hu-JX}.
\end{remark}

We introduce the following notations: for $\psi=b,\sigma,g$ and
$\kappa=x,y,z$,%
\begin{equation}\label{new-nwe-1}%
\begin{array}
[c]{cl}%
\psi(s)= & \psi(s,\bar{X}^{t,x;\bar{u}}(s),\bar{Y}^{t,x;\bar{u}}(s),\bar
{Z}^{t,x;\bar{u}}(s),\bar{u}(s)),\\
\psi_{\kappa}(s)= & \psi_{\kappa}(s,\bar{X}^{t,x;\bar{u}}(s),\bar{Y}%
^{t,x;\bar{u}}(s),\bar{Z}^{t,x;\bar{u}}(s),\bar{u}(s)),\\
D\psi(s)= & D\psi(s,\bar{X}^{t,x;\bar{u}}(s),\bar{Y}^{t,x;\bar{u}}(s),\bar
{Z}^{t,x;\bar{u}}(s),\bar{u}(s)),\\
D^{2}\psi(s)= & D^{2}\psi(s,\bar{X}^{t,x;\bar{u}}(s),\bar{Y}^{t,x;\bar{u}%
}(s),\bar{Z}^{t,x;\bar{u}}(s),\bar{u}(s)),
\end{array}
\end{equation}
where $D\psi$ is the gradient of $\psi$ with respect to $x,y,z,$ and
$D^{2}\psi$ is the Hessian matrix of $\psi$ with respect to $x,y,z$.

The first-order adjoint equation is
\begin{equation}
\left\{
\begin{array}
[c]{rl}%
dp(s)= & -\left\{  g_{x}(s)+g_{y}(s)p(s)+g_{z}(s)K_{1}(s)+b_{x}(s)p(s)+b_{y}%
(s)p^{2}(s)\right. \\
& \left.  +b_{z}(s)K_{1}(s)p(s)+\sigma_{x}(s)q(s)+\sigma_{y}(s)p(s)q(s)+\sigma
_{z}(s)K_{1}(s)q(s)\right\}  ds\\
&+q(s)dB(s),\\
p(T)= & \phi_{x}(\bar{X}(T)),
\end{array}
\right.  \label{eq-p}%
\end{equation}
where
\begin{equation}
K_{1}(s)=(1-p(s)\sigma_{z}(s))^{-1}\left[  \sigma_{x}(s)p(s)+\sigma
_{y}(s)p^{2}(s)+q(s)\right]  . \label{new-k1}%
\end{equation}
The second-order adjoint equation is
\begin{equation}
\left\{
\begin{array}
[c]{rl}%
-dP(s)= & \left\{  P(s)\left[  (D\sigma(s)^{\intercal}\left(  1,p(s),K_{1}(s)\right)
^{\intercal})^{2}+2Db(s)^{\intercal}\left(  1,p(s),K_{1}(s)\right)  ^{\intercal
}\right.  \right.  \\
& \left.  +H_{y}(s)\right]  +2Q(s)D\sigma(s)^{\intercal}\left(  1,p(s),K_{1}%
(s)\right)  ^{\intercal}\\
& +\left(  1,p(s),K_{1}(s)\right)  D^{2}H(s)\left(  1,p(s),K_{1}(s)\right)
^{\intercal}\left.  +H_{z}(s)K_{2}(s)\right\}  ds\\
& -Q(s)dB(s),\\
P(T)= & \phi_{xx}(\bar{X}(T)),
\end{array}
\right.  \label{eq-P}%
\end{equation}
where
\[%
\begin{array}
[c]{ll}%
H(s,x,y,z,u,p,q)= & g(s,x,y,z,u)+pb(s,x,y,z,u)+q\sigma(s,x,y,z,u),
\end{array}
\]%
\begin{equation}%
\begin{array}
[c]{ll}%
&K_{2}(s)\\
&= (1-p(s)\sigma_{z}(s))^{-1}\left\{  p(s)\sigma_{y}(s)+2\left[
\sigma_{x}(s)+\sigma_{y}(s)p(s)+\sigma_{z}(s)K_{1}(s)\right]  \right\}  P(s)\\
&\ \  +(1-p(s)\sigma_{z}(s))^{-1}\left\{  Q(s)+p(s)\left(  1,p(s),K_{1}(s)\right)
D^{2}\sigma(s)\left(  1,p(s),K_{1}(s)\right)  ^{\intercal}\right\},
\end{array}
\label{new-k2}%
\end{equation}
$H(s)$, $DH(s)$ and $D^2H(s)$ are defined similarly in (\ref{new-nwe-1}).

Define
\begin{equation}%
\begin{array}
[c]{ll}%
&\mathcal{H}(s,x,y,z,u,p,q,P)\\
&= pb(s,x,y,z+\Delta(s),u)+q\sigma(s,x,y,z+\Delta
(s),u) +g(s,x,y,z+\Delta(s),u)\\
&\ \ +\frac{1}{2}P(\sigma(s,x,y,z+\Delta(s),u)-\sigma(s,\bar{x}(s),\bar
{y}(s),\bar{z}(s),\bar{u}(s)))^{2}\,
\end{array}
\label{def-H}%
\end{equation}
where $\Delta(s)$ is the solution to the following algebra equation
\begin{equation}
\begin{array}[c]{rl}
\Delta(s)=&p(s)\left[\sigma(s,\bar{X}^{t,x;\bar{u}}(s),\bar{Y}^{t,x;\bar{u}}%
(s),\bar{Z}^{t,x;\bar{u}}(s)+\Delta(s),u)\right.\\
&\left.-\sigma(s,\bar{X}^{t,x;\bar{u}%
}(s),\bar{Y}^{t,x;\bar{u}}(s),\bar{Z}^{t,x;\bar{u}}(s),\bar{u}(s))\right],\;s\in
\lbrack t,T].
\end{array} \label{def-delt}%
\end{equation}
Then, we have the following maximum principle.

\begin{theorem}
(See Theorem 3.18 in \cite{Hu-JX}) \label{Th-MP}Suppose that Assumptions \ref{assum-1},
\ref{assum-2} and \ref{assum-3} hold, and $q(\cdot)$ in (\ref{eq-p}) is
bounded. Then the following stochastic maximum principle holds:
\begin{equation}%
\begin{array}
[c]{l}%
\mathcal{H}(s,\bar{X}^{t,x;\bar{u}}(s),\bar{Y}^{t,x;\bar{u}}(s),\bar
{Z}^{t,x;\bar{u}}(s),u,p(s),q(s),P(s))\\
\geq\mathcal{H}(s,\bar{X}^{t,x;\bar{u}}(s),\bar{Y}^{t,x;\bar{u}}(s),\bar
{Z}^{t,x;\bar{u}}(s),\bar{u}(s),p(s),q(s),P(s)),\ \ \ \forall u\in
U\ a.e.,\ a.s..
\end{array}
\label{mp-1}%
\end{equation}

\end{theorem}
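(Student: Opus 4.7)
The plan is to follow the global stochastic maximum principle framework via spike variation, adapted to the fully coupled forward-backward setting. Let $\bar{u}(\cdot)$ be optimal and fix $v\in U$. For $\varepsilon>0$ pick a measurable $E_{\varepsilon}\subset[t,T]$ of Lebesgue measure $\varepsilon$ and set $u^{\varepsilon}(s)=v\,\mathbf{1}_{E_{\varepsilon}}(s)+\bar{u}(s)\,\mathbf{1}_{E_{\varepsilon}^{c}}(s)$. Let $(X^{\varepsilon},Y^{\varepsilon},Z^{\varepsilon})$ be the corresponding solution of \eqref{state-eq}. Optimality gives $J(t,x;u^{\varepsilon})-J(t,x;\bar{u})=Y^{\varepsilon}(t)-\bar{Y}^{t,x;\bar{u}}(t)\geq 0$, and the goal is to expand the left-hand side in powers of $\varepsilon^{1/2}$ and extract \eqref{mp-1} from the resulting inequality via a Lebesgue point argument.

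First I would write $X^{\varepsilon}-\bar{X}=X_{1}+X_{2}+o(\varepsilon)$ with $X_{1}=O(\sqrt{\varepsilon})$, $X_{2}=O(\varepsilon)$, and analogously for $Y$ and $Z$, and derive the coupled linear first- and second-order variational FBSDEs. The crucial departure from the decoupled case concerns the spike contribution in $Z$: because $\sigma$ depends on $Z$, the effective jump of $\bar{Z}$ on $E_{\varepsilon}$ is not explicit but must be identified through a fixed-point condition, which is precisely the algebra equation \eqref{def-delt} defining $\Delta(s)$. Solvability of \eqref{def-delt} on $[t,T]$ together with the bound $\|1-p\sigma_{z}\|_{\infty}^{-1}<\infty$ needed below is ensured by Assumption \ref{assum-3}, whose ODE bounds $s(\cdot),l(\cdot)$ quantitatively control $\|p\|_{\infty}\|\sigma_{z}\|_{\infty}$.

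Next I would estimate the remainders $\varepsilon_{i}(\cdot)$ coming from the Taylor expansion of $b,\sigma,g,\phi$. This is the main obstacle, because $\hat{Z}$ already appears in the forward remainders and one cannot first estimate the forward equation and then the backward one, as in \cite{Nie-SW-2}. Here I would invoke the decoupling technique announced in the introduction: use the relation between $(\hat{Y},\hat{Z})$ and $\hat{X}$ implicit in the backward equation to feed a self-bound into the forward equation, then apply the $L^{\beta}$ fully coupled FBSDE estimates of Lemma 5.1 in \cite{Hu-JX} (whose contraction constant $\Lambda_{\beta}<1$ is exactly Assumption \ref{assum-1}(ii)). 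This simultaneous treatment yields $\|X_{1}\|_{\beta}=O(\sqrt{\varepsilon})$, $\|X_{2}\|_{\beta}=O(\varepsilon)$ and $\|\varepsilon_{i}\|_{\beta}=o(\varepsilon)$ for $\beta$ up to $8$, which is exactly why the range $2\leq\beta\leq 8$ is required.

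Finally I would introduce the first- and second-order adjoint processes $(p,q)$ and $(P,Q)$ defined by \eqref{eq-p} and \eqref{eq-P}. The coefficients $K_{1},K_{2}$ in \eqref{new-k1} and \eqref{new-k2} are not chosen freely: applying It\^o's formula to $p\cdot X_{1}$, $p\cdot X_{2}$ and $P\cdot X_{1}^{2}$ and taking expectations, the state-variation terms must cancel, and this cancellation forces the appearance of the factor $(1-p\sigma_{z})^{-1}$, which is well defined by Assumption \ref{assum-3}. The assumption that $q(\cdot)$ is bounded is then what makes $K_{1},K_{2}$ bounded and guarantees well-posedness of \eqref{eq-P}. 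Assembling the duality identities, the quadratic $\sigma$-terms reorganize into $\tfrac{1}{2}P(\sigma(\cdot,v)-\bar{\sigma})^{2}$ with the argument of $\sigma(\cdot,v)$ shifted by $\Delta(s)$ because of the algebra equation, producing precisely the Hamiltonian $\mathcal{H}$ in \eqref{def-H}. One arrives at
\begin{equation*}
0\leq\mathbb{E}\!\int_{t}^{T}\!\bigl[\mathcal{H}(s,\bar{X},\bar{Y},\bar{Z},v,p,q,P)-\mathcal{H}(s,\bar{X},\bar{Y},\bar{Z},\bar{u}(s),p,q,P)\bigr]\mathbf{1}_{E_{\varepsilon}}(s)\,ds+o(\varepsilon),
\end{equation*}
dividing by $\varepsilon$ and varying $E_{\varepsilon}$ through a Lebesgue density argument yields \eqref{mp-1}.
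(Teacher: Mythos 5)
This theorem is not proved in the paper at all: it is imported verbatim from Theorem 3.18 of \cite{Hu-JX}, and the body of the paper contains only the citation. So there is no internal proof to compare against; the relevant benchmark is the argument in that reference.

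Your outline reproduces the strategy of \cite{Hu-JX} faithfully at the level of ideas: spike variation, the observation that the variation of $Z$ acquires an $O(1)$ jump on the spike set which must be determined by the fixed-point (algebra) equation (\ref{def-delt}), the decoupling ansatz expressing $(\hat{Y},\hat{Z})$ through $\hat{X}$ via $p$ and $K_{1}$ so that the fully coupled remainders can be estimated simultaneously under the contraction condition $\Lambda_{\beta}<1$, the role of Assumption \ref{assum-3} in keeping $(1-p\sigma_{z})^{-1}$ bounded by $\beta_{0}^{-1}$, the role of the boundedness of $q$ in making $K_{1},K_{2}$ and hence (\ref{eq-P}) well posed, and the reorganization of the quadratic terms into $\tfrac12 P(\sigma(\cdot,z+\Delta,u)-\bar{\sigma})^{2}$, which is exactly why $\mathcal{H}$ in (\ref{def-H}) is evaluated at the shifted argument. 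These are the right key points and none of them is misstated. That said, what you have written is a roadmap, not a proof: every genuinely hard step --- the solvability and measurability of $\Delta(\cdot)$, the $L^{\beta}$ estimates for the coupled variational equations for $\beta$ up to $8$, the verification that the It\^{o} computations with $K_{1},K_{2}$ as in (\ref{new-k1}) and (\ref{new-k2}) actually close the duality identity, and the passage from the integrated inequality over $E_{\varepsilon}$ to the pointwise statement for all $u\in U$ (which needs a countable dense subset of $U$ and $\mathcal{F}_{s}$-measurable perturbations, not a single deterministic $v$) --- is asserted rather than carried out. As a reconstruction of the cited proof's architecture it is accurate; as a self-contained proof it is incomplete.
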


\begin{remark}
In the above theorem, if $\sigma(t,x,y,z,u)=A(t)z+\sigma_1(t,x,y,u)$, then we do not need
the assumption that $q(\cdot)$ is bounded.
\end{remark}

\section{Main results}
In the following, the constant $C>0$ will change from line to line for simplicity.
\subsection{Differentials in spatial variable}

In this subsection, we investigate the relationship between the MP and the DPP. We
first recall the notions of second-order super- and sub-jets in the spatial
variable $x$. For $w\in C([0,T]\times\mathbb{R})$ and $(t,\hat{x})\in
\lbrack0,T]\times\mathbb{R}$, define
\[
\left\{
\begin{array}
[c]{rll}%
D_{x}^{2,+}w(t,\hat{x}) & := & \{(p,P)\in\mathbb{R}\times\mathbb{R}:w(t,x)\leq
w\left(  t,\hat{x}\right)  +\left\langle p,x-\hat{x}\right\rangle \\
&  & +\frac{1}{2}(x-\hat{x})P(x-\hat{x})+o\left(  \left\vert x-\hat
{x}\right\vert ^{2}\right)  ,\text{ as }x\rightarrow\hat{x}\},\\
D_{x}^{2,-}w(t,\hat{x}) & := & \{(p,P)\in\mathbb{R}\times\mathbb{R}:w(t,x)\geq
w\left(  t,\hat{x}\right)  +\left\langle p,x-\hat{x}\right\rangle \\
&  & +\frac{1}{2}(x-\hat{x})P(x-\hat{x})+o\left(  \left\vert x-\hat
{x}\right\vert ^{2}\right)  ,\text{ as }x\rightarrow\hat{x}\}.
\end{array}
\right.
\]

\begin{theorem}
\label{th-visco-x} Suppose Assumptions \ref{assum-1}, \ref{assum-2} and
\ref{assum-3} hold. Let $\bar{u}(\cdot)$ be optimal for problem
\eqref{obje-eq}, and let $(p(\cdot),q(\cdot))$ and $(P(\cdot),Q(\cdot))$ $\in
L_{\mathbb{F}}^{\infty}(0,T;\mathbb{R})\times L_{\mathbb{F}}%
^{2,2}(0,T;\mathbb{R})$ be the solution to equation \eqref{eq-p} and
\eqref{eq-P} respectively. Furthermore, suppose that $q(\cdot)$ is bounded.
Then
\begin{equation}
\left\{
\begin{array}
[c]{l}%
\{p(s)\}\times\lbrack P(s),\infty)\subseteq D_{x}^{2,+}W(s,\bar{X}%
^{t,x;\bar{u}}(s)),\\
D_{x}^{2,-}W(s,\bar{X}^{t,x;\bar{u}}(s))\subseteq\{p(s)\}\times(-\infty
,P(s)],\text{ }\forall s\in\lbrack t,T],\text{ }P\text{-}a.s.
\end{array}
\right.  \label{main-relation}%
\end{equation}

\end{theorem}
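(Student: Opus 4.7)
The plan is to fix an arbitrary $s \in [t,T]$, perturb the state at time $s$ from $\bar{X}^{t,x;\bar{u}}(s)$ to $\bar{X}^{t,x;\bar{u}}(s) + h$ for small $h \in \mathbb{R}$, continue to use the optimal control $\bar{u}(\cdot)$ on $[s,T]$, and expand the resulting value $\bar{Y}^{s, \bar{X}(s)+h; \bar{u}}(s)$ to second order in $h$. Since $\bar{u}(\cdot)$ remains admissible (though generally not optimal) for the shifted initial condition, the DPP gives
\begin{equation*}
W(s, \bar{X}^{t,x;\bar{u}}(s) + h) \leq \bar{Y}^{s, \bar{X}(s)+h; \bar{u}}(s),
\end{equation*}
with equality at $h=0$. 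Thus a second-order upper expansion of the right-hand side in $h$ will supply elements of $D_x^{2,+}W(s,\bar{X}(s))$, while a matching lower expansion (obtained by the symmetric argument with $-h$) will constrain $D_x^{2,-}W(s,\bar{X}(s))$. Reading off the first-order coefficient will force any $(a,A) \in D_x^{2,\pm}W$ to satisfy $a = p(s)$, and the second-order coefficient will give the inequality $A \leq P(s)$ (resp.\ $A \geq P(s)$), establishing \eqref{main-relation}.

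The main analytic step is to write $\bar{X}^{s,\bar{X}(s)+h;\bar{u}} - \bar{X}^{t,x;\bar{u}} = h X_1 + h^2 X_2 + \hat{X}$, and similarly for $\bar Y, \bar Z$, where $(X_1, Y_1, Z_1)$ and $(X_2, Y_2, Z_2)$ solve the first- and second-order variational FBSDEs of \cite{Hu-JX} along the optimal trajectory, and $(\hat X, \hat Y, \hat Z)$ contains the remainders $\varepsilon_1, \varepsilon_2, \varepsilon_3$ referenced in the introduction. I will then use the representations of $(Y_1, Z_1)$ and $(Y_2, Z_2)$ via the first- and second-order adjoint processes $(p,q)$ and $(P,Q)$ established in \cite{Hu-JX}, namely $Y_1(s) = p(s) X_1(s)$ and $Y_2(s) = p(s) X_2(s) + \tfrac{1}{2} P(s) X_1(s)^2$ evaluated at the initial time, so that
\begin{equation*}
\bar{Y}^{s,\bar{X}(s)+h;\bar{u}}(s) = \bar{Y}^{t,x;\bar{u}}(s) + p(s) h + \tfrac{1}{2} P(s) h^2 + o(h^2).
\end{equation*}
Combined with $\bar{Y}^{t,x;\bar{u}}(s) = W(s,\bar{X}^{t,x;\bar{u}}(s))$, this delivers the jet inclusions.

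The hard part, as the introduction emphasises, is controlling the remainders $\varepsilon_i$ for the fully coupled variational system, because $\hat Z$ enters the drift and diffusion of the forward equation and one cannot first solve the forward remainder and then plug into the backward one. Here I will use the decoupling device indicated by \eqref{relation-hat} and \eqref{eq-epsilon2}: write $\hat Y = p \hat X + \rho$ and use the adjoint equation for $p$ together with It\^o's formula to derive a closed, contractive relation for $(\hat X, \hat Z, \rho)$ that allows simultaneous estimation of the forward and backward remainders in a single fixed-point argument. Assumption \ref{assum-3} together with the boundedness of $p$ and $q$ guarantees that the contraction constant is strictly less than one, yielding the required $o(h^2)$ estimates in $L^2_{\mathbb{F}^s}(\Omega; C([s,T];\mathbb{R}))$ and $L_{\mathbb{F}^s}^{2,2}(s,T;\mathbb{R})$.

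Finally, I will pass the expansion pointwise at the initial time $s$ by standard BSDE $L^\infty$-type estimates (using the boundedness of $q(\cdot)$ as in \cite{Hu-JX}) and combine it with the upper inequality from DPP. Taking $h \to 0^+$ and $h \to 0^-$ separately forces equality of the first-order coefficient with $p(s)$, and the corresponding one-sided inequality on the quadratic coefficient gives the set-inclusion \eqref{main-relation}. The same argument applied with reverse inequalities handles $D_x^{2,-}W$. Since $s \in [t,T]$ is arbitrary and the adjoint processes are $\mathbb{F}^t$-adapted and continuous in $s$, the almost sure statement on the whole interval follows by a standard separability/continuity argument.
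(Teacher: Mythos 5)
Your overall strategy coincides with the paper's: perturb the initial state at time $s$, keep the control $\bar u(\cdot)$, use the DPP inequality $W(s,\bar X^{t,x;\bar u}(s)+h)\le Y^{s,\bar X^{t,x;\bar u}(s)+h;\bar u}(s)$ together with $W(s,\bar X^{t,x;\bar u}(s))=\bar Y^{t,x;\bar u}(s)$, expand the right-hand side to second order with coefficients $p(s)$ and $P(s)$, and handle the coupled remainders by the decoupling relation $\hat Y=p\hat X+\varphi$ combined with the algebraic resolution of $\varepsilon_2$ (your ``contraction constant $<1$'' is exactly the paper's bound $\lvert(1-p\tilde\sigma_z)^{-1}(1-p\sigma_z)\rvert\le\beta_0^{-1}$ from Assumption \ref{assum-3}). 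The conclusion via one-sided limits and the rational-$x'$ / continuity argument also matches Step 5 of the paper.

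The one organizational difference is that you insert an intermediate three-term decomposition $hX_1+h^2X_2+(\text{remainder})$ with first- and second-order variational FBSDEs, in the style of \cite{Nie-SW-2}, and then invoke the representations $Y_1=pX_1$ and $Y_2(s)=p(s)X_2(s)+\tfrac12P(s)X_1(s)^2$ ``established in \cite{Hu-JX}.'' Be careful here: in \cite{Hu-JX} those representations are proved for the \emph{spike variation of the control} (where $X_1$ starts from $0$ and the equations carry $\delta\sigma\,\mathbf{1}_{E_\varepsilon}$ source terms), not for an initial-state perturbation, so the second-order identity cannot simply be imported; it has to be re-derived in the present setting. That re-derivation is precisely the content of the paper's Step 4: one sets $\tilde\varphi=\tfrac12P\hat X^2$, applies It\^o's formula, and shows via term-by-term conditional H\"older estimates (using boundedness of $q$, hence of $P$, and the $o$-continuity of the second derivatives of the coefficients) that $\varphi(s)-\tilde\varphi(s)=o(\lvert x'-\bar X^{t,x;\bar u}(s)\rvert^2)$. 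The paper avoids your extra $X_1,X_2$ layer entirely by working with the total increment $\hat X$ and the correction process $\varphi$ directly, which is somewhat leaner; your plan is workable but the quadratic estimate you are deferring to a citation is in fact the main technical body of the proof and must be carried out.
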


\begin{proof}
The proof is divided into five steps.

\textbf{Step 1:} Variational equations.

For each fixed $s\in\lbrack t,T]$ and $x^{\prime}\in\mathbb{R}$, denote by
$(X^{s,x^{\prime};\bar{u}}(\cdot),Y^{s,x^{\prime};\bar{u}}(\cdot
),Z^{s,x^{\prime};\bar{u}}(\cdot))$ the solution to the following FBSDE:%
\begin{equation}
\left\{
\begin{array}
[c]{rl}%
dX^{s,x^{\prime};\bar{u}}(r)= & b(r,X^{s,x^{\prime};\bar{u}}(r),Y^{s,x^{\prime
};\bar{u}}(r),Z^{s,x^{\prime};\bar{u}}(r),\bar{u}(s))dr\\
&+\sigma
(r,X^{s,x^{\prime};\bar{u}}(r),Y^{s,x^{\prime};\bar{u}}(r),Z^{s,x^{\prime
};\bar{u}}(r),\bar{u}(s))dB(r),\\
dY^{s,x^{\prime};\bar{u}}(r)= & -g(r,X^{s,x^{\prime};\bar{u}}%
(r),Y^{s,x^{\prime};\bar{u}}(r),Z^{s,x^{\prime};\bar{u}}(r),\bar
{u}(s))dr+Z^{s,x^{\prime};\bar{u}}(r)dB(r),\\
X^{s,x^{\prime};\bar{u}}(s)= & x^{\prime},\ Y^{s,x^{\prime};\bar{u}}%
(T)=\phi(X^{s,x^{\prime};\bar{u}}(T)),\ \  r\in\lbrack s,T].
\end{array}
\right.  \label{nota-new-1}%
\end{equation}
Set $\bar{\Theta}(r):=  \left(  \bar{X}^{t,x;\bar{u}}(r),\bar{Y}^{t,x;\bar{u}%
}(r),\bar{Z}^{t,x;\bar{u}}(r)\right)$,
\begin{equation}%
\begin{array}
[c]{rl}
\hat{X}(r):= & X^{s,x^{\prime};\bar{u}}(r)-\bar{X}^{t,x;\bar{u}}(r),\ \hat{Y}(r):=  Y^{s,x^{\prime};\bar{u}}(r)-\bar{Y}^{t,x;\bar{u}}(r),\\
\hat{Z}(r):= & Z^{s,x^{\prime};\bar{u}}(r)-\bar{Z}^{t,x;\bar{u}}(r),\  \hat{\Theta}(r):= \left(  \hat{X}(r),\hat{Y}(r),\hat{Z}(r)\right)  .
\end{array}
\label{nota-new-2}%
\end{equation}
By Theorem 2.2 in \cite{Hu-JX}, for each $\beta\in\lbrack2,8]$, we have $P$\text{-}a.s.
\begin{equation}
\mathbb{E}\left[  \left.  \sup\limits_{r\in\lbrack s,T]}\left(  |\hat
{X}(r)|^{\beta}+|\hat{Y}(r)|^{\beta}\right)  +\left(  \int_{s}^{T}|\hat
{Z}(r)|^{2}dr\right)  ^{\frac{\beta}{2}}\right\vert \mathcal{F}_{s}%
^{t}\right]  \leq C\left\vert x^{\prime}-\bar{X}^{t,x;\bar{u}}(s)\right\vert
^{\beta}.\label{new-eq-11}%
\end{equation}
It is easy to check that $\left(  \hat{X}(\cdot),\hat{Y}(\cdot),\hat{Z}%
(\cdot)\right)  $ satisfies the following FBSDE:
\begin{equation}
\left\{
\begin{array}
[c]{rl}%
d\hat{X}(r)= & \left[  \hat{\Theta}(r)Db(r)+\varepsilon_{1}(r)\right]
dr+\left[  \hat{\Theta}(r)D\sigma(r)+\varepsilon_{2}(r)\right]  dB(r),\\
\hat{X}(s)= & x^{\prime}-\bar{X}^{t,x;\bar{u}}(s),\\
d\hat{Y}(r)= & -\left[  \hat{\Theta}(r)Dg(r)+\varepsilon_{3}(r)\right]
dr+\hat{Z}(r)dB(r),\text{ }r\in\lbrack s,T],\\
\hat{Y}(T)= & \phi_{x}(\bar{X}^{t,x;\bar{u}}(T))\hat{X}(T)+\varepsilon_{4}(T),
\end{array}
\right.  \label{eq-xy-hat}%
\end{equation}
where%
\begin{equation}%
\begin{array}
[c]{ll}%
\varepsilon_{1}(r)= & \left(  \tilde{b}_{x}(r)-b_{x}(r)\right)
\hat{X}(r)+\left(  \tilde{b}_{y}(r)-b_{y}(r)\right)  \hat
{Y}(r)+\left(  \tilde{b}_{z}(r)-b_{z}(r)\right)  \hat{Z}(r),\\
\varepsilon_{2}(r)= & \left(  \tilde{\sigma}_{x}(r)-\sigma
_{x}(r)\right)  \hat{X}(r)+\left(  \tilde{\sigma}_{y}%
(r)-\sigma_{y}(r)\right)  \hat{Y}(r)+\left(  \tilde{\sigma}_{z}(r)-\sigma_{z}(r)\right)  \hat{Z}(r),\\
\varepsilon_{3}(r)= & \left(  \tilde{g}_{x}(r)-g_{x}(r)\right)
\hat{X}(r)+\left(  \tilde{g}_{y}(r)-g_{y}(r)\right)  \hat
{Y}(r)+\left(  \tilde{g}_{z}(r)-g_{z}(r)\right)  \hat{Z}(r),\\
\varepsilon_{4}(T)= & [\tilde{\phi}_{x}(T)-\phi_{x}(\bar{X}^{t,x;\bar{u}}(T))]\hat
{X}(T),\\
\tilde{\psi}_{\kappa}(r)= &
{\displaystyle\int\nolimits_{0}^{1}}
 \psi_{\kappa}(r,\bar{\Theta}(r)+\lambda\hat{\Theta}(r),\bar
{u}(r))  d\lambda \text{ for }\psi=b,\sigma
,g,\phi  \text{ and }\kappa=x,y,z.
\end{array}
\label{new-eqqw-11112}%
\end{equation}

\textbf{Step 2:} Estimates of the remainder terms of FBSDE.

By Assumption \ref{assum-2}, we derive that, for $i=1,2,3,$
\[
|\varepsilon_{i}(r)|\leq C\left(  |\hat{X}(r)|^{2}+|\hat{Y}(r)|^{2}+|\hat
{Z}(r)|^{2}\right)  \text{ and }|\varepsilon_{4}(T)|\leq C|\hat{X}(T)|^{2}.
\]
Then, by (\ref{new-eq-11}), we obtain that for each $\beta\in\lbrack2,4]$,
\begin{equation}%
\begin{array}
[c]{rl}%
\mathbb{E}\left[  \left.  \left(  \int_{s}^{T}|\varepsilon_{i}(r)|dr\right)
^{\beta}\right\vert \mathcal{F}_{s}^{t}\right]  & \leq C \left\vert x^{\prime}%
-\bar{X}^{t,x;\bar{u}}(s)\right\vert ^{2\beta},\text{ }i=1,2,3,\\
\mathbb{E}\left[  \left.  |\varepsilon_{4}(T)|^{\beta}\right\vert
\mathcal{F}_{s}^{t}\right]  & \leq C \left\vert x^{\prime}-\bar{X}^{t,x;\bar{u}%
}(s)\right\vert ^{2\beta}.
\end{array}
\label{est-epsi-1-4}%
\end{equation}

\textbf{Step 3}: Relationship between $\hat{X}(\cdot)$ and $(\hat{Y}%
(\cdot),\hat{Z}(\cdot))$.

By Theorem 5.3 in \cite{Hu-JX}, we get%
\begin{equation}%
\begin{array}
[c]{rl}%
\hat{Y}(r)= & p(r)\hat{X}(r)+\varphi(r),\\
\hat{Z}(r)= & K_{1}(r)\hat{X}(r)+(1-p(r)\sigma_{z}(r))^{-1}[p(r)\sigma_y(r)\varphi(r)+p(r)\varepsilon_2(r)+\nu(r)],
\end{array}
\label{relation-hat}%
\end{equation}
where $p(\cdot)$ is the solution to first-order adjoint equation (\ref{eq-p}),
and $(\varphi(\cdot),\nu(\cdot))$ is the solution to the following linear
BSDE:
\begin{equation}
\left\{
\begin{array}
[c]{cl}%
d\varphi(r)= & -\left[  A(r)\varphi(r)+C(r)\nu(r)+p(r)\varepsilon
_{1}(r)+q(r)\varepsilon_{2}(r)+\varepsilon_{3}(r)\right.\\
&\left.+H_{z}(r)(1-p(r)\sigma
_{z}(r))^{-1}p(r)\varepsilon_{2}(r)\right]  dr+\nu(r)dB(r),\\
\varphi(T)= & \varepsilon_{4}(T),
\end{array}
\right.  \label{eq-varphi}%
\end{equation}
\[%
\begin{array}
[c]{rl}%
A(r)= & p(r)b_{y}(r)+q(r)\sigma_{y}(r)+g_{y}(r)+(1-p(r)\sigma_{z}%
(r))^{-1}\sigma_{y}(r)p(r)H_{z}(r),\\
C(r)= & (1-p(r)\sigma_{z}(r))^{-1}H_{z}(r),\\
H_{z}(r)= & p(r)b_{z}(r)+q(r)\sigma_{z}(r)+g_{z}(r).
\end{array}
\]
Thus we can write $\hat{\Theta}(r)$ as%
\begin{equation}\label{def-theta-hat}
\hat{\Theta}(r)=(1,p(r),K_{1}(r))\hat{X}(r)+\hat{L}(r),
\end{equation}
where $\hat{L}(r):=(0,\varphi(r),(1-p(r)\sigma_{z}(r))^{-1}[p(r)\sigma_y(r)\varphi(r)+p(r)\varepsilon_2(r)+\nu(r)]).$

It follows from Theorem 3.6 in \cite{Hu-JX} that%
\begin{equation}\label{p-less}
|p(r)|\leq s(0)\vee(-l(0)) \text{ for }r\in\lbrack s,T].
\end{equation}
By relations (\ref{new-eqqw-11112}) and (\ref{relation-hat}), we get
\begin{equation}%
\begin{array}
[c]{ll}%
&\varepsilon_{2}\left(  r\right) \\
& =\left(  \tilde{\sigma}_{x}
(r)-\sigma_{x}(r)\right)  \hat{X}(r)+\left(  \tilde{\sigma}_{y}
(r)-\sigma_{y}(r)\right)  \left(  p(r)\hat{X}\left(  r\right)  +\varphi
\left(  r\right)  \right) +\left(  \tilde{\sigma}_{z}(r)-\sigma
_{z}(r)\right)\\
&\ \ \ \cdot \left[ K_1(r)\hat{X}(r)+ \left(  1-p\left(  r\right)  \sigma_{z}\right)
^{-1}\left(  p\left(  r\right)  \sigma_{y}\left(  r\right)  \varphi\left(
r\right)  +p\left(  r\right)  \varepsilon_{2}\left(  r\right)  +\nu\left(
r\right)  \right)  \right]  .
\end{array}
\label{eq-epsil2}%
\end{equation}
Thus, we have
\begin{equation}
\begin{array}
[c]{l}%
\varepsilon_{2}\left(  r\right)  =\frac{1-p\left(  r\right)  \sigma_{z}%
(r)}{1-p\left(  r\right)  \tilde{\sigma}_{z}(r)}\left\{  \left[
\left(  \tilde{\sigma}_{x}(r)-\sigma_{x}(r)\right)  \hat
{X}(r)+\left(  \tilde{\sigma}_{y}(r)-\sigma_{y}(r)\right)
\left(  p(r)\hat{X}\left(  r\right)  +\varphi\left(  r\right)  \right)
\right]  \right.  \\
\text{ \ \ \ \ \ \ \ \ \ }\left.  +\left(  \tilde{\sigma}_{z}
(r)-\sigma_{z}(r)\right)  \left[ K_1(r)\hat{X}(r)+  \left(  p\left(  r\right)  \sigma
_{y}\left(  r\right)  \varphi\left(  r\right)  +\nu\left(  r\right)  \right)
\right]  \right\}  .
\end{array}
\label{eq-epsilon2}
\end{equation}
By Assumption \ref{assum-3} and (\ref{p-less}),  one has $\left\vert \frac{1-p\left(  r\right)  \sigma_{z}(r)}{1-p\left(
r\right)  \tilde{\sigma}_{z}(r)}\right\vert \leq\beta_{0}^{-1}$.

\textbf{Step 4:} Variation of $\varphi$.

By (\ref{est-epsi-1-4}) and the estimate of BSDE for (\ref{eq-varphi}), we obtain that, for each $\beta\in
\lbrack2,4],$
\begin{equation}%
\begin{array}
[c]{l}%
\mathbb{E}\left[  \left.  \sup\limits_{r\in\lbrack s,T]}|\varphi(r)|^{\beta
}+\left(  \int_{s}^{T}|\nu(r)|^{2}dr\right)  ^{\frac{\beta}{2}}\right\vert
\mathcal{F}_{s}^{t}\right] \\
\leq C\mathbb{E}\left[  \left.  |\varepsilon_{4}(T)|^{\beta}+\left(  \int%
_{s}^{T}(|\varepsilon_{1}(r)|+|\varepsilon_{2}(r)|+|\varepsilon_{3}%
(r)|)dr\right)  ^{\beta}\right\vert \mathcal{F}_{s}^{t}\right] \\
\leq C\left\vert x^{\prime}-\bar{X}^{t,x;\bar{u}}(s)\right\vert ^{2\beta
},P\text{-}a.s..
\end{array}
\label{est-phi}%
\end{equation}
In the followings, we want to prove%
\begin{equation}
\varphi(s)-\frac{1}{2}P(r)(\hat{X}(r))^{2} =o(\left\vert x^{\prime}-\bar
{X}^{t,x;\bar{u}}(s)\right\vert ^{2}),\text{ }P \text{-}a.s.. \label{est-phi-hat}%
\end{equation}
Define%
\begin{align}
\tilde{\varphi}(r)  &  =\frac{1}{2}P(r)(\hat{X}(r))^{2};\\
\tilde{\nu}(r)  &  =P(r)\hat{X}(r)(\hat{\Theta}(r)D\sigma(r)+\varepsilon
_{2}(r))+\frac{1}{2}Q(r)\left(  \hat{X}(r)\right)  ^{2}.
\end{align}
Applying It\^{o}'s formula to $\frac{1}{2}P(r)(\hat{X}(r))^{2}$, by (\ref{def-theta-hat}), we obtain
that $\left(  \tilde{\varphi}(r),\tilde{\nu}(r)\right)  $ satisfies the
following BSDE:%
\begin{equation}
\left\{
\begin{array}
[c]{ll}%
d\tilde{\varphi}(r) =&\left\{ P(r)\left[ \left(  \hat{L}(r)Db(r)+\varepsilon
_{1}(r)\right)  \hat{X}(r) +\frac{1}{2}(\hat{L}(r)D\sigma(r)+\varepsilon_{2}(r))^{2}
\right.\right.\\
&\left.+(1,p(r),K_{1}(r))D\sigma(r)\hat{X}(r)(\hat
{L}(r)D\sigma(r)+\varepsilon_{2}(r))-\frac{1}{2}H_y(r)\hat{X}(r)^2 \right]\\
&-\frac{1}{2}\left[(1,p(r),K_1(r))D^2H(s)(1,p(r),K_1(r))^{\intercal}+H_z(s)K_2(s)\right]\hat{X}(r)^2\\
&\left.+Q(r)(\hat{L}(r)D\sigma(r)+\varepsilon_{2}(r))\hat{X}(r) \right\}dr+\tilde{\nu}(r)dB(r),\\
\tilde{\varphi}(T)=& \frac{1}{2}\phi_{xx}(\bar{X}^{t,x;\bar{u}}(T))\left(
\hat{X}(T)\right)  ^{2}.
\end{array}
\right.  \label{eq-PX2}%
\end{equation}
Set
\[
\hat{\varphi}(r)=\varphi(r)-\tilde{\varphi}(r),\ \hat{\nu}(r)=\nu\left(
r\right)  -\tilde{\nu}(r).
\]

Replace $\varepsilon_{1}(r)$ by $\frac{1}{2}\hat{\Theta}(r)D^{2}%
b(r)\hat{\Theta}(r)^{\intercal}+\varepsilon_{5}(r)$, $\varepsilon_{2}(r)$ by $\frac{1}%
{2}\hat{\Theta}(r)D^{2}\sigma(r)\hat{\Theta}(r)^{\intercal}+\varepsilon_{6}(r)$,
$\varepsilon_{3}(r)$ by $\frac{1}{2}\hat{\Theta}(r)D^{2}g(r)\hat{\Theta
}(r)^{\intercal}+\varepsilon_{7}(r)$ and $\varepsilon_{4}(T)$ by $\frac{1}{2}\phi
_{xx}(\bar{X}^{t,x;\bar{u}}(T))\left(  \hat{X}(T)\right)  ^{2}+\varepsilon
_{8}(T)$ in (\ref{eq-varphi}), where
\[%
\begin{array}
[c]{ll}%
\varepsilon_{5}(r)= & \hat{\Theta}(r)\int_{0}^{1}\int_{0}^{1}%
\lambda\left[  D^{2}b(r,\bar{\Theta}^{t,x;\bar{u}}(r)+\theta\lambda\hat
{\Theta}(r),\bar{u}(r))-D^{2}b(r)\right]  d\lambda d\theta\hat{\Theta}(r)^{\intercal},\\
\varepsilon_{6}(r)= & \hat{\Theta}(r)\int_{0}^{1}\int_{0}^{1}%
\lambda\left[  D^{2}\sigma(r,\bar{\Theta}^{t,x;\bar{u}}(r)+\theta\lambda
\hat{\Theta}(r),\bar{u}(r))-D^{2}\sigma(r)\right]  d\lambda d\theta\hat
{\Theta}(r)^{\intercal},\\
\varepsilon_{7}(r)= & \hat{\Theta}(r)\int_{0}^{1}\int_{0}^{1}%
\lambda\left[  D^{2}g(r,\bar{\Theta}^{t,x;\bar{u}}(r)+\theta\lambda\hat
{\Theta}(r),\bar{u}(r))-D^{2}g(r)\right]  d\lambda d\theta\hat{\Theta}(r)^{\intercal},\\
\varepsilon_{8}(T)= & \int_{0}^{1}\int_{0}^{1}\lambda\left[  \phi_{xx}(\bar
{X}^{t,x;\bar{u}}(r)+\theta\lambda\hat{X}(T))-\phi_{xx}(\bar{X}^{t,x;\bar{u}%
}(T))\right]  d\lambda d\theta\left(  \hat{X}(T)\right)  ^{2}.
\end{array}
\]
By (\ref{new-k2}), one can verify that $\left(  \hat{\varphi}(\cdot),\hat{\nu}%
(\cdot)\right)  $ satisfies the following linear BSDE
\begin{equation}\label{eq-510}
\left\{
\begin{array}
[c]{cl}%
d\hat{\varphi}(r)= & -\left[  A(r)\hat{\varphi}(r)+C(r)\hat{\nu}%
(r)+I(r)\right]  dr+\hat{\nu}(r)dB\left(  r\right)  ,\\
\hat{\varphi}(T)= & \varepsilon_{8}(T),
\end{array}
\right.
\end{equation}
where
\begin{equation}\label{def-I}
\begin{array}
[c]{ll}%
\mathrm{I}(r)=&C(r)P(r)(\hat{L}(r)D\sigma(r)+\varepsilon_2(r))\hat{X}(r)+p(r)\varepsilon_{5}(r)+\varepsilon_{7}(r)\\
&+p(r)(1,p(r),K_1(r))D^2b(r)\hat{L}(r)^{\intercal}\hat{X}(r)+\frac{1}{2}p(r)\hat{L}(r)D^2b(r)\hat{L}(r)^{\intercal}\\
&+\left[  q(r)+H_{z}(r)(1-p(r)\sigma_{z}(r))^{-1}p(r)\right] \\
& \ \ \ \cdot \left[
\frac{1}{2}\hat{L}(r)D^{2}\sigma(r)\hat{L}\left(  r\right)  ^{\intercal}%
+(1,p(r),K_{1}(r))D^{2}\sigma(r)\hat{L}\left(  r\right)  ^{\intercal}\hat
{X}(r)+\varepsilon_{6}(r)\right] \\
&+(1,p(r),K_1(r))D^2g(r)\hat{L}(r)^{\intercal}\hat{X}(r)+\frac{1}{2}\hat{L}(r)D^2g(r)\hat{L}(r)^{\intercal}\\
&+P(r)\left[  \left(  \hat{L}(r)Db(r)+\varepsilon
_{1}(r)\right)  \hat{X}(r)+\frac{1}{2}(\hat{L}(r)D\sigma(r)+\varepsilon_{2}%
(r))^{2}\right.\\
&\left.+(1,p(r),K_{1}(r))D\sigma(r)\left(  \hat
{L}(r)D\sigma(r)+\varepsilon_{2}(r)\right)  \hat{X}(r)\right]\\
&+Q(r)\left( \hat{L}(r)D\sigma(r)+\varepsilon_{2}(r)\right)  \hat{X}(r).
\end{array}
\end{equation}
Note that $A(\cdot)$ and $C(\cdot)$ are bounded. Then by the standard estimate of BSDE, we obtain that
\begin{equation}
\left\vert \hat{\varphi}(s)\right\vert ^{2}\leq C\mathbb{E}\left[  \left.
\left\vert \varepsilon_{8}(T)\right\vert ^{2}+\left(  \int_{s}^{T}%
|\mathrm{I}(r)|dr\right)  ^{2}\right\vert \mathcal{F}_{s}^{t}\right]  .
\end{equation}
Since $q(\cdot)$ is bounded, one can verify that $P(\cdot)$ is bounded. By (\ref{new-eqqw-11112}), (\ref{def-theta-hat}) and (\ref{eq-epsilon2}), it is easy to check that
\begin{equation}%
\begin{array}
[c]{rl}%
\left\vert \mathrm{I} \left(  r\right)  \right\vert \leq & C\left[  \left(  1+\left\vert
Q\left(  r\right)  \right\vert \right)  \left(  \rho\left(  r\right)
\left\vert \hat{X}(r)\right\vert ^{2}+\left\vert \hat{X}(r)\varphi\left(
r\right)  \right\vert +\left\vert \hat{X}(r)\nu\left(  r\right)  \right\vert
\right)  \right.  \\
& \left.  +\left\vert \varphi\left(  r\right)  \right\vert ^{2}+\left\vert
\nu\left(  r\right)  \right\vert ^{2}\right],
\end{array}
\end{equation}
where $C$ is a constant and
\begin{equation}%
\begin{array}
[c]{rl}%
\rho\left(  r\right)  = & \sum_{i=1}^{3}\displaystyle\int_{0}^{1}\int_{0}^{1}\lambda\left|
D^{2}\psi_{i}(r,\bar{\Theta}(r)+\theta\lambda\hat{\Theta}(r),\bar{u}%
(r))-D^{2}\psi_{i}(r)\right|  d\lambda d\theta\\
& +\sum_{i=1}^{2}\displaystyle\int_{0}^{1}\left|  D\psi_{i}(r,\bar{\Theta}(r)+\theta
\hat{\Theta}(r),\bar{u}(r))-D\psi_{i}(r)\right|  d\theta
\end{array}
\label{def-rho}
\end{equation}
for $\psi_{1}=b$, $\psi_{2}=\sigma$, $\psi_{3}=g$.

Next, we estimate term by term.
\[%
\begin{array}
[c]{l}%
\mathbb{E}\left[  \left.  \left\vert \varepsilon_{8}(T)\right\vert
^{2}\right\vert \mathcal{F}_{s}^{t}\right] \\
\leq\left\{  \mathbb{E}\left[  \left.  \left\vert \hat{X}(T)\right\vert
^{8}\right\vert \mathcal{F}_{s}^{t}\right]  \right\}  ^{\frac{1}{2}}\\
\ \ \cdot\left\{
\mathbb{E}\left[  \left.  \left\vert \int_{0}^{1}\int_{0}^{1}\lambda\left[
\phi_{xx}(\bar{X}^{t,x;\bar{u}}(r)+\theta\lambda\hat{X}(T))-\phi_{xx}(\bar
{X}^{t,x;\bar{u}}(T))\right]  d\lambda d\theta\right\vert ^{4}\right\vert
\mathcal{F}_{s}^{t}\right]  \right\}  ^{\frac{1}{2}}\\
= o(\left\vert x^{\prime}-\bar{X}^{t,x;\bar{u}}(s)\right\vert ^{4});
\end{array}
\]%

 \[%
\begin{array}
[c]{l}%
\mathbb{E}\left[  \left.  \left(  \int_{s}^{T}\left(  1+\left\vert Q\left(
r\right)  \right\vert \right)  \rho\left(  r\right)  |\hat{X}(r)|^{2}%
dr\right)  ^{2}\right\vert \mathcal{F}_{s}^{t}\right]  \\
\leq C\mathbb{E}\left[  \left.  \underset{s\leq r\leq T}{\sup}\left\vert
\hat{X}(r)\right\vert ^{4}\int_{s}^{T}\left(  1+\left\vert Q\left(  r\right)
\right\vert \right)  ^{2}dr\int_{s}^{T}\rho\left(  r\right)  ^{2}dr\right\vert
\mathcal{F}_{s}^{t}\right]  \\
=o(\left\vert x^{\prime}-\bar{X}^{t,x;\bar{u}}(s)\right\vert ^{4});
\end{array}
\]%
\begin{equation}
\begin{array}
[c]{l}%
\mathbb{E}\left[  \left.  \left(  \int_{s}^{T}\left(  1+\left\vert Q\left(
r\right)  \right\vert \right)  \left\vert \hat{X}(r)\nu\left(  r\right)
\right\vert dr\right)  ^{2}\right\vert \mathcal{F}_{s}^{t}\right]  \\
\leq C\mathbb{E}\left[  \left.  \underset{s\leq r\leq T}{\sup}\left\vert
\hat{X}(r)\right\vert ^{2}\int_{s}^{T}\left(  1+\left\vert Q\left(  r\right)
\right\vert \right)  ^{2}dr\int_{s}^{T}\left\vert \nu\left(  r\right)
\right\vert ^{2}dr\right\vert \mathcal{F}_{s}^{t}\right]  \\
\leq C\left\{  \mathbb{E}\left[  \left.  \underset{s\leq r\leq T}{\sup
}\left\vert \hat{X}(r)\right\vert ^{4}\right\vert \mathcal{F}_{s}^{t}\right]
\right\}  ^{\frac{1}{2}}\left\{  \mathbb{E}\left[  \left.  \left(  \int%
_{s}^{T}\left\vert \nu\left(  r\right)  \right\vert ^{2}dr\right)
^{4}\right\vert \mathcal{F}_{s}^{t}\right]  \right\}  ^{\frac{1}{4}}\\
\ \ \cdot\left\{
\mathbb{E}\left[  \left.  \left(  \int_{s}^{T}\left(  1+\left\vert Q\left(
r\right)  \right\vert \right)  ^{2}dr\right)  ^{4}\right\vert \mathcal{F}%
_{s}^{t}\right]  \right\}  ^{\frac{1}{4}}\\
=o(\left\vert x^{\prime}-\bar{X}^{t,x;\bar{u}}(s)\right\vert ^{4});
\end{array}
\label{eq-est059}
\end{equation}
$$\mathbb{E}\left[\left(\int_{s}^{T}\left\vert \varphi\left(  r\right)  \right\vert ^{2}+\left\vert
\nu\left(  r\right)  \right\vert ^{2}dr\right)^2\right]=o(|x^{\prime}-\bar{X}^{t,x;\bar{u}}(s)|^4).$$
The estimate for $\mathbb{E}\left[  \left.  \left(  \int_{s}^{T}\left(  1+\left\vert Q\left(
r\right)  \right\vert \right)  \left\vert \hat{X}(r)\varphi\left(  r\right)
\right\vert dr\right)  ^{2}\right\vert \mathcal{F}_{s}^{t}\right]$ is similar to (\ref{eq-est059}). Thus, we obtain
\[
\left\vert
\hat{\varphi}(s)\right\vert =o(\left\vert x^{\prime}-\bar{X}^{t,x;\bar{u}%
}(s)\right\vert ^{2}), \  P\text{-}a.s..
\]

\textbf{Step 5}: Completion of the proof.

Since the set of all rational $x'\in \mathbb{R}$ is countable, we can find a subset
$\Omega_{0}\subseteq\Omega$ with $P(\Omega_{0})=1$ such that for any
$\omega_{0}\in\Omega_{0}$,
\[
\left\{
\begin{array}
[c]{l}%
W(s,\bar{X}^{t,x;\bar{u}}(s,\omega_{0})=\bar{Y}^{t,x;\bar{u}}(s,\omega
_{0}),\text{ (\ref{new-eq-11}), (\ref{est-epsi-1-4}), (\ref{relation-hat}),
(\ref{est-phi}), (\ref{est-phi-hat}) are satisfied}\\
\text{for any rational }x^{\prime}\text{,}\ (\Omega,\mathcal{F},P(\cdot|\mathcal{F}_{s}^{t})\left(  \omega_{0}\right)
,B(\cdot)-B(s);u(\cdot))|_{[s,T]}\in \mathcal{U}^{w}[s,T],\text{and
} \\
\underset{s\leq r\leq T}{\sup}\left[  \left\vert p\left(  r,\omega
_{0}\right)  \right\vert +\left\vert P(r,\omega_{0})\right\vert \right]
<\infty .
\end{array}
\right.
\]
The first relation of the above is obtained by the DPP (see \cite{Hu-JX-DPP}).
Let $\omega_{0}\in\Omega_{0}$ be fixed, and then for any rational number
$x^{\prime}$,%
\begin{equation}
\left\vert \hat{\varphi}(s,\omega_{0})\right\vert =o(\left\vert x^{\prime
}-\bar{X}^{t,x;\bar{u}}(s,\omega_{0})\right\vert ^{2}),\text{ for all }%
s\in\lbrack t,T].
\end{equation}
By the definition of $\hat{\varphi}(s)$, we get for each $s\in\lbrack t,T]$,
\[%
\begin{array}
[c]{l}%
Y^{s,x^{\prime};\bar{u}}(s,\omega_{0})-\bar{Y}^{t,x;\bar{u}}(s,\omega_{0})\\
=p(s,\omega_{0})\hat{X}(s,\omega_{0})+\frac{1}{2}P(s,\omega_{0})\hat
{X}(s,\omega_{0})^{2}+o(\left\vert x^{\prime}-\bar{X}^{t,x;\bar{u}}%
(s,\omega_{0})\right\vert ^{2})\\
=p(s,\omega_{0})(x^{\prime}-\bar{X}^{t,x;\bar{u}}(s,\omega_{0}))+\frac{1}%
{2}P(s,\omega_{0})\left(  x^{\prime}-\bar{X}^{t,x;\bar{u}}(s,\omega
_{0})\right)  ^{2}\\
\ \ +o(\left\vert x^{\prime}-\bar{X}^{t,x;\bar{u}}(s,\omega
_{0})\right\vert ^{2}).
\end{array}
\]
Thus, for each $s\in\lbrack t,T]$,%
\begin{equation}%
\begin{array}
[c]{l}%
W(s,x^{\prime})-W(s,\bar{X}^{t,x;\bar{u}}(s,\omega_{0}))\\
\leq Y^{s,x^{\prime};\bar{u}}(s,\omega_{0})-\bar{Y}^{t,x;\bar{u}}(s,\omega
_{0})\\
=p(s,\omega_{0})(x^{\prime}-\bar{X}^{t,x;\bar{u}}(s,\omega_{0}))+\frac{1}%
{2}P(s,\omega_{0})\left(  x^{\prime}-\bar{X}^{t,x;\bar{u}}(s,\omega
_{0})\right)  ^{2}\\
\ \ +o(\left\vert x^{\prime}-\bar{X}^{t,x;\bar{u}}(s,\omega
_{0})\right\vert ^{2}).
\end{array}
\label{inequality-final}%
\end{equation}
Note that the term $o(\left\vert x^{\prime}-\bar{X}^{t,x;\bar{u}}(s,\omega
_{0})\right\vert ^{2})$ in the above depends only on the size of $\left\vert x^{\prime}-\bar{X}^{t,x;\bar{u}}(s,\omega
_{0})\right\vert$ and it is independent of $x'$. Therefore, by the continuity of $W(s,\cdot)$, we can easily obtain that
(\ref{inequality-final}) holds for all $x^\prime\in\mathbb{R}$. By the
definition of super-jets, we have
\[
\{p(s)\}\times\lbrack P(s),\infty)\subseteq D_{x}^{2,+}W(s,X^{t,x;\bar{u}%
}(s)).
\]
Now we prove that
\[
D_{x}^{2,-}W(s,X^{t,x;\bar{u}}(s))\subseteq\{p(s)\}\times(-\infty,P(s)].
\]
Fix an $\omega\in\Omega$ such that (\ref{inequality-final}) holds for all
$x^{\prime}\in\mathbb{R}$. For any $$(\hat{p},\hat{P})\in D_{x}^{2,-}%
V(s,\bar{X}^{t,x;\bar{u}}(s)),$$ by definition of sub-jets, we deduce%
\[%
\begin{array}
[c]{rl}%
0&\leq\underset{x^{\prime}\rightarrow \bar{X}^{t,x;\bar{u}}(s)}{\lim\inf}\left\{
\frac{W(s,x^{\prime})-W(s,\bar{X}^{t,x;\bar{u}}(s))-\hat{p}\left(  x^{\prime
}-\bar{X}^{t,x;\bar{u}}(s)\right)  -\frac{1}{2}\hat{P}\left(  x^{\prime}%
-\bar{X}^{t,x;\bar{u}}(s)\right)  ^{2}}{|x^{\prime}-\bar{X}^{t,x;\bar{u}%
}(s)|^{2}}\right\} \\
&\leq\underset{x^{\prime}\rightarrow \bar{X}^{t,x;\bar{u}}(s)}{\lim\inf}\left\{
\frac{\left(  p(s)-\hat{p}\right)  \left(  x^{\prime}-\bar{X}^{t,x;\bar{u}%
}(s)\right)  +\frac{1}{2}(P(s)-\hat{P})\left(  x^{\prime}-\bar{X}^{t,x;\bar
{u}}(s)\right)  ^{2}}{|x^{\prime}-\bar{X}^{t,x;\bar{u}}(s)|^{2}}\right\}  .
\end{array}
\]
Then it is necessary that
\[
\hat{p}=p(s),\hat{P}\leq P(s),\text{ }\forall s\in\lbrack t,T],\ P\text{-}a.s..
\]
This completes the proof.
\end{proof}

\subsection{Differential in time variable}

Let us recall the notions of right super-and sub-jets in the time variable
$t$. For $w\in C([0,T]\times\mathbb{R})$ and $(\hat{t},\hat{x})\in
\lbrack0,T)\times\mathbb{R}$, define
\[
\left\{
\begin{array}
[c]{rll}%
D_{t+}^{1,+}w(\hat{t},\hat{x}) & := & \{q\in\mathbb{R}:w\left(  t,\hat
{x}\right)  \leq w(\hat{t},\hat{x})+q(t-\hat{t})+o\left(  \left\vert t-\hat
{t}\right\vert \right)  \text{ as }t\downarrow\hat{t},\\
D_{t+}^{1,-}w(\hat{t},\hat{x}) & := & \{q\in\mathbb{R}:w\left(  t,\hat
{x}\right)  \geq w(\hat{t},\hat{x})+q(t-\hat{t})+o\left(  \left\vert t-\hat
{t}\right\vert \right)  \text{ as }t\downarrow\hat{t}.
\end{array}
\right.
\]

\begin{theorem}
\label{th-visco-t}Suppose the same assumptions as in Theorem \ref{th-visco-x}.
Then, for each $s\in\lbrack t,T]$,
\[
\left\{
\begin{array}
[c]{l}%
\lbrack\mathcal{H}_{1}(s,\bar{X}^{t,x;\bar{u}}(s),\bar{Y}^{t,x;\bar{u}%
}(s),\bar{Z}^{t,x;\bar{u}}(s)),\infty)\subseteq D_{t+}^{1,+}W(s,X^{t,x;\bar
{u}}(s)),\\
D_{t+}^{1,-}W(s,X^{t,x;\bar{u}}(s))\subseteq(-\infty,\mathcal{H}_{1}(s,\bar
{X}^{t,x;\bar{u}}(s),\bar{Y}^{t,x;\bar{u}}(s),\bar{Z}^{t,x;\bar{u}%
}(s))],\text{ }P\text{-}a.s.,
\end{array}
\right.
\]
where%
\[
\begin{array}
[c]{l}%
\mathcal{H}_{1}(s,\bar{X}^{t,x;\bar{u}}(s),\bar{Y}^{t,x;\bar{u}}(s),\bar
{Z}^{t,x;\bar{u}}(s))\\=-\mathcal{H}(s,\bar{X}^{t,x;\bar{u}}(s),\bar
{Y}^{t,x;\bar{u}}(s),\bar{Z}^{t,x;\bar{u}}(s),\bar{u}%
(s),p(t),q(t),P(t))+P(s)\sigma\left(  s\right)  ^{2}.
\end{array}
\]

\end{theorem}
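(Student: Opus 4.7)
The plan is to reduce the time-variable jet inclusions to the spatial jet result of Theorem \ref{th-visco-x} by exploiting the flow identity $W(r,\bar X^{t,x;\bar u}(r)) = \bar Y^{t,x;\bar u}(r)$ for $r\in[t,T]$, which follows from the optimality of $\bar u(\cdot)$ combined with the DPP of \cite{Hu-JX-DPP}. Working on a full-probability set $\Omega_0\subseteq\Omega$ on which Theorem \ref{th-visco-x}, the DPP flow identity, and the local boundedness of $p,P,q,\bar Z$ hold simultaneously, the goal is to establish for each $s\in[t,T)$ the one-sided asymptotic
\[
W(s+h,\bar X^{t,x;\bar u}(s)) - W(s,\bar X^{t,x;\bar u}(s)) \le \mathcal{H}_{1}(s,\bar X^{t,x;\bar u}(s),\bar Y^{t,x;\bar u}(s),\bar Z^{t,x;\bar u}(s))\,h + o(h),\quad h\downarrow 0.
\]
This single bound yields both claims: the super-jet inclusion is immediate from the definition of $D_{t+}^{1,+}W$, and any $\hat q \in D_{t+}^{1,-}W(s,\bar X^{t,x;\bar u}(s))$ obeys $\hat q\,h + o(h) \le W(s+h,\bar X^{t,x;\bar u}(s)) - W(s,\bar X^{t,x;\bar u}(s))$, so combining with the bound and letting $h\downarrow 0$ forces $\hat q\le\mathcal{H}_{1}$.

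Using the flow identity at both $s$ and $s+h$, the quantity on the left splits as
\[
\bigl[W(s+h,\bar X^{t,x;\bar u}(s)) - W(s+h,\bar X^{t,x;\bar u}(s+h))\bigr] + \bigl[\bar Y^{t,x;\bar u}(s+h) - \bar Y^{t,x;\bar u}(s)\bigr].
\]
To the first bracket I apply Theorem \ref{th-visco-x} at time $s+h$: since $(p(s+h),P(s+h))\in D_x^{2,+}W(s+h,\bar X^{t,x;\bar u}(s+h))$, evaluating the super-jet inequality at $x^\prime = \bar X^{t,x;\bar u}(s)$ produces a quadratic expansion in $\bar X^{t,x;\bar u}(s)-\bar X^{t,x;\bar u}(s+h)$, whose Peano remainder depends only on the magnitude of this increment (as noted at the end of the proof of Theorem \ref{th-visco-x}). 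Since the left-hand side $W(s+h,\bar X^{t,x;\bar u}(s)) - W(s,\bar X^{t,x;\bar u}(s))$ is $\mathcal{F}_s^t$-measurable, taking $\mathbb{E}[\,\cdot\mid\mathcal{F}_s^t]$ of the inequality preserves it pathwise.

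It then remains to identify the three leading-order contributions on the right. The BSDE increment yields $-g(s)\,h + o(h)$ by Lipschitz continuity of $g$ and the standard $L^\beta$-estimates of the forward-backward trajectory. The quadratic term $\tfrac{1}{2}P(s+h)(\bar X^{t,x;\bar u}(s)-\bar X^{t,x;\bar u}(s+h))^2$ produces the $P\sigma^2$ contribution via the It\^o isometry applied to $\int_s^{s+h}\sigma(r)\,dB(r)$ together with continuity of $P$. The delicate cross term $p(s+h)(\bar X^{t,x;\bar u}(s)-\bar X^{t,x;\bar u}(s+h))$ is handled by applying It\^o's product formula to $r\mapsto p(r)\bigl(\bar X^{t,x;\bar u}(s)-\bar X^{t,x;\bar u}(r)\bigr)$ on $[s,s+h]$ using the adjoint equation (\ref{eq-p}): the conditional expectation kills the stochastic integrals, the drift of $p$ integrated against the $O(\sqrt h)$ forward increment is $O(h^{3/2})=o(h)$, the $pb$ piece yields $-p(s)b(s)\,h+o(h)$, and the quadratic covariation $\int_s^{s+h}q(r)\sigma(r)\,dr$ furnishes the nontrivial $-q(s)\sigma(s)\,h+o(h)$ contribution. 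Recalling that $\Delta(s)=0$ when $u=\bar u(s)$, so that $\mathcal{H}(s,\bar X^{t,x;\bar u}(s),\bar Y^{t,x;\bar u}(s),\bar Z^{t,x;\bar u}(s),\bar u(s),p(s),q(s),P(s)) = p(s)b(s) + q(s)\sigma(s) + g(s)$, collecting all the terms recovers the desired $\mathcal{H}_1$ bound.

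The main technical obstacle I anticipate is controlling the Peano remainder uniformly after the conditioning step: because it is only a pathwise $o(|\bar X^{t,x;\bar u}(s)-\bar X^{t,x;\bar u}(s+h)|^2)$, to conclude that its conditional expectation is $o(h)$ I must combine the fact that the remainder depends only on the size of the increment with the higher-moment estimate $\mathbb{E}[|\bar X^{t,x;\bar u}(s+h)-\bar X^{t,x;\bar u}(s)|^\beta\mid\mathcal{F}_s^t]\le C h^{\beta/2}$ for $\beta\in[2,8]$ from Theorem 2.2 of \cite{Hu-JX}, via a uniform integrability argument. A subsidiary obstacle is arranging the exceptional null set $\Omega_0$ so that the DPP flow identity, the spatial super-jet inclusion of Theorem \ref{th-visco-x} at every rational $s+h$, and boundedness of $(p,P,q,\bar Z)$ on $[t,T]$ all hold at once; the passage from rational $h$ to arbitrary $h\downarrow 0$ is then handled by continuity of $W$ and of the trajectory $\bar X^{t,x;\bar u}$.
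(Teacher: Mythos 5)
Your proposal is correct and follows essentially the same route as the paper: both decompose $W(\tau,\bar X^{t,x;\bar u}(s))-W(s,\bar X^{t,x;\bar u}(s))$ into the spatial increment of $W$ at the later time evaluated at $\bar X^{t,x;\bar u}(s)$ plus the increment of $\bar Y^{t,x;\bar u}$, and then extract $-g(s)h$ from the BSDE, $-p(s)b(s)h-q(s)\sigma(s)h$ from It\^o's product formula applied to $p(r)\hat\xi_\tau(r)$ with the adjoint equation \eqref{eq-p}, and the $\tfrac12 P(s)\sigma(s)^2 h$ term from the quadratic part, before closing with the standard $\liminf$ argument for the sub-jet. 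The only cosmetic difference is that you invoke the already-established super-jet inclusion of Theorem \ref{th-visco-x} at time $s+h$ at the random point $x'=\bar X^{t,x;\bar u}(s)$, whereas the paper reruns the variational machinery for the FBSDE restarted at $(\tau,\bar X^{t,x;\bar u}(s))$ and uses $W(\tau,\bar X^{t,x;\bar u}(s))\le\mathbb{E}[Y^{\tau,\bar X^{t,x;\bar u}(s);\bar u}(\tau)\,|\,\mathcal{F}^t_s]$; the technical point you flag (turning the pathwise Peano remainder into $o(h)$ after conditioning) is exactly the step the paper handles through the explicit conditional-moment bound on $\hat\varphi$ together with the $\beta\in[2,8]$ moment estimates.
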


\begin{proof}
The proof is divided into two steps.

\textbf{Step 1:} Variations and estimations for FBSDE.

For each $s\in(t,T)$, take $\tau\in(s,T].$ Denote by
\[
\Theta^{\tau,\bar{X}^{t,x;\bar{u}}(s);\bar{u}}(\cdot)=(X^{\tau,\bar
{X}^{t,x;\bar{u}}(s);\bar{u}}(\cdot),Y^{\tau,\bar{X}^{t,x;\bar{u}}(s);\bar{u}%
}(\cdot),Z^{\tau,\bar{X}^{t,x;\bar{u}}(s);\bar{u}}(\cdot))
\]
the solution to the following FBSDE on $[\tau,T]:$%
\[
\left\{
\begin{array}
[c]{rl}%
X^{\tau,X^{t,x;\bar{u}}(s);\bar{u}}(r)=&\bar{X}^{t,x;\bar{u}}(s)+\int_{\tau
}^{r}b(\alpha,\Theta^{\tau,\bar{X}^{t,x;\bar{u}}(s);\bar{u}}(\alpha),\bar
{u}(\alpha))d\alpha\\
&+\int_{\tau}^{r}\sigma(\alpha,\Theta^{\tau,\bar
{X}^{t,x;\bar{u}}(s);\bar{u}}(\alpha),\bar{u}(\alpha))dB(\alpha),\\
Y^{\tau,X^{t,x;\bar{u}}(s);\bar{u}}(r)=&\phi(X^{\tau,X^{t,x;\bar{u}}(s);\bar
{u}}(T))+\int_{r}^{T}g(\alpha,\Theta^{\tau,\bar{X}^{t,x;\bar{u}}(s);\bar{u}%
}(\alpha),\bar{u}(\alpha))d\alpha\\
&-\int_{r}^{T}Z^{\tau,\bar{X}^{t,x;\bar{u}%
}(s);\bar{u}}(\alpha)dB(\alpha).
\end{array}
\right.
\]
For $r\in\lbrack\tau,T]$, set
\[%
\begin{array}
[c]{cl}%
\hat{\xi}_{\tau}(r)= & X^{\tau,\bar{X}^{t,x;\bar{u}}(s);\bar{u}}(r)-\bar
{X}^{t,x;\bar{u}}(r),\ \hat{\eta}_{\tau}(r)=  Y^{\tau,\bar{X}^{t,x;\bar{u}}(s);\bar{u}}(r)-\bar
{Y}^{t,x;\bar{u}}(r),\\
\hat{\zeta}_{\tau}(r)= & Z^{\tau,\bar{X}^{t,x;\bar{u}}(s);\bar{u}}(r)-\bar
{Z}^{t,x;\bar{u}}(r),\  \hat{\Theta}_{\tau}(r)=  (\hat{\xi}_{\tau}(r),\hat{\eta}_{\tau}(r),\hat
{\zeta}_{\tau}(r)).
\end{array}
\]
Then, by Theorem 2.2 in \cite{Hu-JX}, we have that for each $\beta\in
\lbrack2,8]$
\begin{equation}
\begin{array}
[c]{ll}
\mathbb{E}\left[  \left.  \sup\limits_{r\in\lbrack\tau,T]}\left(  |\hat{\xi
}_{\tau}(r)|^{\beta}+|\hat{\eta}_{\tau}(r)|^{\beta}\right)  +\left(
\int_{\tau}^{T}|\hat{\zeta}_{\tau}(r)|^{2}dr\right)  ^{\frac{\beta}{2}%
}\right\vert \mathcal{F}_{\tau}^{t}\right]  \\
\leq C\left\vert \bar{X}%
^{t,x;\bar{u}}(\tau)-\bar{X}^{t,x;\bar{u}}(s)\right\vert ^{\beta},\  P \text{-}a.s.. \label{new-eq-123}
\end{array}
\end{equation}
Note that%
\[
\bar{X}^{t,x;\bar{u}}(\tau)-\bar{X}^{t,x;\bar{u}}(s)=\int_{s}^{\tau
}b(r)dr+\int_{s}^{\tau}\sigma(r)dB(r).
\]
Taking conditional expectation $\mathbb{E}\left[  \cdot|\mathcal{F}_{s}%
^{t}\right]  $ on both sides of (\ref{new-eq-123}), we obtain for a.e. $s\in\lbrack t,T]$,
\begin{equation}
\mathbb{E}\left[  \left.  \sup\limits_{r\in\lbrack\tau,T]}\left(  |\hat{\xi
}_{\tau}(r)|^{\beta}+|\hat{\eta}_{\tau}(r)|^{\beta}\right)  +\left(
\int_{\tau}^{T}|\hat{\zeta}_{\tau}(r)|^{2}dr\right)  ^{\frac{\beta}{2}%
}\right\vert \mathcal{F}_{s}^{t}\right]  \leq O ( \left\vert \tau-s\right\vert
^{\frac{\beta}{2}}),\;P\text{-}a.s.,\label{est-xi-hat-tau}%
\end{equation}
as $\tau \downarrow s $. We rewrite $\hat{\xi}%
_{\tau}(\cdot),$ $\hat{\eta}_{\tau}(\cdot)$ and $\hat{\zeta}_{\tau}(\cdot)$
as
\begin{equation}
\left\{
\begin{array}
[c]{rl}%
d\hat{\xi}_{\tau}(r)= & \left[  \hat{\Theta}_{\tau}(r)Db(r)+\varepsilon
_{\tau1}(r)\right]  dr+\left[  \hat{\Theta}_{\tau}(r)D\sigma(r)+\varepsilon
_{\tau2}(r)\right]  dB(r),\\
\hat{\xi}_{\tau}(\tau)= & -\int_{s}^{\tau}b(r)dr-\int_{s}^{\tau}%
\sigma(r)dB(r),\\
d\hat{\eta}_{\tau}(r)= & -\left[  \hat{\Theta}_{\tau}(r)Dg(r)+\varepsilon
_{\tau3}(r)\right]  dr+\hat{\zeta}_{\tau}(r)dB(r),\text{ }r\in\lbrack
\tau,T],\\
\hat{\eta}_{\tau}(T)= & \phi_{x}(\bar{X}^{t,x;\bar{u}}(T))\hat{\xi}_{\tau
}(T)+\varepsilon_{\tau4}(T),
\end{array}
\right.
\end{equation}
where%
\[%
\begin{array}
[c]{ll}%
\varepsilon_{\tau1}(r)= \left(  \tilde{b}_{x}(r)-b_{x}%
(r)\right)  \hat{\xi}_{\tau}(r)+\left(  \tilde{b}_{y}
(r)-b_{y}(r)\right)  \hat{\eta}_{\tau}(r)+\left(  \tilde{b}_{z}
(r)-b_{z}(r)\right)  \hat{\zeta}_{\tau}(r),\\
\varepsilon_{\tau2}(r)= \left(  \tilde{\sigma}_{x}
(r)-\sigma_{x}(r)\right)  \hat{\xi}_{\tau}(r)+\left(  \tilde{\sigma}%
_{y}(r)-\sigma_{y}(r)\right)  \hat{\eta}_{\tau}(r)+\left(
\tilde{\sigma}_{z}(r)-\sigma_{z}(r)\right)  \hat{\zeta}_{\tau
}(r),\\
\varepsilon_{\tau3}(r)= \left(  \tilde{g}_{x}(r)-g_{x}(r)\right)
\hat{\xi}_{\tau}(r)+\left(  \tilde{g}_{y}(r)-g_{y}(r)\right)
\hat{\eta}_{\tau}(r)+\left(  \tilde{g}_{z}(r)-g_{z}(r)\right)
\hat{\zeta}_{\tau}(r),\\
\varepsilon_{\tau4}(T)=[\tilde{\phi}_{x}(\bar{X}^{t,x;\bar{u}%
}(T))-\phi_{x}(\bar{X}^{t,x;\bar{u}}(T))]\hat{\xi}_{\tau}(T),\\
\tilde{\psi}_{\kappa}(r)=\int_{0}^{1}  \psi_{\kappa
}(r,\bar{\Theta}^{t,x;\bar{u}}(r)+\lambda\hat{\Theta}(r),\bar{u}%
(r)) d\lambda \
\text{ for }\psi=b,\sigma,g,\phi\text{
and }\kappa=x,y,z.
\end{array}
\]
Similar to the proof in Theorem \ref{th-visco-x}, we obtain%
\[
Y^{\tau,X^{t,x;\bar{u}}(s);\bar{u}}(\tau)-\bar{Y}^{t,x;\bar{u}}(\tau
)=p(\tau)\hat{\xi}_{\tau}(\tau)+\frac{1}{2}P(\tau)\hat{\xi}_{\tau}(\tau
)^{2}+o(| \hat{\xi}_{\tau}(\tau)| ^{2}),\;P\text{-}a.s.,
\]
which implies   for a.e. $s\in\lbrack t,T]$,
\[
\mathbb{E}\left[  \left.  Y^{\tau,X^{t,x;\bar{u}}(s);\bar{u}}(\tau)-\bar
{Y}^{t,x;\bar{u}}(\tau)\right\vert \mathcal{F}_{s}^{t}\right]  =\mathbb{E}%
\left[  p(\tau)\hat{\xi}_{\tau}(\tau)+\frac{1}{2}P(\tau)\hat{\xi}_{\tau}%
(\tau)^{2}|\mathcal{F}_{s}^{t}\right]  +o\left(  \left\vert \tau-s\right\vert
\right)  ,\;
\]
 $P$\text{-}a.s. as $\tau\downarrow s$.

\textbf{Step 2: }Completion of the proof.

By Proposition 3.5 in \cite{Hu-JX-DPP}, we get%
\begin{equation}
W(\tau,X^{t,x;\bar{u}}(s))\leq\mathbb{E}\left[  Y^{\tau,X^{t,x;\bar{u}%
}(s);\bar{u}}(\tau)|\mathcal{F}_{s}^{t}\right]  ,\;P\text{-}%
a.s..\label{W-less-Y}%
\end{equation}
Similar to Theorem \ref{th-visco-x}, we can find a subset $\Omega_{0}\subseteq\Omega$ with $P(\Omega_{0})=1$
such that for any $\omega_{0}\in\Omega_{0}$,
\[
\left\{
\begin{array}
[c]{l}%
W(s,\bar{X}^{t,x;\bar{u}}(s,\omega_{0}))=\bar{Y}^{t,x;\bar{u}}(s,\omega
_{0}),\text{ (\ref{est-xi-hat-tau}), (\ref{W-less-Y}) are satisfied for any
rational } \\
 \tau>s\text{, }
(\Omega,\mathcal{F},P(\cdot|\mathcal{F}_{s}^{t})\left(  \omega_{0}\right)
,B(\cdot)-B(s);u(\cdot))|_{[s,T]}\in\mathcal{U}^{w}[s,T],\ \text{and
}\\
\underset{s\leq r\leq T}{\sup}\left[  \left\vert p\left(  r,\omega
_{0}\right)  \right\vert +\left\vert P(r,\omega_{0})\right\vert \right]
<\infty.
\end{array}
\right.
\]
The first relation of the above is a directly application of the DPP (see Theorem 3.6 in
\cite{Hu-JX-DPP}). Let $\omega_{0}\in\Omega_{0}$ be fixed. Then, for any
rational number $\tau>s$ and for a.e. $s\in\lbrack t,T)$
\begin{equation}%
\begin{array}
[c]{rl}%
&W(\tau,\bar{X}^{t,x;\bar{u}}(s,\omega_{0}))-W(s,\bar{X}^{t,x;\bar{u}}%
(s,\omega_{0}))\\
& \leq \mathbb{E}\left[  Y^{\tau,X^{t,x;\bar{u}}(s);\bar{u}%
}(\tau)-\bar{Y}^{t,x;\bar{u}}(s)|\mathcal{F}_{s}^{t}\right]  (\omega_{0})\\
&=  \mathbb{E}\left[  Y^{\tau,X^{t,x;\bar{u}}(s);\bar{u}}(\tau)-\bar
{Y}^{t,x;\bar{u}}(\tau)+\bar{Y}^{t,x;\bar{u}}(\tau)-\bar{Y}^{t,x;\bar{u}%
}(s)|\mathcal{F}_{s}^{t}\right]  (\omega_{0})\\
&= \mathbb{E}\left[  p(\tau)\hat{\xi}_{\tau}(\tau)+\frac{1}{2}P(\tau)\hat
{\xi}_{\tau}(\tau)^{2}-\int_{s}^{\tau}g(r)dr|\mathcal{F}_{s}^{t}\right]
(\omega_{0})+o(\left\vert \tau-s\right\vert ),
\end{array}
\label{eq-new-121}%
\end{equation}
as $\tau\downarrow s$. Next we estimate the terms
on the right hand side of (\ref{eq-new-121}).%
\begin{equation}%
\begin{array}
[c]{cl}%
\mathbb{E}\left[  p(\tau)\hat{\xi}_{\tau}(\tau)|\mathcal{F}_{s}^{t}\right]
(\omega_{0}) & =\mathbb{E}\left[  p(s)\hat{\xi}_{\tau}(\tau)+(p(\tau)-p\left(
s\right)  )\hat{\xi}_{\tau}(\tau)|\mathcal{F}_{s}^{t}\right]  (\omega_{0})\\
& =\mathbb{E}\left[  -p(s)\int_{s}^{\tau}b(r)dr-\int_{s}^{\tau}q(r)\sigma
(r)dr|\mathcal{F}_{s}^{t}\right]  (\omega_{0})+o(\left\vert \tau-s\right\vert
),
\end{array}
\label{eq-new-122}%
\end{equation}
where the last equality is due to the It\^{o}'s formula for $(p(\tau)-p\left(
s\right)  )\hat{\xi}_{\tau}(\tau)$. Similarly,
\begin{equation}%
\begin{array}
[c]{cc}%
\mathbb{E}\left[  \frac{1}{2}P(\tau)\hat{\xi}_{\tau}(\tau)^{2}|\mathcal{F}%
_{s}^{t}\right]  (\omega_{0}) & =\mathbb{E}\left[  \frac{1}{2}P(s)\int%
_{s}^{\tau}\sigma(r)^{2}dr|\mathcal{F}_{s}^{t}\right]  (\omega_{0}%
)+o(\left\vert \tau-s\right\vert ).
\end{array}
\label{eq-new-123}%
\end{equation}
Thus, by (\ref{eq-new-121})-(\ref{eq-new-123}) and the continuity of $W$, we
obtain%
\[%
\begin{array}
[c]{l}%
W(\tau,\bar{X}^{t,x;\bar{u}}(s))-W(s,\bar{X}^{t,x;\bar{u}}(s))\\
\leq\mathbb{E}\left[  -p(s)\int_{s}^{\tau}b(r)dr-\int_{s}^{\tau}%
q(r)\sigma(r)dr-\int_{s}^{\tau}g(r)dr+\frac{1}{2}P(s)\int_{s}^{\tau}%
\sigma(r)^{2}dr|\mathcal{F}_{s}^{t}\right] \\
\ \  +o(\left\vert \tau-s\right\vert
)\\
=(\tau-s)\mathcal{H}_{1}(s,\bar{X}^{t,x;\bar{u}}(s),\bar{Y}^{t,x;\bar{u}%
}(s),\bar{Z}^{t,x;\bar{u}}(s))+o(\left\vert \tau-s\right\vert ),
\end{array}
\]
which implies
\[
\lbrack\mathcal{H}_{1}(s,\bar{X}^{t,x;\bar{u}}(s),\bar{Y}^{t,x;\bar{u}%
}(s),\bar{Z}^{t,x;\bar{u}}(s)),\infty)\subseteq D_{t+}^{1,+}W(s,X^{t,x;\bar
{u}}(s))
\]
by the definition of super-jets. For any $\hat{q}\in D_{t+}^{1,-}W(s,\bar
{X}^{t,x;\bar{u}}(s))$, by definition of sub-jets, we have%
\[%
\begin{array}
[c]{rl}%
0&\leq\underset{\tau\downarrow s}{\liminf}\left\{  \frac{V(\tau,\bar
{X}^{t,x;\bar{u}}(s))-V(s,\bar{X}^{t,x;\bar{u}}(s))-\hat{q}(\tau-s)}{\tau
-s}\right\}  \\
&\leq\underset{\tau\downarrow s}{\lim\inf}\left\{  \mathcal{H}_{1}(s,\bar
{X}^{t,x;\bar{u}}(s),\bar{Y}^{t,x;\bar{u}}(s),\bar{Z}^{t,x;\bar{u}}%
(s))-\hat{q}\right\}  .
\end{array}
\]
Thus
\[
\hat{q}\leq\mathcal{H}_{1}(s,\bar{X}^{t,x;\bar{u}}(s),\bar{Y}^{t,x;\bar{u}%
}(s),\bar{Z}^{t,x;\bar{u}}(s)),\text{ }\forall s\in\lbrack t,T),P\text{-}a.s..
\]
This completes the proof.
\end{proof}

\section{Special cases}

In this section, we study three special cases. In the first case, the value
function $W$ is supposed to be smooth. In the second case, the diffusion term
$\sigma$ of the forward stochastic differential equation in (\ref{state-eq})
is linear in $z$. Finally, we study the case in which the control
domain is convex and compact.

\subsection{The smooth case}

In this subsection, we assume that the value function $W$ is smooth and obtain
the relationship between the derivatives of $W$ and the adjoint processes.
Note that the HJB equation includes an algebra equation (\ref{eq-hjb}). It is
worth pointing out that we discover two novel connections: (i) the relation between the derivatives of $V(\cdot)$ and
the terms $K_{1}(\cdot)$, $K_{2}(\cdot)$ in the adjoint equations; (ii) the relation between the algebra equation $\Delta(\cdot)$ for the MP and the algebra equation $V(\cdot)$ for the HJB equation.
We first give the following stochastic verification theorem.

\begin{theorem}
Suppose Assumption \ref{assum-1} holds. Let
$w(t,x)$ belong to  $ C_{b}^{1,2}([0,T]\times \mathbb{R})$ and be a solution to the HJB
equation (\ref{eq-hjb}). If $||\sigma||_{\infty}<\infty$ and $||w_{x}%
||_{\infty}||\sigma_{z}||_{\infty}<1$, then%
\[
w(t,x)\leq J(t,x;u(\cdot)),\text{ }\forall u(\cdot)\in\mathcal{U}%
^{w}[t,T],(t,x)\in\lbrack0,T]\times\mathbb{R}.
\]
Furthermore, if $\bar{u}(\cdot)\in\mathcal{U}^{w}[t,T]$ such that%
\begin{equation*}
\begin{array}[c]{l}
G(s,X^{t,x;\bar{u}}(s),w(s,X^{t,x;\bar{u}}(s)),w_{x}(s,X^{t,x;\bar{u}%
}(s),w_{xx}(s,X^{t,x;\bar{u}}(s)),\bar{u}(s))\\
+w_{s}(s,X^{t,x;\bar{u}}(s))=0,
\end{array}
\end{equation*}
where $(X^{t,x;\bar{u}}(\cdot),Y^{t,x;\bar{u}}(\cdot),Z^{t,x;\bar{u}}(\cdot))$
is the solution to FBSDE (\ref{state-eq}) corresponding to $\bar{u}(\cdot)$,
then $\bar{u}(\cdot)$ is an optimal control.
\end{theorem}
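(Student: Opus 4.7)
The plan is to apply It\^o's formula to the smooth function $w(s, X^{t,x;u}(s))$ along the state trajectory associated with an arbitrary admissible control $u(\cdot)\in\mathcal{U}^{w}[t,T]$, and to compare the resulting process with the cost process $Y^{t,x;u}(\cdot)$ via a one-dimensional BSDE argument. Concretely, set $\delta Y(s) := w(s, X^{t,x;u}(s)) - Y^{t,x;u}(s)$ and $\delta Z(s) := w_x(s, X^{t,x;u}(s))\sigma(s,X^{t,x;u},Y^{t,x;u},Z^{t,x;u},u) - Z^{t,x;u}(s)$. Since $w(T,X^{t,x;u}(T)) = \phi(X^{t,x;u}(T)) = Y^{t,x;u}(T)$, the pair $(\delta Y, \delta Z)$ solves a BSDE on $[t,T]$ with zero terminal condition and driver
\begin{equation*}
f^{\delta}(s) = -\bigl[w_s + w_x\, b + \tfrac{1}{2} w_{xx}\,\sigma^{2} + g\bigr](s,X^{t,x;u},Y^{t,x;u},Z^{t,x;u},u(s)).
\end{equation*}
The goal is then to show $\delta Y(t) \leq 0$, which is precisely $w(t,x)\leq J(t,x;u(\cdot))$.

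The key obstacle is that the HJB inequality $w_s + G(s,X,w,w_x,w_{xx},u) \geq 0$ is stated in terms of $V = V(s,X,w,w_x,u)$ solving the algebra equation $V = w_x\,\sigma(s,X,w,V,u)$, whereas $f^{\delta}$ involves coefficients evaluated at $(Y^{t,x;u},Z^{t,x;u})$, which in general do not equal $(w,V)$. I would bridge this gap via a mean-value expansion: Assumption \ref{assum-1} produces bounded coefficients $\bar b_y,\bar b_z,\ldots,\bar g_z$ such that $\psi(\cdot,Y,Z,u) - \psi(\cdot,w,V,u) = \bar\psi_y (Y-w) + \bar\psi_z (Z-V)$ for $\psi = b, \sigma^{2}, g$. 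Noting that $Y-w=-\delta Y$ and applying a similar expansion inside the algebra equation, one solves the resulting linear identity to obtain the decoupling relation
\begin{equation*}
Z - V = (1 - w_x\,\bar\sigma_z)^{-1}\bigl[-\delta Z - w_x\,\bar\sigma_y\,\delta Y\bigr],
\end{equation*}
where the inverse exists and is uniformly bounded thanks to the hypothesis $\|w_x\|_{\infty}\|\sigma_z\|_{\infty}<1$. Inserting these expansions together with the HJB inequality yields $f^{\delta}(s) \leq A(s)\,\delta Y(s) + B(s)\,\delta Z(s)$ for some bounded, adapted processes $A(\cdot),B(\cdot)$.

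To close the argument, compare $(\delta Y,\delta Z)$ with the trivial solution of the linear BSDE $-d\tilde Y = (A\tilde Y + B\tilde Z)\,ds - \tilde Z\,dB,\ \tilde Y(T)=0$, whose unique solution is $\tilde Y\equiv 0$; the classical comparison theorem for linear BSDEs with bounded coefficients then forces $\delta Y(s)\leq 0$ on $[t,T]$, and the condition $\|\sigma\|_{\infty}<\infty$ is used here to ensure that the relevant stochastic integrals are true martingales so that the comparison is rigorously justified. This yields $w(t,x)\leq J(t,x;u(\cdot))$ for every admissible $u(\cdot)$. For the second assertion, when $\bar u(\cdot)$ satisfies the pointwise equality condition along its own trajectory, the HJB becomes an equality, so the driver $f^{\delta}$ reduces exactly to $A(s)\delta Y + B(s)\delta Z$ (no residual nonpositive term), and the resulting homogeneous linear BSDE with zero terminal has only the zero solution. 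Therefore $w(t,x) = Y^{t,x;\bar u}(t) = J(t,x;\bar u(\cdot))$, which combined with the first part identifies $\bar u(\cdot)$ as an optimal control. The main difficulty throughout lies in the fully coupled structure: the decoupling relation for $Z-V$ and the smallness hypothesis $\|w_x\|_{\infty}\|\sigma_z\|_{\infty}<1$ are exactly what allow the HJB, which is formulated at the auxiliary pair $(w,V)$, to be transformed into a usable pointwise estimate at the actual dynamics $(Y,Z)$.
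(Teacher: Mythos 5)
Your argument is correct and is essentially the verification-theorem proof the paper itself invokes (the paper omits the details, deferring to Theorem 4.12 of \cite{Hu-JX-DPP}): It\^o's formula applied to $w(s,X^{t,x;u}(s))$, a mean-value linearization of the coefficient differences between $(Y^{t,x;u},Z^{t,x;u})$ and $(w,V)$, the decoupling identity for $Z-V$ made possible by $\|w_x\|_\infty\|\sigma_z\|_\infty<1$, and a one-sided linear BSDE comparison, with equality along $\bar u(\cdot)$ for the second assertion. The only cosmetic point is that $\|\sigma\|_\infty<\infty$ is needed primarily so that the incremental coefficient coming from the $\tfrac12 w_{xx}\sigma^2$ term in your expansion is bounded (hence $B(\cdot)$ is bounded), rather than only for the martingale property of the stochastic integrals.
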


\begin{proof}
The proof is same to Theorem 4.12 in \cite{Hu-JX-DPP}, thus we omit it.
\end{proof}

Now we study the relationship between the derivatives of the value function
$W$ and the adjoint processes.

\begin{theorem}
\label{new-th-11}Let Assumptions \ref{assum-1}, \ref{assum-2} and
\ref{assum-3} hold. Suppose that $\bar{u}(\cdot)\in\mathcal{U}^{w}[t,T]$ is an
optimal control, and $(\bar{X}^{t,x;\bar{u}}(\cdot),\bar{Y}^{t,x;\bar{u}%
}(\cdot),\bar{Z}^{t,x;\bar{u}}(\cdot))$ is the corresponding optimal state.
Let $(p(\cdot),q(\cdot))$ be the solution to (\ref{eq-p}). If the value
function $W(\cdot,\cdot)\in C^{1,2}([t,T]\times\mathbb{R})$, then for each $s\in\lbrack t,T]$
\[
\bar{Y}^{t,x;\bar{u}}(s)=W(s,\bar{X}^{t,x;\bar{u}}(s)),\]  \[\bar
{Z}^{t,x;\bar{u}}(s)=V(s,\bar{X}^{t,x;\bar{u}}(s),W(s,\bar{X}^{t,x;\bar{u}}(s)),W_x(s,\bar{X}^{t,x;\bar{u}}(s)),\bar{u}(s))
\]
and
\[%
\begin{array}
[c]{rl}%
&-W_{s}(s,\bar{X}^{t,x;\bar{u}}(s))\\
& =G(s,\bar{X}^{t,x;\bar{u}}(s),W(s,\bar
{X}^{t,x;\bar{u}}(s)),W_{x}(s,\bar{X}^{t,x;\bar{u}}(s)),W_{xx}(s,\bar
{X}^{t,x;\bar{u}}(s)),\bar{u}(s))\\
& =\min\limits_{u\in U}G\left(  s,\bar{X}^{t,x;\bar{u}}(s),W(s,\bar
{X}^{t,x;\bar{u}}(s)),W_{x}(s,\bar{X}^{t,x;\bar{u}}(s)),W_{xx}(s,\bar
{X}^{t,x;\bar{u}}(s)),u\right).
\end{array}
\]
Moreover, if $W(\cdot,\cdot)\in C^{1,3}([t,T]\times\mathbb{R})$ and $W_{sx}(\cdot,\cdot),$
$W_{sxx}(\cdot,\cdot)$ are continuous, then, for $s\in\lbrack t,T]$,%
\[%
\begin{array}
[c]{rl}%
p(s)= & W_{x}(s,\bar{X}^{t,x;\bar{u}}(s)),\\
q(s)= & W_{xx}(s,\bar{X}^{t,x;\bar{u}}(s))\sigma(s,\bar{X}^{t,x;\bar{u}%
}(s),\bar{Y}^{t,x;\bar{u}}(s),\bar{Z}^{t,x;\bar{u}}(s),\bar{u}(s)).
\end{array}
\]
Furthermore, if $W(\cdot,\cdot)\in C^{1,4}([t,T]\times\mathbb{R})$ and
$W_{sxx}(\cdot,\cdot)$ is continuous, then
\[
P(s)\geq W_{xx}(s,\bar{X}^{t,x;\bar{u}}(s))\text{, }s\in\lbrack t,T],
\]
where $(P(\cdot),Q(\cdot))$ satisfies (\ref{eq-P}).
\end{theorem}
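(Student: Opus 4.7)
The plan is to proceed in three layers of increasing regularity, each time applying It\^o's formula to a derivative of $W$ along the optimal trajectory $\bar X(s):=\bar X^{t,x;\bar u}(s)$ and reading off identifications by matching drift and diffusion coefficients.

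First, the identity $\bar Y^{t,x;\bar u}(s)=W(s,\bar X(s))$ follows from the DPP applied to the optimal $\bar u(\cdot)$, exactly as already used in Step~5 of the proof of Theorem \ref{th-visco-x}. Assuming $W\in C^{1,2}$, apply It\^o's formula to $W(s,\bar X(s))$:
\[
dW(s,\bar X(s))=\Bigl[W_s+W_x\,b(s)+\tfrac12 W_{xx}\,\sigma(s)^2\Bigr]ds+W_x\,\sigma(s)\,dB(s),
\]
and compare with $d\bar Y(s)=-g(s)\,ds+\bar Z(s)\,dB(s)$. Matching the $dB$-coefficients and using $\bar Y=W$ gives $\bar Z(s)=W_x(s,\bar X(s))\sigma(s,\bar X(s),W,\bar Z(s),\bar u(s))$, which is precisely the algebra equation (\ref{eq-V}) at the arguments $(s,\bar X,W,W_x,\bar u)$; its unique solution (granted by $\|W_x\|_\infty\|\sigma_z\|_\infty<1$ from the verification assumption together with the admissibility conditions) is $V$, so $\bar Z(s)=V(s,\bar X(s),W,W_x,\bar u(s))$. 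Matching the drifts gives $W_s+G(s,\bar X,W,W_x,W_{xx},\bar u(s))=0$, which is $\ge 0$ by the HJB equation with $\inf_u$, and the equality forces $\bar u(s)$ to attain the infimum.

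Second, for the adjoint processes I use two complementary viewpoints. On the one hand, since $W\in C^{1,2}$, the second-order superjet satisfies $D_x^{2,+}W(s,\bar X(s))=\{W_x(s,\bar X(s))\}\times[W_{xx}(s,\bar X(s)),\infty)$, so Theorem \ref{th-visco-x} immediately yields both $p(s)=W_x(s,\bar X(s))$ and $P(s)\ge W_{xx}(s,\bar X(s))$. On the other hand, under $W\in C^{1,3}$ with continuous $W_{sx},W_{sxx}$, I apply It\^o's formula to the already-identified process $p(s)=W_x(s,\bar X(s))$. Comparing its $dB$-coefficient with the Brownian coefficient $q(s)$ in the adjoint equation (\ref{eq-p}) gives $q(s)=W_{xx}(s,\bar X(s))\sigma(s)$. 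Comparing drifts provides a consistency check that agrees with differentiating the HJB equation once in $x$ and evaluating along $\bar X(s)$, which in turn will yield the formula $K_1(s)=\partial_x V(s,\bar X,W,W_x,\bar u)$ announced in (\ref{connection-algebra}) once $q=W_{xx}\sigma$ is plugged into the definition (\ref{new-k1}).

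Third, for the refined $C^{1,4}$ statement, the idea is to apply It\^o's formula to $W_{xx}(s,\bar X(s))$ and compare the resulting BSDE with the second-order adjoint equation (\ref{eq-P}). Both processes have terminal value $\phi_{xx}(\bar X(T))$. Differentiating the HJB equation twice in $x$ along the optimal trajectory, using $p=W_x$, $q=W_{xx}\sigma$, and differentiating the algebra equation (\ref{eq-V}) implicitly (this is where $\partial_{xx}^2 V$ enters and matches $\tilde K_2$ in (\ref{new-k2}), yielding the second line of (\ref{connection-algebra})), gives a linear BSDE for $W_{xx}(s,\bar X(s))$ with an explicit driver. The optimality of $\bar u$ implies a one-sided inequality between this driver and that of (\ref{eq-P}) (twice-differentiating an infimum produces a non-positive correction term), and a BSDE comparison theorem then yields $P(s)\ge W_{xx}(s,\bar X(s))$.

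The main obstacle is the third paragraph. The driver of (\ref{eq-P}) involves $K_2$, itself defined implicitly through (\ref{new-k2}), which entangles $P$, $Q$, and the second derivatives of $\sigma$ via the same $(1-p\sigma_z)^{-1}$ factor that produces $V$. Making the comparison rigorous requires carefully differentiating the algebra equation twice in $x$ along $\bar X(s)$, identifying $\partial_{xx}^2 V$ with $\tilde K_2$, and showing that the second variation induced by the HJB infimum is non-positive at $\bar u$, so that the driver of $W_{xx}(s,\bar X(s))$ lies below the driver of (\ref{eq-P}). The smoothness hypothesis $W\in C^{1,4}$ with continuous $W_{sxx}$ is precisely what is needed to legitimize both the It\^o expansion and the twofold $x$-differentiation of the HJB equation.
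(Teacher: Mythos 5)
Your proposal follows essentially the same route as the paper: identify $\bar Y^{t,x;\bar u}(s)=W(s,\bar X^{t,x;\bar u}(s))$ from the DPP, apply It\^o's formula to $W$, $W_{x}$ and $W_{xx}$ along the optimal trajectory, exploit that $x\mapsto W_{s}(s,x)+G(s,x,W,W_{x},W_{xx},\bar u(s))$ is nonnegative and vanishes at $x=\bar X^{t,x;\bar u}(s)$ (first- and second-order conditions at a minimum), use the implicit function theorem on the algebra equation, and finish with a BSDE comparison between the equation satisfied by $W_{xx}(s,\bar X^{t,x;\bar u}(s))$ and \eqref{eq-P}. The one place where you genuinely deviate is identifying $p(s)=W_{x}$ and $P(s)\ge W_{xx}$ through the superjet inclusions of Theorem \ref{th-visco-x} together with the fact that $D_{x}^{2,+}W(s,\hat x)=\{W_{x}(s,\hat x)\}\times[W_{xx}(s,\hat x),\infty)$ for $W$ that is $C^{2}$ in $x$. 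That shortcut is clean, but it silently imports the extra hypothesis of Theorem \ref{th-visco-x} that $q(\cdot)$ is bounded, which Theorem \ref{new-th-11} does not assume; the paper instead shows directly that the pair $\left(W_{x}(s,\bar X^{t,x;\bar u}(s)),\,W_{xx}(s,\bar X^{t,x;\bar u}(s))\sigma(s)\right)$ solves the first-order adjoint BSDE \eqref{eq-p} — combining \eqref{eq-W-x}, the vanishing first derivative \eqref{eq-new-1211} and the expression \eqref{eq-V_x} for $\partial_{x}V$ — and invokes uniqueness, and it obtains $P\ge W_{xx}$ solely from the comparison theorem. So your ``second viewpoint'' (It\^o plus matching with the adjoint equation) should be the primary argument rather than a consistency check; note also that uniqueness of the solution to the algebra equation here comes from Assumption \ref{assum-3} and the a priori bound on $p$, not from the hypotheses of the verification theorem. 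For the final inequality your plan coincides with the paper's: the driver inequality \eqref{new-erda11} is exactly the statement that the second $x$-derivative of $W_{s}+G(\cdot,\bar u(s))$ is nonnegative at $\bar X^{t,x;\bar u}(s)$, with $\partial^{2}_{xx}V$ identified with $\tilde K_{2}$ of \eqref{K2-til}; you flag this as the main obstacle but correctly name the mechanism that resolves it.
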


\begin{proof}
By the DPP (see Theorem 3.6 in \cite{Hu-JX-DPP}), we get $\bar{Y}^{t,x;\bar{u}}(s)=W(s,\bar
{X}^{t,x;\bar{u}}(s))$ $s\in\lbrack t,T]$. Applying It\^{o}'s formula to
$W(s,\bar{X}^{t,x;\bar{u}}(s))$, we can get%
\begin{equation}%
\begin{array}
[c]{l}%
\bar{Y}^{t,x;\bar{u}}(s)=W(s,\bar{X}^{t,x;\bar{u}}(s)),\\ \bar
{Z}^{t,x;\bar{u}}(s)=V(s,\bar{X}^{t,x;\bar{u}}(s),W(s,\bar{X}^{t,x;\bar{u}}(s)),W_x(s,\bar{X}^{t,x;\bar{u}}(s)),\bar{u}(s)),\\
G(s,\bar{X}^{t,x;\bar{u}}(s),W(s,\bar
{X}^{t,x;\bar{u}}(s)),W_{x}(s,\bar{X}^{t,x;\bar{u}}(s)),W_{xx}(s,\bar
{X}^{t,x;\bar{u}}(s)),\bar{u}(s))\\
+W_{s}(s,\bar{X}^{t,x;\bar{u}}(s))=0.
\end{array}
\label{new-asd-1323}%
\end{equation}
Since $W$ satisfies the HJB equation (\ref{eq-hjb}), we obtain that, for each
$u\in U$,%
\begin{equation*}
\begin{array}[c]{ll}
G\left(  s,\bar{X}^{t,x;\bar{u}}%
(s),W(s,\bar{X}^{t,x;\bar{u}}(s)),W_{x}(s,\bar{X}^{t,x;\bar{u}}(s)),W_{xx}%
(s,\bar{X}^{t,x;\bar{u}}(s)),u\right)\\
+ W_{s}(s,\bar{X}^{t,x;\bar{u}}(s)) \geq0.
\end{array}
\end{equation*}
Thus we deduce%
\begin{equation}%
\begin{array}
[c]{l}%
G(s,\bar{X}^{t,x;\bar{u}}(s),W(s,\bar{X}^{t,x;\bar{u}}(s)),W_{x}(s,\bar
{X}^{t,x;\bar{u}}(s)),W_{xx}(s,\bar{X}^{t,x;\bar{u}}(s)),\bar{u}(s))\\
=\min\limits_{u\in U}G\left(  s,\bar{X}^{t,x;\bar{u}}(s),W(s,\bar{X}%
^{t,x;\bar{u}}(s)),W_{x}(s,\bar{X}^{t,x;\bar{u}}(s)),W_{xx}(s,\bar
{X}^{t,x;\bar{u}}(s)),u\right)  .
\end{array}
\label{neq-dfds-11}%
\end{equation}

If $W(\cdot,\cdot)\in C^{1,3}([t,T]\times\mathbb{R})$ and $W_{sx}(\cdot
,\cdot)$ is continuous, then, by applying It\^{o}'s formula to $W_{x}%
(s,\bar{X}^{t,x;\bar{u}}(s))$, we get%
\begin{equation}%
\begin{array}
[c]{ll}%
&dW_{x}(s,\bar{X}^{t,x;\bar{u}}(s))\\
&=  \left\{  W_{sx}(s,\bar{X}^{t,x;\bar{u}%
}(s))+W_{xx}(s,\bar{X}^{t,x;\bar{u}}(s))b(s)+\frac{1}{2}W_{xxx}(s,\bar
{X}^{t,x;\bar{u}}(s))(\sigma(s))^{2}\right\}  ds\\
&\ \  +W_{xx}(s,\bar{X}^{t,x;\bar{u}}(s))\sigma(s)dB(s).
\end{array}
\label{eq-W-x}%
\end{equation}
Note that $W$ satisfies the HJB equation (\ref{eq-hjb}). Then we obtain
\begin{equation}
W_{s}(s,x)+G(s,x,W(s,x),W_{x}(s,x),W_{xx}(s,x),\bar{u}(s))\geq
0.\label{new-asd-1322}%
\end{equation}
Combining (\ref{new-asd-1323})\ and (\ref{new-asd-1322}), we conclude that the
function $$W_{s}(s,\cdot)+G(s,\cdot,W(s,\cdot),W_{x}(s,\cdot),W_{xx}%
(s,\cdot),\bar{u}(s))$$ achieves its minimum at $x=\bar{X}^{t,x;\bar{u}}(s)$.
Thus%
\begin{equation}
\left.  \frac{\partial}{\partial x}(W_{s}(s,x)+G(s,x,W(s,x),W_{x}(s,x),W_{xx}(s,x),\bar
{u}(s)))\right\vert _{x=\bar{X}^{t,x;\bar{u}}(s)}=0.\label{eq-new-1211}%
\end{equation}
By the implicit function theorem, we deduce
\begin{equation}%
\begin{array}
[c]{l}%
\left. \frac{\partial V}{\partial x}(s,x,W(s,x),W_x(s,x),\bar{u}(s))\right\vert _{x=\bar{X}^{t,x;\bar{u}}(s)}\\
=\left(  1-W_{x}(s,\bar{X}^{t,x;\bar{u}}(s))\sigma_{z}(s)\right)  ^{-1}%
[W_{xx}(s,\bar{X}^{t,x;\bar{u}}(s))\sigma(s)+W_{x}(s,\bar{X}^{t,x;\bar{u}%
}(s))\sigma_{x}(s)\\
\ \ +\sigma_{y}(s)(W_{x}(s,\bar{X}^{t,x;\bar{u}}(s)))^{2}].
\end{array}
\label{eq-V_x}%
\end{equation}
Thus, we can easily get
\begin{equation}%
\begin{array}
[c]{l}%
\left.  \frac{\partial}{\partial x}(W_{s}(s,x)+G(s,x,W(s,x),W_{x}(s,x),W_{xx}(s,x),\bar
{u}(s)))\right\vert _{x=\bar{X}^{t,x;\bar{u}}(s)}\\
=W_{sx}(s,\bar{X}^{t,x;\bar{u}}(s))+W_{xx}(s,\bar{X}^{t,x;\bar{u}%
}(s))b(s)+\frac{1}{2}W_{xxx}(s,\bar{X}^{t,x;\bar{u}}(s))\sigma(s)^{2}\\
\text{ }+W_{x}(s,\bar{X}^{t,x;\bar{u}}(s))[b_{x}(s)+b_{y}(s)W_{x}(s,\bar
{X}^{t,x;\bar{u}}(s))+b_{z}(s)V_x(s)]\\
\text{ }+W_{xx}(s,\bar{X}^{t,x;\bar{u}}(s))\sigma(s)\left[\sigma_{x}(s)+\sigma
_{y}(s)W_{x}(s,\bar{X}^{t,x;\bar{u}}(s))+\sigma_{z}(s)V_x(s)\right]\\
\text{ }+g_{x}(s)+g_{y}(s)W_{x}(s,\bar{X}^{t,x;\bar{u}}(s))+g_{z}(s)V_x(s),
\end{array}
\label{deri-hjb}%
\end{equation}
where $$V_x(s)=\left. \frac {\partial V}{\partial x}(s,x,W(s,x),W_x(s,x),\bar{u}(s))\right\vert _{x=\bar{X}^{t,x;\bar{u}}(s)}.$$
Combining (\ref{eq-W-x}), (\ref{eq-new-1211}) and (\ref{deri-hjb}), it is easy to
check that $$\left(  W_{x}(s,\bar{X}^{t,x;\bar{u}}(s)),W_{xx}(s,\bar
{X}^{t,x;\bar{u}}(s))\sigma(s)\right)$$ satisfies the adjoint equation
(\ref{eq-p}), which implies%
\[
p(s)=W_{x}(s,\bar{X}^{t,x;\bar{u}}(s)),\text{ }q(s)=W_{xx}(s,\bar{X}%
^{t,x;\bar{u}}(s))\sigma(s).
\]

If $W(\cdot,\cdot)\in C^{1,4}([t,T]\times\mathbb{R})$ and $W_{sxx}(\cdot
,\cdot)$ is continuous, then, applying It\^{o}'s formula to $W_{xx}(s,\bar
{X}^{t,x;\bar{u}}(s))$, we obtain%
\begin{equation}%
\begin{array}
[c]{ll}%
&dW_{xx}(s,\bar{X}^{t,x;\bar{u}}(s))\\
&= \left\{  W_{sxx}(s,\bar{X}^{t,x;\bar
{u}}(s))+W_{xxx}(s,\bar{X}^{t,x;\bar{u}}(s))b(s)\right.\\
&\left.+\frac{1}{2}W_{xxxx}(s,\bar
{X}^{t,x;\bar{u}}(s))(\sigma(s))^{2}\right\}  ds +W_{xxx}(s,\bar{X}^{t,x;\bar{u}}(s))\sigma(s)dB(s).
\end{array}
\label{new-erds11}%
\end{equation}
Since the function $W_{s}(s,\cdot)+G(s,\cdot,W(s,\cdot),W_{x}(s,\cdot
),W_{xx}(s,\cdot),\bar{u}(s))$ achieves its minimum at $x=\bar{X}^{t,x;\bar
{u}}(s)$, we have%
\begin{equation}
\left.  \frac{\partial^{2}}{\partial x^{2}}(W_{s}(s,x)+G(s,x,W(s,x),W_{x}(s,x),W_{xx}%
(s,x),\bar{u}(s)))\right\vert _{x=\bar{X}^{t,x;\bar{u}}(s)}\geq
0.\label{new-erds12}%
\end{equation}
Set $\tilde{P}(s)=W_{xx}(s,\bar{X}^{t,x;\bar{u}}(s))$ and $\tilde
{Q}(s)=W_{xxx}(s,\bar{X}^{t,x;\bar{u}}(s))\sigma(s)$ for $s\in\lbrack t,T]$.
In order to prove $P(s)\geq\tilde{P}(s)$, by comparison theorem of BSDE for
equations (\ref{eq-P}) and (\ref{new-erds11}), we only need to check
\begin{equation}%
\begin{array}
[c]{l}%
\tilde{P}(s)\left[  (D\sigma(s)^{\intercal}(1,p(s),K_{1}(s)))^{2}+2Db(s)^{\intercal}%
(1,p(s),K_{1}(s))^{\intercal}+H_{y}(s)\right]  \\
+2\tilde{Q}(s)D\sigma(s)^{\intercal}(1,p(s),K_{1}(s))^{\intercal}+(1,p(s),K_{1}%
(s))D^{2}H(s)(1,p(s),K_{1}(s))^{\intercal}\\
+H_{z}(s)\tilde{K}_{2}(s)+W_{sxx}(s,\bar{X}^{t,x;\bar{u}}(s))+W_{xxx}(s,\bar{X}^{t,x;\bar{u}%
}(s))b(s)\\
+\frac{1}{2}W_{xxxx}(s,\bar{X}^{t,x;\bar{u}}(s))(\sigma(s))^{2}\geq0,
\end{array}
\label{new-erda11}%
\end{equation}
where%
\begin{equation}%
\begin{array}
[c]{ll}%
&\tilde{K}_{2}(s)\\
&= (1-p(s)\sigma_{z}(s))^{-1}\left\{  p(s)\sigma
_{y}(s)+2\left[  \sigma_{x}(s)+\sigma_{y}(s)p(s)+\sigma_{z}(s)K_{1}(s)\right]
\right\}  \tilde{P}(s)\\
&\ \  +(1-p(s)\sigma_{z}(s))^{-1}\left\{  \tilde{Q}(s)+p(s)(1,p(s),K_{1}%
(s))D^{2}\sigma(s)(1,p(s),K_{1}(s))^{\intercal}\right\}  .
\end{array}
\label{K2-til}%
\end{equation}
By (\ref{new-erds12}), one can verify that the inequality (\ref{new-erda11}) holds.

From the proof in the above theorem, we can obtain the following corollary.
\end{proof}

\begin{corollary}
Under the same assumptions as in Theorem \ref{new-th-11}, we have the
following relation:%
\[%
\begin{array}
[c]{rl}%
\left. \frac{\partial V}{\partial x}(s,x,W(s,x),W_x(s,x),\bar{u}(s))\right\vert _{x=\bar{X}^{t,x;\bar{u}}(s)} & =K_{1}(s),\\
\left. \frac{\partial^2V}{\partial x^2}(s,x,W(s,x),W_x(s,x),\bar{u}(s))\right\vert _{x=\bar{X}^{t,x;\bar{u}}(s)} & =\tilde{K}_{2}(s),
\end{array}
\]
where $\tilde{K}_{2}(s)$ is defined in (\ref{K2-til}).
\end{corollary}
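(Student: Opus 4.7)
The plan is to view the algebra equation $V(s,x,v,p,u)=p\,\sigma(s,x,v,V(s,x,v,p,u),u)$ as an implicit equation defining $V$, compose it along the curve $(x,W(s,x),W_x(s,x),\bar{u}(s))$, and differentiate twice in $x$, finally specializing to $x=\bar{X}^{t,x;\bar{u}}(s)$ and substituting the identifications already obtained in the proof of Theorem \ref{new-th-11}.

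First, I would record from Theorem \ref{new-th-11} the three facts that will drive every substitution: $p(s)=W_x(s,\bar{X}^{t,x;\bar{u}}(s))$, $q(s)=W_{xx}(s,\bar{X}^{t,x;\bar{u}}(s))\sigma(s)$, and $\bar{Z}^{t,x;\bar{u}}(s)=V(s,\bar{X}^{t,x;\bar{u}}(s),W(s,\bar{X}^{t,x;\bar{u}}(s)),W_x(s,\bar{X}^{t,x;\bar{u}}(s)),\bar{u}(s))$. The first identity is essentially the content of formula (\ref{eq-V_x}) already displayed in the proof of Theorem \ref{new-th-11}: after factoring $(1-W_x\sigma_z)^{-1}$, the bracket becomes $W_{xx}\sigma + W_x\sigma_x + \sigma_y W_x^2$, which at $x=\bar{X}^{t,x;\bar{u}}(s)$ reads $q(s)+p(s)\sigma_x(s)+\sigma_y(s)p(s)^2$, and this is precisely the definition of $K_1(s)$ in (\ref{new-k1}). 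So the first relation is immediate.

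For the second relation, set $\tilde V(x):=V(s,x,W(s,x),W_x(s,x),\bar{u}(s))$, so $\tilde V(x)=W_x(s,x)\,\sigma(s,x,W(s,x),\tilde V(x),\bar{u}(s))$. Differentiating once in $x$ recovers the expression for $\tilde V'(x)$ used above. Differentiating a second time, treating $W$ and $W_x$ as the $y$- and $z$-slots fed into $\sigma$ (with $\tilde V$ in the $z$-slot), and grouping the $\tilde V''$ terms on the left, gives
\begin{equation*}
\begin{array}[c]{l}
(1-W_x\sigma_z)\tilde V''(x) = W_{xxx}\sigma + 2W_{xx}\bigl[\sigma_x+\sigma_y W_x+\sigma_z \tilde V'(x)\bigr] + W_x\sigma_y W_{xx}\\[2pt]
\quad + W_x\,(1,W_x,\tilde V'(x))\,D^2\sigma\,(1,W_x,\tilde V'(x))^{\intercal}.
\end{array}
\end{equation*}
Now I would specialize to $x=\bar{X}^{t,x;\bar{u}}(s)$ and substitute $W_x=p(s)$, $W_{xx}=\tilde P(s):=W_{xx}(s,\bar{X}^{t,x;\bar{u}}(s))$, $W_{xxx}\sigma=\tilde Q(s):=W_{xxx}(s,\bar{X}^{t,x;\bar{u}}(s))\sigma(s)$, and $\tilde V'=K_1(s)$ from the first part. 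Dividing by $1-p(s)\sigma_z(s)$, the right-hand side regroups exactly into
\begin{equation*}
\begin{array}[c]{l}
(1-p\sigma_z)^{-1}\bigl\{p\sigma_y+2[\sigma_x+\sigma_y p+\sigma_z K_1]\bigr\}\tilde P \\[2pt]
\quad +(1-p\sigma_z)^{-1}\bigl\{\tilde Q+p\,(1,p,K_1)D^2\sigma(1,p,K_1)^{\intercal}\bigr\},
\end{array}
\end{equation*}
which is the defining expression (\ref{K2-til}) for $\tilde K_2(s)$. This yields the second identity.

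The main bookkeeping burden, and the only place where an error is easy to introduce, is the second differentiation: one has to apply the chain rule correctly through each of the four $x$-dependent slots of $\sigma$ (the direct $x$, $W$, $\tilde V$, and $W_x$ that appears as a multiplier outside), combine the mixed second derivatives of $\sigma$ into the quadratic form $(1,p,K_1)D^2\sigma(1,p,K_1)^\intercal$, and carefully identify the coefficient $2[\sigma_x+\sigma_y p+\sigma_z K_1]$ in front of $\tilde P$. No new analytic input is required beyond the identifications supplied by Theorem \ref{new-th-11}; it is a purely algebraic verification that the two terms produced by differentiating $V=p\,\sigma(\cdot,V,\cdot)$ match the two groupings in (\ref{K2-til}).
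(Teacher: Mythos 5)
Your proposal is correct and is essentially the paper's own argument: the paper proves this corollary by pointing back to the proof of Theorem \ref{new-th-11}, where the first identity is exactly (\ref{eq-V_x}) combined with $p(s)=W_x$, $q(s)=W_{xx}\sigma$, and the second identity is the implicit-function-theorem second differentiation underlying (\ref{K2-til}). Your explicit computation of $\tilde V''$ and its regrouping into the two terms of (\ref{K2-til}) checks out.
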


\begin{remark}
It is worth to pointing out that $\tilde{K}_{2}(\cdot)$ and $K_{2}(\cdot)$ are
closely related. If we replace $P(\cdot)$ (resp. $Q(\cdot)$) by $W_{xx}%
(\cdot,\bar{X}^{t,x;\bar{u}}(\cdot))$ (resp. $W_{xxx}(\cdot,\bar{X}%
^{t,x;\bar{u}}(\cdot))\sigma(\cdot)$) in $K_{2}(\cdot)$, then we have
$\tilde{K}_{2}(\cdot)$.
\end{remark}

If the value function is smooth enough, we can use the DPP to derive the MP in
the following theorem.

\begin{theorem}
Let Assumptions \ref{assum-1}, \ref{assum-2} and \ref{assum-3} hold. Suppose
that $\bar{u}(\cdot)\in\mathcal{U}^{w}[t,T]$ is an optimal control, and
$(\bar{X}^{t,x;\bar{u}}(\cdot),\bar{Y}^{t,x;\bar{u}}(\cdot),\bar{Z}%
^{t,x;\bar{u}}(\cdot))$ is the corresponding optimal state. Let $(p(\cdot
),q(\cdot))$ and $(P(\cdot),Q(\cdot))$ be the solutions to (\ref{eq-p}) and
(\ref{eq-P}) respectively. If $W(\cdot,\cdot)\in C^{1,4}([t,T]\times
\mathbb{R})$ and $W_{sx}(\cdot,\cdot)$, $W_{sxx}(\cdot,\cdot)$ are continuous, then
\begin{equation}
\begin{array}
[c]{l}%
\mathcal{H}(s,\bar{X}^{t,x;\bar{u}}(s),\bar{Y}^{t,x;\bar{u}}(s),\bar
{Z}^{t,x;\bar{u}}(s),u,p(s),q(s),P(s))\\
\geq\mathcal{H}(s,\bar{X}^{t,x;\bar{u}}(s),\bar{Y}^{t,x;\bar{u}}(s),\bar
{Z}^{t,x;\bar{u}}(s),\bar{u}(s),p(s),q(s),P(s)),\ \ \ \forall u\in
U\ a.e.,\ a.s..
\end{array}
\label{new-eqed122}
\end{equation}
\end{theorem}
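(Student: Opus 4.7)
The plan is to derive the maximum principle directly from the HJB equation by using the identifications between the adjoint processes and the derivatives of $W$ that were established in Theorem \ref{new-th-11}.

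First, I would invoke Theorem \ref{new-th-11} to obtain $p(s)=W_{x}(s,\bar{X}^{t,x;\bar{u}}(s))$, $q(s)=W_{xx}(s,\bar{X}^{t,x;\bar{u}}(s))\sigma(s)$, and $P(s)\geq W_{xx}(s,\bar{X}^{t,x;\bar{u}}(s))$. The key auxiliary identity to establish is the connection between the two algebra equations, namely
\[
V(s,\bar{X}^{t,x;\bar{u}}(s),W(s,\bar{X}^{t,x;\bar{u}}(s)),W_{x}(s,\bar{X}^{t,x;\bar{u}}(s)),u)=\bar{Z}^{t,x;\bar{u}}(s)+\Delta(s),
\]
for every $u\in U$. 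This follows by comparing the fixed-point equation defining $V$ at $(\bar{X},W,W_{x},u)$ with the fixed-point equation (\ref{def-delt}) for $\Delta(s)$, and using $V(s,\bar{X},W,W_{x},\bar{u}(s))=\bar{Z}^{t,x;\bar{u}}(s)$ together with $p=W_{x}$. Uniqueness of the solution to each algebra equation (guaranteed by Assumption \ref{assum-3} via the contraction property in $z$) yields the identity.

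Next, I would rewrite $G(s,\bar{X},W,W_{x},W_{xx},u)$ using this identity. Letting $\sigma_{u}:=\sigma(s,\bar{X},\bar{Y},\bar{Z}+\Delta(s),u)$ and $\bar{\sigma}:=\sigma(s,\bar{X},\bar{Y},\bar{Z},\bar{u}(s))$, one gets
\[
G(s,\bar{X},W,W_{x},W_{xx},u)=p(s)b(s,\bar{X},\bar{Y},\bar{Z}+\Delta(s),u)+\tfrac{1}{2}W_{xx}\sigma_{u}^{2}+g(s,\bar{X},\bar{Y},\bar{Z}+\Delta(s),u),
\]
and similarly $G(s,\bar{X},W,W_{x},W_{xx},\bar{u}(s))=p(s)b(s)+\tfrac{1}{2}W_{xx}\bar{\sigma}^{2}+g(s)$ (since $\Delta(s)=0$ at $\bar{u}(s)$). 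The HJB equation (\ref{eq-hjb}) together with (\ref{neq-dfds-11}) then gives $G(s,\bar{X},W,W_{x},W_{xx},u)\geq G(s,\bar{X},W,W_{x},W_{xx},\bar{u}(s))$.

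Finally, I would directly compute the difference $\mathcal{H}(\cdot,u,\cdot)-\mathcal{H}(\cdot,\bar{u}(s),\cdot)$ using the definition (\ref{def-H}) and subtract the analogous difference of $G$. Substituting $q(s)=W_{xx}(s,\bar{X})\bar{\sigma}$ and simplifying, the $b$ and $g$ contributions cancel exactly, and the $\sigma$-contributions telescope into
\[
\bigl[\mathcal{H}(u)-\mathcal{H}(\bar{u}(s))\bigr]-\bigl[G(u)-G(\bar{u}(s))\bigr]=\tfrac{1}{2}\bigl(P(s)-W_{xx}(s,\bar{X}^{t,x;\bar{u}}(s))\bigr)(\sigma_{u}-\bar{\sigma})^{2}.
\]
Since $P(s)\geq W_{xx}(s,\bar{X}^{t,x;\bar{u}}(s))$ by Theorem \ref{new-th-11}, the right-hand side is nonnegative, which combined with $G(u)\geq G(\bar{u}(s))$ yields (\ref{new-eqed122}). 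The main obstacle is the algebraic bookkeeping in the last step: one must use precisely $q=W_{xx}\bar{\sigma}$ so that the linear-in-$\sigma_{u}$ pieces cancel and only the quadratic remainder involving $P-W_{xx}$ survives. The smoothness hypotheses $W\in C^{1,4}$ with continuous $W_{sx}$ and $W_{sxx}$ are exactly what is needed to apply Theorem \ref{new-th-11} to get both the identification of $(p,q)$ and the inequality $P\geq W_{xx}$.
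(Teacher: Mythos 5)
Your proposal is correct and follows essentially the same route as the paper: it invokes the identifications $p=W_x$, $q=W_{xx}\sigma$, $P\geq W_{xx}$ from Theorem \ref{new-th-11}, establishes the key identity $V(s,\bar{X},W,W_x,u)=\bar{Z}+\Delta(s)$ exactly as in (\ref{esjd-12}), and reduces the difference of $\mathcal{H}$'s to the difference of $G$'s plus $\tfrac{1}{2}(P-W_{xx})(\sigma_u-\bar\sigma)^2$. Your explicit telescoping computation is a correct (and slightly more detailed) version of the paper's final step.
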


\begin{proof}
By (\ref{neq-dfds-11}) in Theorem \ref{new-th-11}, we have $\forall u\in U\ a.e.,\ a.s.$
\begin{equation}%
\begin{array}
[c]{l}%
G(s,\bar{X}^{t,x;\bar{u}}(s),W(s,\bar{X}^{t,x;\bar{u}}(s)),W_{x}(s,\bar
{X}^{t,x;\bar{u}}(s)),W_{xx}(s,\bar{X}^{t,x;\bar{u}}(s)),\bar{u}(s))\\
\leq G(s,\bar{X}^{t,x;\bar{u}}(s),W(s,\bar{X}^{t,x;\bar{u}}(s)),W_{x}%
(s,\bar{X}^{t,x;\bar{u}}(s)),W_{xx}(s,\bar{X}^{t,x;\bar{u}}(s)),u).
\end{array}
\label{esjd-11}%
\end{equation}
Since%
\[%
\begin{array}
[c]{rl}%
\bar{Y}^{t,x;\bar{u}}(s)= & W(s,\bar{X}^{t,x;\bar{u}}(s)),\ \bar{Z}^{t,x;\bar{u}}(s)=  W_{x}(s,\bar{X}^{t,x;\bar{u}}(s))\sigma(s),\\
p(s)= & W_{x}(s,\bar{X}^{t,x;\bar{u}}(s)),\ q(s)=  W_{xx}(s,\bar{X}^{t,x;\bar{u}}(s))\sigma(s),
\end{array}
\]
we can obtain%
\begin{equation}
V(s,\bar{X}^{t,x;\bar{u}}(s),W(s,\bar{X}^{t,x;\bar{u}}(s)),W_x(s,\bar{X}^{t,x;\bar{u}}(s)),u)=\bar{Z}^{t,x;\bar{u}}(s)+\Delta
(s)\label{esjd-12}%
\end{equation}
by the definition of $\Delta(s)$ in equation (\ref{def-delt}). Combining
(\ref{esjd-11}) and (\ref{esjd-12}), we deduce that%
\[%
\begin{array}
[c]{l}%
\mathcal{H}(s,\bar{X}^{t,x;\bar{u}}(s),\bar{Y}^{t,x;\bar{u}}(s),\bar
{Z}^{t,x;\bar{u}}(s),u,p(s),q(s),P(s))\\
\ \ -\mathcal{H}(s,\bar{X}^{t,x;\bar{u}%
}(s),\bar{Y}^{t,x;\bar{u}}(s),\bar{Z}^{t,x;\bar{u}}(s),\bar{u}%
(s),p(s),q(s),P(s))\\
\geq\frac{1}{2}\left(  P(s)-W_{xx}(s,\bar{X}^{t,x;\bar{u}}(s))\right)  \left(
\sigma(s,\bar{X}^{t,x;\bar{u}}(s),\bar{Y}^{t,x;\bar{u}}(s),\bar{Z}%
^{t,x;\bar{u}}(s),u)-\sigma(s)\right)  ^{2}.
\end{array}
\]
Noting that $P(s)\geq W_{xx}(s,\bar{X}^{t,x;\bar{u}}(s))$, then we obtain
(\ref{new-eqed122}).
\end{proof}

\subsection{The case that $\sigma$ is linear in $z$}

In this subsection, we consider the case that  $\sigma(t,x,y,z,u)=\tilde{A}(t)z+\sigma_1(t,x,y,u)$.
Under this case, we do not need the assumption that $q(\cdot)$ is bounded.
\begin{assumption}\label{sig-lin}
$\sigma(t,x,y,x,u)=\tilde{A}(t)z+\sigma_1(t,x,y,u)$,\ $||\tilde{A}(\cdot)||_{\infty}$ \text{is small enough.}
\end{assumption}
\begin{theorem}
\label{th-visco-xx}Suppose Assumptions \ref{assum-1}, \ref{assum-2},
\ref{assum-3} and \ref{sig-lin} hold. Let $\bar{u}(\cdot)$ be optimal for our problem
\eqref{obje-eq}, and let $(p(\cdot),q(\cdot))$ $\in L_{\mathbb{F}}^{\infty}(0,T;\mathbb{R}) \times L_{\mathbb{F}}%
^{2,2}(0,T;\mathbb{R}) $ and $(P(\cdot),Q(\cdot))$ $\in
L_{\mathbb{F}}^{2}(\Omega;C([0,T],\mathbb{R})) \times L_{\mathbb{F}}%
^{2,2}(0,T;\mathbb{R})$ be the solution to equation \eqref{eq-p} and
\eqref{eq-P} respectively. Then
\[
\left\{
\begin{array}
[c]{c}%
\{p(s)\}\times\lbrack P(s),\infty)\subseteq D_{x}^{2,+}W(s,\bar{X}%
^{t,x;\bar{u}}(s)),\\
D_{x}^{2,-}W(s,\bar{X}^{t,x;\bar{u}}(s))\subseteq\{p(s)\}\times(-\infty,P(s)].
\end{array}
\right.
\]

\end{theorem}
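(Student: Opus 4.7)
The plan is to follow the five-step structure of Theorem \ref{th-visco-x} verbatim, modifying only those estimates that previously relied on boundedness of $q(\cdot)$ and $P(\cdot)$. The decisive simplification when $\sigma(t,x,y,z,u)=\tilde{A}(t)z+\sigma_1(t,x,y,u)$ is that $\sigma_z(\cdot)=\tilde{A}(\cdot)$ is deterministic and $\sigma_{xz}=\sigma_{yz}=\sigma_{zz}=0$. Consequently, in formula (\ref{eq-epsil2}) one has $\tilde{\sigma}_z(r)-\sigma_z(r)\equiv 0$, so that
\[
\varepsilon_2(r)=(\tilde{\sigma}_{1x}(r)-\sigma_{1x}(r))\hat{X}(r)+(\tilde{\sigma}_{1y}(r)-\sigma_{1y}(r))(p(r)\hat{X}(r)+\varphi(r)),
\]
which no longer contains $\hat{Z}$ or $\nu$; moreover the quadratic form $(1,p,K_1)D^2\sigma(1,p,K_1)^{\intercal}=\sigma_{1xx}+2p\sigma_{1xy}+p^2\sigma_{1yy}$ is independent of $K_1$, hence of $q$. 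These two facts remove exactly those terms in (\ref{def-I}) that previously required $q$ or $P$ to be bounded.

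First I reproduce Steps 1--3 of the earlier proof: the variational FBSDE (\ref{eq-xy-hat}), the higher-moment estimate (\ref{new-eq-11}) for $\beta\in[2,8]$, and the decoupling (\ref{relation-hat}) all go through unchanged, since $p(\cdot)$ is still bounded, so the coefficients $A(\cdot),C(\cdot)$ of (\ref{eq-varphi}) lie in $L^{2,2}_{\mathbb{F}^t}$; this suffices for solvability and for the $L^2$-estimate of the BSDE, exploiting the smallness of $\|\tilde{A}\|_{\infty}$ to invert $(1-p\tilde{A})$. In Step 4, I define $\tilde{\varphi}(r)=\tfrac{1}{2}P(r)\hat{X}(r)^2$ and derive the BSDE (\ref{eq-510}) for $\hat{\varphi}=\varphi-\tilde{\varphi}$ with driver $\mathrm{I}(r)$. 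The available integrability budget is now $(p,q)\in L^{\infty}\times L^{2,2}$ and $(P,Q)\in L^{2}(\Omega;C)\times L^{2,2}$; in particular $\sup_{r}|P(r,\omega_0)|<\infty$ for $P$-a.e.\ $\omega_0$ by path continuity.

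To obtain $|\hat{\varphi}(s)|=o(|x'-\bar{X}^{t,x;\bar{u}}(s)|^2)$ $P$-a.s., I bound each summand in $\mathbb{E}\bigl[\bigl(\int_{s}^{T}|\mathrm{I}(r)|\,dr\bigr)^2\bigm|\mathcal{F}^{t}_{s}\bigr]$ by iterated H\"older with exponents chosen so that all small factors use the $\beta\in[2,8]$ moment estimates in (\ref{new-eq-11}) and (\ref{est-phi}), while the large factors $\int_{s}^{T}|q(r)|^2dr$, $\sup_{r}|P(r)|$, $\int_{s}^{T}|Q(r)|^2dr$ are absorbed by their finite (though not essentially bounded) $L^2$-moments. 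Because $\varepsilon_2$ no longer contains $\hat{Z}$ or $\nu$, the only $q$-weighted or $Q$-weighted contributions to $\mathrm{I}(r)$ are linear in $\hat{X},\varphi,\nu$, and the products $q\cdot\hat{X}\cdot(\text{factor of order }|x'-\bar{X}|^2)$, $Q\cdot\hat{X}\cdot\varphi$, etc., split cleanly; the dangerous quadratic-in-$K_{1}$ terms that would appear for general $\sigma$ are entirely absent. Modulo these bookkeeping changes, the remainder of Step 4 and all of Step 5 of Theorem \ref{th-visco-x} transfer verbatim.

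The principal obstacle is precisely the H\"older bookkeeping: one must verify term-by-term that every product in $\mathrm{I}(r)$ admits a splitting into at most three factors whose individual $L^{p}$-norms are controlled respectively by (i) the variation $|x'-\bar{X}^{t,x;\bar{u}}(s)|$ to arbitrarily high order via $\beta\le 8$, (ii) the finite $L^{2,2}$-moments of $q,Q$, and (iii) the path-wise bound on $p$ together with the $L^{2}(\Omega;C)$-bound on $P$. Smallness of $\|\tilde{A}\|_{\infty}$ enters twice: to close the algebraic identities involving $(1-p\tilde{A})^{-1}$ uniformly, and to secure well-posedness of the second-order adjoint BSDE (\ref{eq-P}) in this weakened integrability setting.
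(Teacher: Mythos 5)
Your overall plan (rerun the five steps of Theorem \ref{th-visco-x} and exploit that $\tilde{\sigma}_z-\sigma_z\equiv 0$ removes the $\hat Z$-contribution to $\varepsilon_2$ and the $K_1$-dependence of the $\sigma$-Hessian form) matches the paper's starting point, but there is a genuine gap at the analytic core. Once $q(\cdot)$ is no longer bounded, the linear BSDEs (\ref{eq-varphi}) and (\ref{eq-510}) have \emph{unbounded} coefficients: $A(r)$ contains $q(r)\sigma_y(r)$ and $C(r)$ contains $(1-p(r)\tilde A(r))^{-1}q(r)\tilde A(r)$. Merely knowing $A,C\in L^{2,2}_{\mathbb{F}^t}$ does not yield the standard a priori estimate you invoke; the paper's essential new ingredient is the exponential (BMO-type) integrability of $q$ from Theorem 5.2 in \cite{Hu-JX}, i.e. $\mathbb{E}[\exp(\lambda_1\int_s^T|q(r)|^2dr)\,|\,\mathcal{F}_s^t]\le C$, which together with the smallness of $\Vert\tilde A\Vert_\infty$ makes the exponential weight $\Gamma_1(r)\Gamma_2(r)=\exp\left(\int_s^r A(\alpha)d\alpha\right)\exp\left(-\tfrac12\int_s^r|C(\alpha)|^2d\alpha+\int_s^r C(\alpha)dB(\alpha)\right)$ have finite conditional moments of arbitrarily high order. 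The paper then works with $\varphi^1=\varphi\Gamma_1\Gamma_2$ and with $\hat\varphi\Gamma_1\Gamma_2$ to remove $A,C$ from the drivers before estimating. You never introduce this transform, so your Step 4 does not close.

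Second, your claim that the large factors $\int_s^T|q(r)|^2dr$, $\sup_r|P(r)|$, $\int_s^T|Q(r)|^2dr$ can be absorbed by their finite $L^2$-moments is insufficient. The paper's term-by-term H\"older estimates require conditional moments of these quantities of order much higher than $2$ (for instance $\mathbb{E}[(\int_s^T|q(r)|^2dr)^{45/8}|\mathcal{F}_s^t]$ and $\mathbb{E}[\sup_r|P(r)|^{45/4}|\mathcal{F}_s^t]$ appear), because the conjugate exponents must keep the small factors within the available range $\beta\le 8$ for $\hat X$ and $\beta\le 5/2$ for $(\varphi,\nu)$; these high moments are again consequences of the exponential integrability of $q$, not of the stated $L^{2,2}$ and $L^{2}(\Omega;C)$ memberships. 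Finally, the quadratic-in-$K_1$ (hence quadratic-in-$q$) terms are not entirely absent: they disappear from the $\sigma$-Hessian but persist through $b_{zz}$, $g_{zz}$ and the remainders $\varepsilon_5,\varepsilon_7$, which is why the paper's bound on $\mathrm{I}(r)$ in this case still contains a term $C(1+|q(r)|^2)\rho(r)|\hat X(r)|^2$ that must be handled with the same high-moment machinery.
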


\begin{proof}
We use the same notations as in the proof of Theorem \ref{th-visco-x}. In this case
 \begin{equation}
\varepsilon_{2}\left(  r\right) =\left(  \tilde{\sigma}_{x}
(r)-\sigma_{x}(r)\right)  \hat{X}(r)+\left(  \tilde{\sigma}_{y}
(r)-\sigma_{y}(r)\right) \left(p(r) \hat{X}\left(  r\right)+\nu(r)\right).
\end{equation}
It is easy to verify that
 \begin{equation}\label{bound-A}
\begin{array}
[c]{l}%
|A\left(  r\right)  |\leq C\left(  1+\left\vert q\left(  r\right)  \right\vert
\right)  ,\\
|C(r)|\leq C\left(  1+ ||\tilde{A}(  \cdot)  || _{\infty}  \left\vert q\left(  r\right)  \right\vert \right),
\end{array}
 \end{equation}
where $C$ is a positive constant. By Theorem 5.2 in \cite{Hu-JX}, there exists a $\delta>0$ such that for each $\lambda_{1}<\delta$,
\begin{equation}
\begin{array}
[c]{l}%
\mathbb{E}\left[  \left.  \exp\left(  \lambda_{1}\int_{s}^{T}\left\vert
q\left(  r\right)  \right\vert ^{2}dr\right)  \right\vert \mathcal{F}_{s}%
^{t}\right]  \leq C ,\\
 \mathbb{E}\left[  \left.  \sup_{s\leq\alpha\leq
T}\exp\left(  \lambda_{1}\int_{s}^{\alpha}q\left(  r\right)  dB(r)\right)
\right\vert \mathcal{F}_{s}^{t}\right]  \leq C.
\end{array}
\label{boundC}%
\end{equation}
Set, for $r\in\lbrack s,T]$,%
\[
\Gamma_{1}(r)=\exp\left(  \int_{s}^{r}A(\alpha)d\alpha\right)  ,\text{ }%
\Gamma_{2}(r)=\exp\left(  -\frac{1}{2}\int_{s}^{r}|C(\alpha)|^{2}d\alpha
+\int_{s}^{r}C(\alpha)dB(\alpha)\right)  .
\]
When $||\tilde{A}(  \cdot)  || _{\infty}$ is
small enough, by (\ref{bound-A}) and (\ref{boundC}), we can find a large
enough constant $\lambda>0$ such that%
\begin{equation}
\mathbb{E}\left[  \left.  \sup_{r\in\lbrack t,T]}|\Gamma_{1}(r)\Gamma
_{2}(r)|^{\lambda}\right\vert \mathcal{F}_{s}^{t}\right]  \leq
C.\label{eq-edsd-2}%
\end{equation}
Set
\[%
\begin{array}
[c]{l}%
\varphi^{1}\left(  r\right)  =\varphi(r)\Gamma_{1}(r)\Gamma_{2}(r),\\
\nu^{1}\left(  r\right)  =\nu\left(  r\right)  \Gamma_{1}(r)\Gamma
_{2}(r)+C\left(  r\right)  \Gamma_{1}(r)\Gamma_{2}(r)\varphi\left(  r\right)
,
\end{array}
\]
where $(\varphi(\cdot),\nu(\cdot))$ is the solution to BSDE
(\ref{eq-varphi}). We obtain that $\left(  \varphi^{1}\left(
\cdot\right)  ,\nu^{1}\left(  \cdot\right)  \right)  $ satisfies the following
BSDE by applying It\^{o}'s formula to $\varphi(\cdot)\Gamma
_{1}(\cdot)\Gamma_{2}(\cdot)$,
\begin{equation}
\left\{
\begin{array}
[c]{rl}%
d\varphi^{1}\left(  r\right)  = & -\mathrm{II}\left(  r\right)  dr+\nu^{1}\left(
r\right)  dB\left(  r\right)  ,\\
\varphi^{1}\left(  T\right)  = & \varepsilon_{4}\left(  T\right)  \Gamma
_{1}(T)\Gamma_{2}(T),
\end{array}
\right.  \label{def-phi1}%
\end{equation}
where
\begin{equation}
\begin{array}
[c]{ll}%
\mathrm{II}\left(  r\right) =&\Gamma_{1}(r)\Gamma_{2}(r)[p(r)\varepsilon_{1}\left(
r\right)  +q\left(  r\right)  \varepsilon_{2}\left(  r\right)  +\varepsilon
_{3}\left(  r\right) \\
 &+H_{z}\left(  r\right)  \left(  1-p\left(  r\right)
\tilde{A}\left(  r\right)  \right)  ^{-1}p\left(  r\right)  \varepsilon
_{2}\left(  r\right)  ].
\end{array}
\end{equation}
By the estimate of BSDE that, for each $\beta\in\lbrack2,3]$, we have
\begin{equation}%
\begin{array}
[c]{l}%
\mathbb{E}\left[  \left.  |\varepsilon_{4}(T)\Gamma_{1}(T)\Gamma
_{2}(T)|^{\beta}\right\vert \mathcal{F}_{s}^{t}\right]  \\
\leq\left\{  \mathbb{E}\left[  \left.  |\varepsilon_{4}(T)|^{4}\right\vert
\mathcal{F}_{s}^{t}\right]  \right\}  ^{\frac{\beta}{4}}\left\{
\mathbb{E}\left[  \left.  |\Gamma_{1}(T)\Gamma_{2}(T)|^{\frac{4\beta}{4-\beta
}}\right\vert \mathcal{F}_{s}^{t}\right]  \right\}  ^{\frac{4-\beta}{4}}\\
\leq C\left\vert x^{\prime}-\bar{X}^{t,x;\bar{u}%
}(s)\right\vert ^{2\beta},
\end{array}
\end{equation}%
\begin{equation}%
\begin{array}
[c]{l}%
\mathbb{E}\left[  \left.  \left(  \int_{s}^{T}|\mathrm{II}(r)|dr\right)  ^{\beta
}\right\vert \mathcal{F}_{s}^{t}\right]  \\
\leq C\left\{  \mathbb{E}\left[  \left.  \left(  \int_{s}^{T}(|\varepsilon
_{1}(r)|+(1+|q(r)|)|\varepsilon_{2}(r)|+|\varepsilon_{3}(r)|)dr\right)
^{\frac{7}{2}}\right\vert \mathcal{F}_{s}^{t}\right]  \right\}^{\frac
{2\beta}{7}}\\
\ \ \ \ \cdot\left\{  \mathbb{E}\left[  \left.  |\sup_{r\in\lbrack t,T]}%
|\Gamma_{1}(r)\Gamma_{2}(r)|^{\frac{7\beta}{7-2\beta}}\right\vert
\mathcal{F}_{s}^{t}\right]  \right\}  ^{\frac{7-2\beta}{7}}\\
\leq C \left\vert x^{\prime}-\bar{X}^{t,x;\bar{u}%
}(s)\right\vert ^{2\beta},
\end{array}
\end{equation}
then
\begin{equation}%
\begin{array}
[c]{l}%
\mathbb{E}\left[  \left.  \sup\limits_{r\in\lbrack s,T]}|\varphi
^{1}(r)|^{\beta}+\left(  \int_{s}^{T}|\nu^{1}(r)|^{2}dr\right)  ^{\frac{\beta
}{2}}\right\vert \mathcal{F}_{s}^{t}\right]  \\
\leq C\mathbb{E}\left[  \left.  |\varepsilon_{4}(T)\Gamma_{1}(T)\Gamma
_{2}(T)|^{\beta}+\left(  \int_{s}^{T}|\mathrm{II}(r)|dr\right)  ^{\beta}\right\vert
\mathcal{F}_{s}^{t}\right]  \\
\leq \left\vert x^{\prime}-\bar{X}^{t,x;\bar{u}%
}(s)\right\vert ^{2\beta}.
\end{array}
\label{eq-edsd-3}%
\end{equation}
Combining (\ref{eq-edsd-2}) (\ref{def-phi1}) and (\ref{eq-edsd-3}), we obtain
that, for each $\beta\in\lbrack2,\frac{5}{2}]$,%
\begin{equation}
\mathbb{E}\left[  \left.  \sup\limits_{r\in\lbrack s,T]}|\varphi(r)|^{\beta
}+\left(  \int_{s}^{T}|\nu(r)|^{2}dr\right)  ^{\frac{\beta}{2}}\right\vert
\mathcal{F}_{s}^{t}\right]  \leq C \left\vert x^{\prime}-\bar{X}^{t,x;\bar{u}%
}(s)\right\vert ^{2\beta}.\label{eq-edsd-4}%
\end{equation}

Similar to the above analysis, there exists a large enough $\lambda>0$ such that for
\begin{equation}
\mathbb{E}\left[  \left.  \sup\limits_{r\in\lbrack s,T]}|P(r)|^{\lambda}+\left(  \int_{s}^{T}|Q(r)|^{2}dr\right)  ^{\frac{\lambda}{2}}\right\vert
\mathcal{F}_{s}^{t}\right] \leq C.
\end{equation}
Applying It\^{o}'s formula to $\hat{\varphi}(r)\Gamma_{1}(r)\Gamma_{2}(r)$,
where $(\hat{\varphi}(\cdot),\hat{\nu}(\cdot))$ is the equation
(\ref{eq-510}) in Step 4 in the proof of Theorem \ref{th-visco-x}, we get%
\begin{equation}
\hat{\varphi}(s)=\mathbb{E}\left[  \left.  \Gamma_{1}(T)\Gamma_{2}%
(T)\varepsilon_{8}(T)+\int_{s}^{T}\Gamma_{1}(r)\Gamma_{2}(r)I(r)dr\right\vert
\mathcal{F}_{s}^{t}\right]  .\label{eq-edsd-5}%
\end{equation}

By (\ref{eq-edsd-2}) and (\ref{eq-edsd-5}), we deduce that
\begin{equation}
|\hat{\varphi}(s)|\leq C\left\{  \mathbb{E}\left[  \left.  |\varepsilon
_{8}(T)|^{\frac{9}{8}}+\left(  \int_{s}^{T}|\mathrm{I}(r)|dr\right)
^{\frac{9}{8}}\right\vert \mathcal{F}_{s}^{t}\right]  \right\}  ^{\frac{8}{9}%
}.\label{eq-edsd-6}%
\end{equation}
The estimate for $\varepsilon_{8}(T)$ is the same as Theorem 3.1. Since we relax the assumption of $q(\cdot)$ is bounded, the corresponding
$\mathrm{I}(r)$ in (\ref{def-I}) has the following upper bound
\[%
\begin{array}
[c]{rl}%
\left\vert \mathrm{I}\left(  r\right)  \right\vert \leq & C\left[  1+\left(
1+\left\vert q\left(  r\right)  \right\vert \right)  \left\vert P\left(
r\right)  \right\vert +\left\vert Q\left(  r\right)  \right\vert +\left\vert
q\left(  r\right)  \right\vert \right]  \\
& \cdot\left(  \left\vert \varphi\left(  r\right)  \right\vert +\left\vert
\nu\left(  r\right)  \right\vert +\rho\left(  r\right)  \left\vert \hat
{X}\left(  r\right)  \right\vert \right)  \left\vert \hat{X}\left(  r\right)
\right\vert +C(1+|q(r)|^{2})\rho(r)|\hat{X}(r)|^{2}\\
& +C\left(  1+\left\vert q\left(  r\right)  \right\vert +\left\vert P\left(
r\right)  \right\vert \right)  \varphi^{2}\left(  r\right)  +C\left(
1+\left\vert P\left(  r\right)  \right\vert \right)  \left\vert \nu\left(
r\right)  \right\vert ^{2},
\end{array}
\]
where $\rho(r)$ is the same as (\ref{def-rho}).
We estimate the following terms:
\[%
\begin{array}
[c]{l}%
\mathbb{E}\left[  \left.  \left(  \int_{s}^{T}\left\vert q\left(  r\right)
\right\vert P\left(  r\right)  \rho\left(  r\right)  \left\vert \hat{X}\left(
r\right)  \right\vert ^{2}dr\right)  ^{\frac{9}{8}}\right\vert \mathcal{F}%
_{s}^{t}\right]  \\
\leq\mathbb{E}\left[  \left.  \sup\limits_{r\in\lbrack t,T]}\left\vert \hat
{X}\left(  r\right)  \right\vert ^{\frac{9}{4}}\sup\limits_{r\in\lbrack
t,T]}\left\vert P\left(  r\right)  \right\vert ^{\frac{9}{8}}\left(  \int%
_{s}^{T}\left\vert q\left(  r\right)  \right\vert ^{2}dr\right)  ^{\frac
{9}{16}}\left(  \int_{s}^{T}\left\vert \rho\left(  r\right)  \right\vert
^{2}dr\right)  ^{\frac{9}{16}}\right\vert \mathcal{F}_{s}^{t}\right]  \\
\leq\left\{  \mathbb{E}\left[  \left.  \sup\limits_{r\in\lbrack t,T]}%
\left\vert \hat{X}\left(  r\right)  \right\vert ^{8}\right\vert \mathcal{F}%
_{s}^{t}\right]  \right\}  ^{\frac{9}{32}}\\
\text{ \ \ }\cdot\left\{  \mathbb{E}\left[  \left.  \sup\limits_{r\in\lbrack
t,T]}\left\vert P\left(  r\right)  \right\vert ^{\frac{36}{23}}\left(
\int_{s}^{T}\left\vert q\left(  r\right)  \right\vert ^{2}dr\right)
^{\frac{18}{23}}\left(  \int_{s}^{T}\left\vert \rho\left(  r\right)
\right\vert ^{2}dr\right)  ^{\frac{18}{23}}\right\vert \mathcal{F}_{s}%
^{t}\right]  \right\}  ^{\frac{23}{32}}\\
=o\left(  \left\vert x-\bar{X}^{t,x;\bar{u}}\left(  s\right)  \right\vert
^{\frac{9}{4}}\right);
\end{array}
\]%
\[%
\begin{array}
[c]{l}%
\mathbb{E}\left[  \left.  \left(  \int_{s}^{T}\left\vert q\left(  r\right)
\right\vert P\left(  r\right)  \left\vert \nu\left(  r\right)  \right\vert
\left\vert \hat{X}\left(  r\right)  \right\vert dr\right)  ^{\frac{9}{8}%
}\right\vert \mathcal{F}_{s}^{t}\right]  \\
\leq\mathbb{E}\left[  \left.  \sup\limits_{r\in\lbrack t,T]}\left\vert \hat
{X}\left(  r\right)  \right\vert ^{\frac{9}{8}}\sup\limits_{r\in\lbrack
t,T]}\left\vert P\left(  r\right)  \right\vert ^{\frac{9}{8}}\left(  \int%
_{s}^{T}\left\vert q\left(  r\right)  \right\vert ^{2}dr\right)  ^{\frac
{9}{16}}\left(  \int_{s}^{T}\left\vert \nu\left(  r\right)  \right\vert
^{2}dr\right)  ^{\frac{9}{16}}\right\vert \mathcal{F}_{s}^{t}\right]  \\
\leq\left\{  \mathbb{E}\left[  \left.  \sup\limits_{r\in\lbrack t,T]}%
\left\vert \hat{X}\left(  r\right)  \right\vert ^{\frac{45}{22}}%
\sup\limits_{r\in\lbrack t,T]}\left\vert P\left(  r\right)  \right\vert
^{\frac{45}{22}}\left(  \int_{s}^{T}\left\vert q\left(  r\right)  \right\vert
^{2}dr\right)  ^{\frac{45}{44}}\right\vert \mathcal{F}_{s}^{t}\right]
\right\}  ^{\frac{11}{20}}\\
\text{ \ }\cdot\left\{  \mathbb{E}\left[  \left.  \left(  \int_{s}%
^{T}\left\vert \nu\left(  r\right)  \right\vert ^{2}dr\right)  ^{\frac{5}{4}%
}\right\vert \mathcal{F}_{s}^{t}\right]  \right\}  ^{\frac{9}{20}}\\
=o\left(  \left\vert x-\bar{X}^{t,x;\bar{u}}\left(  s\right)  \right\vert
^{\frac{9}{4}}\right);
\end{array}
\]%
\[%
\begin{array}
[c]{l}%
\mathbb{E}\left[  \left.  \left(  \int_{s}^{T}\left\vert q\left(  r\right)
\right\vert ^{2}\rho\left(  r\right)  \left\vert \hat{X}\left(  r\right)
\right\vert ^{2}dr\right)  ^{\frac{9}{8}}\right\vert \mathcal{F}_{s}%
^{t}\right]  \\
\leq\mathbb{E}\left[  \left.  \sup\limits_{r\in\lbrack t,T]}\left\vert \hat
{X}\left(  r\right)  \right\vert ^{\frac{9}{4}}\left(  \int_{s}^{T}\left\vert
q\left(  r\right)  \right\vert ^{2}\rho\left(  r\right)  dr\right)  ^{\frac
{9}{8}}\right\vert \mathcal{F}_{s}^{t}\right]  \\
\leq\left\{  \mathbb{E}\left[  \left.  \sup\limits_{r\in\lbrack t,T]}%
\left\vert \hat{X}\left(  r\right)  \right\vert ^{8}\right\vert \mathcal{F}%
_{s}^{t}\right]  \right\}  ^{\frac{9}{32}}\\
\text{ \ \ }\cdot\left\{  \mathbb{E}\left[  \left.  \left(  \int_{s}%
^{T}\left\vert q\left(  r\right)  \right\vert ^{2}\rho\left(  r\right)
dr\right)  ^{\frac{36}{23}}\right\vert \mathcal{F}_{s}^{t}\right]  \right\}
^{\frac{23}{32}}\\
=o\left(  \left\vert x-\bar{X}^{t,x;\bar{u}}\left(  s\right)  \right\vert
^{\frac{9}{4}}\right);
\end{array}
\]
\[%
\begin{array}
[c]{l}%
\mathbb{E}\left[  \left.  \left(  \int_{s}^{T}\left\vert q\left(  r\right)
\right\vert \left\vert \varphi\left(  r\right)  \right\vert ^{2}dr\right)
^{\frac{9}{8}}\right\vert \mathcal{F}_{s}^{t}\right]  \\
\leq C\mathbb{E}\left[  \left.  \sup\limits_{r\in\lbrack t,T]}\left\vert
\varphi\left(  r\right)  \right\vert ^{\frac{9}{4}}\left(  \int_{s}%
^{T}\left\vert q\left(  r\right)  \right\vert ^{2}dr\right)  ^{\frac{9}{16}%
}\right\vert \mathcal{F}_{s}^{t}\right]  \\
\leq\left\{  \mathbb{E}\left[  \left.  \sup\limits_{r\in\lbrack t,T]}%
\left\vert \varphi\left(  r\right)  \right\vert ^{\frac{5}{2}}\right\vert
\mathcal{F}_{s}^{t}\right]  \right\}  ^{\frac{9}{10}}\left\{  \mathbb{E}%
\left[  \left.  \left(  \int_{s}^{T}\left\vert q\left(  r\right)  \right\vert
^{2}dr\right)  ^{\frac{45}{8}}\right\vert \mathcal{F}_{s}^{t}\right]
\right\}  ^{\frac{1}{10}}\\
=o\left(  \left\vert x-\bar{X}^{t,x;\bar{u}}\left(  s\right)  \right\vert
^{\frac{9}{4}}\right);
\end{array}
\]%
\[%
\begin{array}
[c]{l}%
\mathbb{E}\left[  \left.  \left(  \int_{s}^{T}\left\vert P\left(  r\right)
\right\vert \left\vert \nu\left(  r\right)  \right\vert ^{2}dr\right)
^{\frac{9}{8}}\right\vert \mathcal{F}_{s}^{t}\right]  \\
\leq\mathbb{E}\left[  \left.  \sup\limits_{r\in\lbrack t,T]}\left\vert
P\left(  r\right)  \right\vert ^{\frac{9}{8}}\left(  \int_{s}^{T}\left\vert
\nu\left(  r\right)  \right\vert ^{2}dr\right)  ^{\frac{9}{8}}\right\vert
\mathcal{F}_{s}^{t}\right]  \\
\leq\left\{  \mathbb{E}\left[  \left.  \sup\limits_{r\in\lbrack t,T]}%
\left\vert P\left(  r\right)  \right\vert ^{\frac{45}{4}}\right\vert
\mathcal{F}_{s}^{t}\right]  \right\}  ^{\frac{1}{10}}\left\{  \mathbb{E}%
\left[  \left.  \left(  \int_{s}^{T}\left\vert \nu\left(  r\right)
\right\vert ^{2}dr\right)  ^{\frac{5}{4}}\right\vert \mathcal{F}_{s}%
^{t}\right]  \right\}  ^{\frac{9}{10}}\\
=o\left(  \left\vert x-\bar{X}^{t,x;\bar{u}}\left(  s\right)  \right\vert
^{\frac{9}{4}}\right),
\end{array}
\]
and the others are similar. The proof is completed.
\end{proof}

\begin{theorem}
Suppose the same assumptions as in Theorem \ref{th-visco-xx}. Then, for each
$s\in\lbrack t,T]$,
\[
\left\{
\begin{array}
[c]{c}%
\lbrack\mathcal{H}_{1}(s,\bar{X}^{t,x;\bar{u}}(s),\bar{Y}^{t,x;\bar{u}%
}(s),\bar{Z}^{t,x;\bar{u}}(s)),\infty)\subseteq D_{t+}^{1,+}W(s,X^{t,x;\bar
{u}}(s)),\\
D_{t+}^{1,-}W(s,X^{t,x;\bar{u}}(s))\subseteq(-\infty,\mathcal{H}_{1}(s,\bar
{X}^{t,x;\bar{u}}(s),\bar{Y}^{t,x;\bar{u}}(s),\bar{Z}^{t,x;\bar{u}}(s))],
\end{array}
\right.
\]
where%
\[
\begin{array}
[c]{l}%
\mathcal{H}_{1}(s,\bar{X}^{t,x;\bar{u}}(s),\bar{Y}^{t,x;\bar{u}}(s),\bar
{Z}^{t,x;\bar{u}}(s))\\
=-\mathcal{H}(s,\bar{X}^{t,x;\bar{u}}(s),\bar
{Y}^{t,x;\bar{u}}(s),\bar{Z}^{t,x;\bar{u}}(s),\bar{u}%
(s),p(t),q(t),P(t))+P(s)\sigma\left(  s\right)  ^{2}.
\end{array}
\]

\end{theorem}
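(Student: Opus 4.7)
The plan is to mirror the two-step structure of Theorem \ref{th-visco-t}, keeping the same variational setup but replacing every place where the boundedness of $q(\cdot)$ was used with the exponential-weight/Hölder argument developed in the proof of Theorem \ref{th-visco-xx}. Concretely, for each $s\in[t,T)$ and $\tau \in (s,T]$, I would introduce the same perturbed trajectory $\Theta^{\tau,\bar X^{t,x;\bar u}(s);\bar u}(\cdot)$ with initial datum $\bar X^{t,x;\bar u}(s)$ at time $\tau$, form the differences $(\hat\xi_\tau,\hat\eta_\tau,\hat\zeta_\tau)$, and write the fully coupled linear variational FBSDE exactly as in Theorem \ref{th-visco-t}. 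The a~priori estimate
\[
\mathbb{E}\!\left[\sup_{r\in[\tau,T]}(|\hat\xi_\tau(r)|^\beta+|\hat\eta_\tau(r)|^\beta)+\Bigl(\int_\tau^T|\hat\zeta_\tau(r)|^2dr\Bigr)^{\!\beta/2}\,\Big|\,\mathcal{F}_s^t\right]=O(|\tau-s|^{\beta/2})
\]
(for $\beta\in[2,8]$) follows from Theorem 2.2 of \cite{Hu-JX} as before, and the remainder terms $\varepsilon_{\tau i}$, $i=1,2,3,4$, are second-order as in \eqref{est-epsi-1-4}.

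The decisive step is establishing the expansion
\[
Y^{\tau,\bar X^{t,x;\bar u}(s);\bar u}(\tau)-\bar Y^{t,x;\bar u}(\tau)=p(\tau)\hat\xi_\tau(\tau)+\tfrac12 P(\tau)\hat\xi_\tau(\tau)^2+o(|\hat\xi_\tau(\tau)|^2),\;P\text{-}a.s.,
\]
without the assumption that $q(\cdot)$ is bounded. To do this I would repeat the Step~3/Step~4 structure of Theorem \ref{th-visco-x} to produce the analogue of $\hat\varphi(\cdot)$ governed by the linear BSDE \eqref{eq-510}, then apply precisely the change-of-measure/integrating-factor procedure from Theorem \ref{th-visco-xx}: multiply $\hat\varphi$ by $\Gamma_1(r)\Gamma_2(r)=\exp(\int_s^r A)\cdot\mathcal{E}(\int_s^\cdot C\,dB)_r$, use Theorem~5.2 of \cite{Hu-JX} to obtain $\mathbb{E}[\sup_{r}|\Gamma_1(r)\Gamma_2(r)|^\lambda\mid\mathcal{F}_s^t]\le C$ for a large $\lambda$ thanks to $\|\tilde A\|_\infty$ being small, and use the $L^\lambda$-estimates of $P(\cdot),Q(\cdot)$ also derived in Theorem \ref{th-visco-xx}. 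The remainder identity
\[
\hat\varphi(s)=\mathbb{E}\!\left[\Gamma_1(T)\Gamma_2(T)\varepsilon_8(T)+\int_s^T\Gamma_1(r)\Gamma_2(r)\mathrm{I}(r)\,dr\,\Big|\,\mathcal{F}_s^t\right]
\]
together with the term-by-term Hölder bounds carried out at the end of Theorem \ref{th-visco-xx} then gives $|\hat\varphi(s)|=o(|\hat\xi_\tau(\tau)|^2)=o(|\tau-s|)$. Taking $\mathbb{E}[\,\cdot\mid\mathcal{F}_s^t]$ and using the near-diagonal estimate on $\hat\xi_\tau(\tau)$ yields the displayed expansion above, in $L^1(\mathcal{F}_s^t)$.

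Once the expansion holds, Step~2 of Theorem \ref{th-visco-t} transfers verbatim. Using the DPP inequality $W(\tau,\bar X^{t,x;\bar u}(s))\le \mathbb{E}[Y^{\tau,\bar X^{t,x;\bar u}(s);\bar u}(\tau)\mid\mathcal{F}_s^t]$, I would decompose
\[
W(\tau,\bar X^{t,x;\bar u}(s))-W(s,\bar X^{t,x;\bar u}(s))\le \mathbb{E}\!\left[p(\tau)\hat\xi_\tau(\tau)+\tfrac12 P(\tau)\hat\xi_\tau(\tau)^2-\int_s^\tau g(r)\,dr\,\Big|\,\mathcal{F}_s^t\right]+o(\tau-s),
\]
and apply It\^o's formula to $(p(\tau)-p(s))\hat\xi_\tau(\tau)$ and to $(P(\tau)-P(s))\hat\xi_\tau(\tau)^2$ exactly as in \eqref{eq-new-122}--\eqref{eq-new-123}, which produces $-p(s)\int_s^\tau b(r)\,dr-\int_s^\tau q(r)\sigma(r)\,dr$ and $\tfrac12 P(s)\int_s^\tau\sigma(r)^2\,dr$ up to $o(\tau-s)$. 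Rearranging, the right-hand side is $(\tau-s)\mathcal{H}_1(s,\bar X^{t,x;\bar u}(s),\bar Y^{t,x;\bar u}(s),\bar Z^{t,x;\bar u}(s))+o(\tau-s)$, yielding the inclusion for $D_{t+}^{1,+}W$. A pointwise $\omega$-argument on a full-measure set (chosen so that all estimates hold simultaneously for rational $\tau$) and continuity of $W$ extend it to all $\tau\downarrow s$; the sub-jet inclusion follows by the usual $\liminf$ comparison.

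The main obstacle, as in Theorem \ref{th-visco-xx}, will be producing the $o(|\tau-s|)$ bound on $\hat\varphi(s)$ when $q(\cdot)$ is only exponentially integrable: the terms in $\mathrm{I}(r)$ coupling $|q|$, $|P|$, $|Q|$ with $|\hat\xi_\tau|$, $|\varphi|$, $|\nu|$ must all be controlled via repeated Hölder splits with carefully chosen exponents matching the $L^\lambda$ bound on $\Gamma_1\Gamma_2$ and the sub-critical $L^\beta$ bounds ($\beta\in[2,5/2]$) on $\varphi,\nu$; the linearity of $\sigma$ in $z$ is used essentially to guarantee $|C(r)|\le C(1+\|\tilde A\|_\infty |q(r)|)$ so that $\Gamma_2$ remains integrable to a high enough power. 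All remaining manipulations are routine adaptations of the arguments already in place.
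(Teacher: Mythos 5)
Your proposal is correct and follows exactly the route the paper takes: the paper's own proof consists of the single remark that one repeats the argument of Theorem \ref{th-visco-t} verbatim, substituting the integrating-factor ($\Gamma_1\Gamma_2$) and H\"older estimates from the proof of Theorem \ref{th-visco-xx} wherever boundedness of $q(\cdot)$ was previously invoked. Your elaboration of how those substitutions enter (the exponential integrability of $q$, the $L^\lambda$ bounds on $P,Q$, and the role of $\|\tilde A\|_\infty$ being small in controlling $C(r)$ and hence $\Gamma_2$) is consistent with what the paper intends.
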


\begin{proof}
The proof is the same as in Theorem \ref{th-visco-t} by using the estimates in
the proof of Theorem \ref{th-visco-xx}.
\end{proof}

\subsection{The local case}

In this case, the control domain is assumed to be a convex and compact set.
Note that in the above theorems, our control domain is only supposed to be a
nonempty and compact set. Then, for the local case we can still obtain the
relations in Theorem \ref{th-visco-x} under our Assumptions \ref{assum-1},
\ref{assum-2} and \ref{assum-3}. In this subsection, we study the MP by convex
variational method and its relationship with the DPP. For the convex variational
method, we suppose that $b$, $\sigma$ and $g$ are continuously differentiable
with respect to $u$, and we only need to consider the first-order variational
equation. So, every assumptions that guarantee the existence and uniqueness of
FBSDE (\ref{state-eq}) can be used in this case. Here we use the following
monotonicity conditions as in \cite{Wu98, Li-W}.

Define%
\[
\Pi(s,x,y,z,u)=\left(  -g,b,\sigma\right)  ^{\intercal}(s,x,y,z,u).
\]

\begin{assumption}
\label{assum-monotonic}There exist three nonnegative constants $\beta_{1}$,
$\beta_{2}$, $\beta_{3}$ such that $\beta_{1}+\beta_{2}>0$, $\beta_{2}%
+\beta_{3}>0$ and $\forall s\in\lbrack0,T]$, $\forall x$, $x^{\prime}$, $y$,
$y^{\prime}$, $z$, $z^{\prime}\in\mathbb{R}$, $\forall u\in U$,%
\[
\begin{array}
[c]{l}
\langle\Pi(s,x,y,z,u)-\Pi(s,x^{\prime},y^{\prime},z^{\prime},u),(x-x^{\prime
},y-y^{\prime},z-z^{\prime})^{T}\rangle \\
 \leq-\beta_{1}|x-x^{\prime}|^{2}%
-\beta_{2}(|y-y^{\prime}|^{2}+|z-z^{\prime}|^{2}),
\end{array}
\]%
\[
(\phi(x)-\phi(x^{\prime}))(x-x^{\prime})\geq\beta_{3}\left\vert x-x^{\prime
}\right\vert ^{2}.
\]
\end{assumption}

The adjoint equation in this case is the following linear FBSDE:%
\begin{equation}
\left\{
\begin{array}
[c]{rl}%
dh(s)= & \left[  g_{y}(s)h(s)+b_{y}(s)m(s)+\sigma_{y}(s)n(s)\right]
ds\\
&+\left[  g_{z}(s)h(s)+b_{z}(s)m(s)+\sigma_{z}(s)n(s)\right]  dB(s),\\
h(t)= & 1,\\
dm(s)= & -\left[  g_{x}(s)h(s)+b_{x}(s)m(s)+\sigma_{x}(s)n(s)\right]
ds+n(s)dB(s),\text{ }s\in\lbrack t,T],\\
m(T)= & \phi_{x}(\bar{x}(T))h(T).
\end{array}
\right.  \label{nesdda-1}%
\end{equation}
Define the following Hamiltonian function:%
\[
H^{\prime}(s,x,y,z,u,h,m,n)=mb(s,x,y,z,u)+n\sigma(s,x,y,z,u)+hg(s,x,y,z,u).
\]
Suppose Assumptions \ref{assum-1} (i) and \ref{assum-monotonic} hold. Let
$\bar{u}(\cdot)\in\mathcal{U}^{w}[t,T]$ be optimal for problem \eqref{obje-eq}
and $(h(\cdot),m(\cdot),n(\cdot))$ be the solution to FBSDE (\ref{nesdda-1}).
Then Wu \cite{Wu98} obtained the following MP:%
\begin{equation}
\begin{array}
[c]{l}%
\langle H_{u}^{\prime}(s,\bar{X}^{t,x;\bar{u}}(s),\bar{Y}^{t,x;\bar{u}%
}(s),\bar{Z}^{t,x;\bar{u}}(s),\bar{u}(s),h(s),m(s),n(s)),u-\bar{u}%
(s)\rangle\geq0,\ \\
\ \  \forall u\in U\ a.e.s\in\lbrack t,T],P\text{-}a.s..
 \end{array} \label{nesdda-2}
\end{equation}

\begin{theorem}
Suppose Assumptions \ref{assum-1} (i) and \ref{assum-monotonic} hold. Let
$\bar{u}(\cdot)$ be optimal for our problem \eqref{obje-eq} and $(h(\cdot
),m(\cdot),n(\cdot))$ be the solution to FBSDE (\ref{nesdda-1}). If $L_{3}$ is
small enough, then%
\[
D_{x}^{1,-}W(s,\bar{X}^{t,x;\bar{u}}(s))\subseteq\left\{  m(s)h^{-1}%
(s)\right\}  \subseteq D_{x}^{1,+}W(s,\bar{X}^{t,x;\bar{u}}(s)),\text{
}\forall s\in\lbrack t,T],\text{ }P\text{- }a.s..
\]
\end{theorem}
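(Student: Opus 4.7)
The plan is to follow the architecture of the proof of Theorem \ref{th-visco-x}, but now using the convex (first-order) variational method in combination with the adjoint FBSDE (\ref{nesdda-1}). The main simplification is that only a first-order expansion of $\hat Y(s)$ is needed, so there will be no analogue of the second-order calculation (no $\tilde\varphi$, no $P(\cdot)$). The new feature is that the role previously played by the scalar $p(s)$ will be played by the ratio $m(s)/h(s)$, and the decoupling relation between $\hat Y(\cdot)$ and $\hat X(\cdot)$ will come from a single It\^o calculation rather than the two-stage decomposition used before. For the setup I fix $s\in[t,T]$ and a rational $x'\in\mathbb R$, consider $(X^{s,x';\bar u}(\cdot),Y^{s,x';\bar u}(\cdot),Z^{s,x';\bar u}(\cdot))$ as in (\ref{nota-new-1}) with differences $\hat X,\hat Y,\hat Z$ of (\ref{nota-new-2}), and obtain exactly as in Step~1 of Theorem \ref{th-visco-x} the variational FBSDE (\ref{eq-xy-hat}) with remainders $\varepsilon_i$ of (\ref{new-eqqw-11112}). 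Under Assumption \ref{assum-monotonic}, the FBSDE theory under monotonicity yields the conditional estimate (\ref{new-eq-11}), and continuity of the spatial derivatives upgrades these to $\mathbb E[\int_s^T|\varepsilon_i|^2 dr\mid\mathcal F_s^t]^{1/2}+\mathbb E[|\varepsilon_4(T)|^2\mid\mathcal F_s^t]^{1/2} = o(|x'-\bar X^{t,x;\bar u}(s)|)$ for $i=1,2,3$.

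The core step will be to apply It\^o's formula to $M(r):= h(r)\hat Y(r) - m(r)\hat X(r)$ on $[s,T]$. The adjoint FBSDE (\ref{nesdda-1}) is engineered so that the $dr$-coefficients of $\hat X$, $\hat Y$ and $\hat Z$ all cancel in $dM$; a routine calculation gives
\[
dM(r) = -\bigl[h(r)\varepsilon_3(r) + m(r)\varepsilon_1(r) + n(r)\varepsilon_2(r)\bigr]dr + (\cdots)dB(r),
\]
together with $M(T) = h(T)\varepsilon_4(T)$ (using $m(T) = \phi_x(\bar X(T))h(T)$ and the terminal condition on $\hat Y$). Taking conditional expectation at $s$ and applying Cauchy--Schwarz, the $L^2$-integrability of $(h,m,n)$ combined with the $\varepsilon_i$-estimates will yield
\[
h(s)\hat Y(s) - m(s)\bigl(x'-\bar X^{t,x;\bar u}(s)\bigr) = o(|x'-\bar X^{t,x;\bar u}(s)|)
\]
in a conditional-$L^1$ sense, which can then be promoted to a pointwise statement valid for every rational $x'$ on a set $\Omega_0$ of full measure by the countable-dense argument of Step~5 of Theorem \ref{th-visco-x}. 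Once $h(s,\omega_0)>0$ is established (the smallness of $L_3$ controlling the inhomogeneous coefficient $b_z m + \sigma_z n$ in $dh$), dividing by $h(s,\omega_0)$ produces the expansion $\hat Y(s) = (m(s)/h(s))(x'-\bar X^{t,x;\bar u}(s)) + o(|x'-\bar X^{t,x;\bar u}(s)|)$ pointwise on $\Omega_0$.

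The final step will invoke the DPP for monotonic FBSCSs, which gives $W(s,\bar X^{t,x;\bar u}(s)) = \bar Y^{t,x;\bar u}(s)$ and $W(s,x')\le Y^{s,x';\bar u}(s)$; combined with the expansion above and the Lipschitz continuity of $W(s,\cdot)$ (extending from rational to arbitrary $x'$), this is precisely the super-jet inclusion $m(s)h^{-1}(s)\in D_x^{1,+}W(s,\bar X^{t,x;\bar u}(s))$. Any $\hat p\in D_x^{1,-}W(s,\bar X^{t,x;\bar u}(s))$ then has to satisfy $(\hat p - m(s)h^{-1}(s))(x'-\bar X^{t,x;\bar u}(s))\le o(|x'-\bar X^{t,x;\bar u}(s)|)$ for all $x'\in\mathbb R$, and letting $x'$ approach $\bar X^{t,x;\bar u}(s)$ from above and below forces $\hat p = m(s)h^{-1}(s)$. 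The principal obstacle is verifying $h(s)>0$ $P$-a.s.\ uniformly in $s$ and transferring the conditional-$L^2$ estimates to pointwise-$\omega$ bounds valid simultaneously for every rational $x'$ and every $s\in[t,T]$; both are handled by the smallness of $L_3$ and by a double countable-dense argument (over rational $x'$ and rational $s$ combined with continuity of $h,m,\bar X,\bar Y$), respectively.
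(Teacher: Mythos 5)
Your proposal is correct and follows essentially the same route as the paper: the same variational FBSDE with remainders $\varepsilon_i$, the same It\^o computation on $h(r)\hat Y(r)-m(r)\hat X(r)$ (whose $\hat X,\hat Y,\hat Z$ coefficients indeed cancel, leaving $h(s)\hat Y(s)-m(s)\hat X(s)=\mathbb{E}[h(T)\varepsilon_4(T)+\int_s^T(m\varepsilon_1+n\varepsilon_2+h\varepsilon_3)dr\mid\mathcal F_s^t]$), the same Cauchy--Schwarz estimates, the countable-dense argument, the positivity of $h$ and identification $p(s)=m(s)h(s)^{-1}$ for $L_3$ small, and the DPP plus jet definitions to conclude. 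The paper cites Theorem 5.4 of \cite{Hu-JX} for $h(s)>0$ rather than arguing it directly, but otherwise your outline matches its proof.
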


\begin{proof}
We use notations (\ref{nota-new-2}), (\ref{new-eqqw-11112}) and equations
(\ref{nota-new-1}), (\ref{eq-xy-hat}) in Step 1 in the proof of Theorem
\ref{th-visco-x}. By the estimate of FBSDE (see \cite{Li-W}), we obtain%
\[
\mathbb{E}\left[  \left.  \sup\limits_{r\in\lbrack s,T]}\left(  |\hat
{X}(r)|^{2}+|\hat{Y}(r)|^{2}\right)  +\int_{s}^{T}|\hat{Z}(r)|^{2}%
dr\right\vert \mathcal{F}_{s}^{t}\right]  \leq C\left\vert x^{\prime}-\bar
{X}^{t,x;\bar{u}}(s)\right\vert ^{2},P\text{-}a.s..
\]
Applying It\^{o}'s formula to $h(s)\hat{Y}(s)-m(s)\hat{X}(s)$, we get%
\[%
\begin{array}
[c]{l}%
h(s)\hat{Y}(s)-m(s)\hat{X}(s)\\
=\mathbb{E}\left[  \left.  h(T)\varepsilon_{4}(T)+\int_{s}^{T}(m(r)\varepsilon
_{1}(r)+n(r)\varepsilon_{2}(r)+h(r)\varepsilon_{3}(r))dr\right\vert
\mathcal{F}_{s}^{t}\right]  .
\end{array}
\]
Then, we want to prove $h(s)\hat{Y}(s)-m(s)\hat{X}(s)=o\left(  \left\vert
x^{\prime}-\bar{X}^{t,x;\bar{u}}(s)\right\vert \right)  $, and estimate the
terms in the right hand as follows%
\begin{align*}
\mathbb{E}\left[  \left.  |h(T)\varepsilon_{4}(T)|\right\vert \mathcal{F}%
_{s}^{t}\right]   &  \leq\left\{  \mathbb{E}\left[  \left.  |\hat{X}%
(T)|^{2}\right\vert \mathcal{F}_{s}^{t}\right]  \right\}  ^{1/2}\left\{
\mathbb{E}\left[  \left.  |h(T)(\tilde{\phi}_{x}(T)-\phi
_{x}(T))|^{2}\right\vert \mathcal{F}_{s}^{t}\right]  \right\}  ^{1/2}\\
&  =o\left(  \left\vert x^{\prime}-\bar{X}^{t,x;\bar{u}}(s)\right\vert
\right)  ;
\end{align*}%
\begin{align*}
&  \mathbb{E}\left[  \left.  \int_{s}^{T}|n(r)\left(  \tilde{\sigma}%
_{z}(r)-\sigma_{z}(r)\right)  \hat{Z}(r)|dr\right\vert
\mathcal{F}_{s}^{t}\right] \\
&  \leq\left\{  \mathbb{E}\left[  \left.  \int_{s}^{T}|n(r)\left(
\tilde{\sigma}_{z}(r)-\sigma_{z}(r)\right)  |^{2}dr\right\vert
\mathcal{F}_{s}^{t}\right]  \right\}  ^{1/2}\left\{  \mathbb{E}\left[  \left.
\int_{s}^{T}|\hat{Z}(r)|^{2}dr\right\vert \mathcal{F}_{s}^{t}\right]
\right\}  ^{1/2}\\
&  =o\left(  \left\vert x^{\prime}-\bar{X}^{t,x;\bar{u}}(s)\right\vert
\right)  .
\end{align*}
The estimates for the other terms are similar. Similar to Step 5 in the proof
of Theorem \ref{th-visco-x}, we can find a subset $\Omega_{0}\subseteq\Omega$
with $P(\Omega_{0})=1$ such that for any $\omega_{0}\in\Omega_{0}$,
\[
h(s,\omega_{0})\hat{Y}(s,\omega_{0})-m(s,\omega_{0})\hat{X}(s,\omega
_{0})=o\left(  \left\vert x^{\prime}-\bar{X}^{t,x;\bar{u}}(s,\omega
_{0})\right\vert \right)  \text{ for all }s\in\lbrack t,T].
\]
By the DPP in \cite{Li-W, Hu-JX-DPP}, we obtain
\[%
\begin{array}
[c]{ll}%
&W(s,x^{\prime})-W(s,\bar{X}^{t,x;\bar{u}}(s))\\
 & \leq Y^{s,x^{\prime};\bar{u}%
}(s)-\bar{Y}^{t,x;\bar{u}}(s)\\
& =\hat{Y}(s).\\
\end{array}
\]
When $L_3$ small enough, by Theorem 5.4 in \cite{Hu-JX}, we can obtain $h(s)>0$ and $p(s)=m(s)h(s)^{-1}$.
Thus  $$\hat{Y}(s)
 =m(s)h(s)^{-1}\left(  X^{s,x^{\prime};\bar{u}}(s)-\bar{X}^{t,x;\bar{u}
}(s)\right)  +o(|x^{\prime}-\bar{X}^{t,x;\bar{u}}(s)|).$$
Since $x^{\prime}$ is arbitrary, from the definition of super-jet, we get%
\[
m(s)h(s)^{-1}\in D_{x}^{1,+}W(s,\bar{X}^{t,x;\bar{u}}(s)).
\]
Now we prove
\[
D_{x}^{1,-}W(s,\bar{X}^{t,x;\bar{u}}(s))\subseteq\left\{  m(s)h(s)^{-1}%
\right\}  .
\]
If $D_{x}^{1,-}W(s,\bar{X}^{t,x;\bar{u}}(s))$ is not empty, then taking any
$\xi\in D_{x}^{1,-}V(s,\bar{X}^{t,x;\bar{u}}(s))$, by definition of sub-jets,
we have%
\[%
\begin{array}
[c]{rl}%
0&\leq\underset{x^{\prime}\rightarrow\bar{X}^{t,x;\bar{u}}(s)}{\lim\inf
}\left\{  \frac{W(s,x^{\prime})-W(s,\bar{X}^{t,x;\bar{u}}(s))-\xi\left(
x^{\prime}-\bar{X}^{t,x;\bar{u}}(s)\right)  }{|x^{\prime}-\bar{X}^{t,x;\bar
{u}}(s)|}\right\} \\
&\leq\underset{x^{\prime}\rightarrow\bar{X}^{t,x;\bar{u}}(s)}{\lim\inf}\left\{
\frac{\left(  m(s)h(s)^{-1}-\xi\right)  \left(  x^{\prime}-\bar{X}%
^{t,x;\bar{u}}(s)\right)  }{|x^{\prime}-\bar{X}^{t,x;\bar{u}}(s)|}\right\}  .
\end{array}
\]
Thus we conclude that
\[
\xi=m(s)h(s)^{-1},\text{ }\forall s\in\lbrack t,T],\;\ P\text{-}a.s..
\]
The proof is completed.
\end{proof}

\begin{theorem}
\label{th-convex}Suppose Assumptions \ref{assum-1} (i) and
\ref{assum-monotonic} hold. Let $\bar{u}(\cdot)$ be optimal for problem
\eqref{obje-eq} and $(h(\cdot),m(\cdot),n(\cdot))$ be the solution to FBSDE
(\ref{nesdda-1}). If $L_{3}$ is small enough and the value function
$W(\cdot,\cdot)\in C^{1,2}([t,T]\times\mathbb{R})$, then%
\begin{equation}
\begin{array}[c]{ll}
\bar{Y}^{t,x;\bar{u}}(s)=W(s,\bar{X}^{t,x;\bar{u}}(s)),\\ \bar
{Z}^{t,x;\bar{u}}(s)=V(s,\bar{X}^{t,x;\bar{u}}(s),W(s,\bar{X}^{t,x;\bar{u}}(s)),W_x(s,\bar{X}^{t,x;\bar{u}}(s)),\bar{u}(s)),\text{ }%
s\in\lbrack t,T]
\end{array}
\label{eqdsaadf-1}%
\end{equation}
and for any $s\in\lbrack t,T]$,
\begin{equation}%
\begin{array}
[c]{l}%
-W_{s}(s,\bar{X}^{t,x;\bar{u}}(s))\\
  =G(s,\bar{X}^{t,x;\bar{u}}(s),W(s,\bar
{X}^{t,x;\bar{u}}(s)),W_{x}(s,\bar{X}^{t,x;\bar{u}}(s)),W_{xx}(s,\bar
{X}^{t,x;\bar{u}}(s)),\bar{u}(s))\\
 =\min\limits_{u\in U}G\left(  s,\bar{X}^{t,x;\bar{u}}(s),W(s,\bar
{X}^{t,x;\bar{u}}(s)),W_{x}(s,\bar{X}^{t,x;\bar{u}}(s)),W_{xx}(s,\bar
{X}^{t,x;\bar{u}}(s)),u),u\right).
\end{array}
\label{eqdsaadf-2}%
\end{equation}
Moreover, if $W(\cdot,\cdot)\in C^{1,3}([t,T]\times\mathbb{R})$ and
$W_{sx}(\cdot,\cdot)$ is continuous, then, for $s\in\lbrack t,T]$,%
\begin{equation}%
\begin{array}
[c]{rl}%
m(s)= & W_{x}(s,\bar{X}^{t,x;\bar{u}}(s))h(s),\\
n(s)= & \left(  1-W_{x}(s,\bar{X}^{t,x;\bar{u}}(s))\sigma_{z}\left(  s\right)
\right)  ^{-1}b_{z}(s)(W_{x}(s,\bar{X}^{t,x;\bar{u}}(s)))^{2}\\
& +g_{z}(s)W_{x}(s,\bar{X}^{t,x;\bar{u}})+W_{xx}(s,\bar{X}^{t,x;\bar{u}%
})\sigma(s)h(s),
\end{array}
\label{esddadf-1}%
\end{equation}
and  $\forall u\in U\ a.e.\;s\in\lbrack t,T],\;P\text{-}a.s.$
\begin{equation}
\langle H_{u}^{\prime}(s,\bar{X}^{t,x;\bar{u}}(s),\bar{Y}^{t,x;\bar{u}%
}(s),\bar{Z}^{t,x;\bar{u}}(s),\bar{u}(s),h(s),m(s),n(s)),u-\bar{u}%
(s)\rangle\geq0. \label{MP-convex}%
\end{equation}
\end{theorem}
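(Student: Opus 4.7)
The plan is to proceed in three stages, closely paralleling the smooth-case analysis of Theorem \ref{new-th-11} but adapted to the convex/monotonic setting where the adjoint is a linear FBSDE instead of a BSDE.

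First, I would establish \eqref{eqdsaadf-1} and \eqref{eqdsaadf-2}. By the DPP from \cite{Li-W, Hu-JX-DPP}, one has $\bar{Y}^{t,x;\bar{u}}(s) = W(s,\bar{X}^{t,x;\bar{u}}(s))$ for all $s \in [t,T]$. Applying It\^{o}'s formula to $W(s,\bar{X}^{t,x;\bar{u}}(s))$ and comparing the resulting decomposition with the BSDE satisfied by $\bar{Y}^{t,x;\bar{u}}(\cdot)$: matching diffusion coefficients gives $\bar{Z}^{t,x;\bar{u}}(s) = W_x(s,\bar{X}^{t,x;\bar{u}}(s))\sigma(s,\bar{X}^{t,x;\bar{u}}(s),\bar{Y}^{t,x;\bar{u}}(s),\bar{Z}^{t,x;\bar{u}}(s),\bar{u}(s))$, which by uniqueness of the algebra equation \eqref{eq-V} identifies $\bar{Z}^{t,x;\bar{u}}(s) = V(s,\bar{X}^{t,x;\bar{u}}(s),W,W_x,\bar{u}(s))$; matching drifts gives $W_s + G(s,\bar{X}^{t,x;\bar{u}}(s),W,W_x,W_{xx},\bar{u}(s)) = 0$. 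Since $W \in C^{1,2}$ is a classical solution of the HJB equation \eqref{eq-hjb}, this zero value is the minimum, so $\bar{u}(s)$ attains the infimum at $x=\bar{X}^{t,x;\bar{u}}(s)$, yielding \eqref{eqdsaadf-2}.

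Second, I would identify the adjoint processes under $W \in C^{1,3}$ with continuous $W_{sx}$. The strategy is to verify that the triple $\bigl(h(\cdot), W_x(\cdot,\bar{X}^{t,x;\bar{u}}(\cdot))h(\cdot), \tilde n(\cdot)\bigr)$, where $\tilde n$ is defined by the right-hand side of \eqref{esddadf-1}, solves the linear FBSDE \eqref{nesdda-1}, and then invoke the uniqueness of this FBSDE (guaranteed by Assumption \ref{assum-monotonic}). The terminal condition matches since $W_x(T,\cdot) = \phi_x$. For the dynamics, apply It\^{o} to $W_x(s,\bar{X}^{t,x;\bar{u}}(s))h(s)$: the diffusion coefficient takes the form $hW_{xx}\sigma + W_x(g_z h + b_z m + \sigma_z n)$, and setting this equal to $n$ and solving algebraically using $m = W_x h$ yields exactly the formula for $n$ in \eqref{esddadf-1}. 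For the drift, the key input is that the function $x \mapsto W_s(s,x) + G(s,x,W,W_x,W_{xx},\bar{u}(s))$ attains its minimum at $x = \bar{X}^{t,x;\bar{u}}(s)$ (by the HJB equation and step one), so its $x$-derivative vanishes there; expanding this identity (as was done in \eqref{deri-hjb}) expresses $W_{sx} + \tfrac12 W_{xxx}\sigma^2$ in exactly the combination needed to match $-[g_x h + b_x m + \sigma_x n]$ after substituting the just-derived expression for $n$.

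Third, I would derive \eqref{MP-convex} from the HJB minimization. Since $U$ is convex and the map $u \mapsto G(s,\bar{X}^{t,x;\bar{u}}(s),W,W_x,W_{xx},u)$ attains its minimum at $\bar{u}(s)$, the first-order condition $\langle G_u(\bar{u}(s)), u-\bar{u}(s)\rangle \ge 0$ holds for every $u \in U$. Differentiating the algebra equation \eqref{eq-V} gives $V_u = (1-W_x\sigma_z)^{-1}W_x \sigma_u$, and a direct computation of $G_u$ with the shorthand $\lambda := (1-W_x\sigma_z)^{-1}$ reduces, after using the collapse identity $1 + \lambda W_x \sigma_z = \lambda$, to $h(s) G_u = m(s) b_u + n(s) \sigma_u + h(s) g_u = H'_u$ at the optimum. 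Since $L_3$ is small, Theorem 5.4 in \cite{Hu-JX} gives $h(s) > 0$, so the sign of $\langle H'_u, u-\bar{u}(s)\rangle$ coincides with that of $\langle G_u, u-\bar{u}(s)\rangle$ and \eqref{MP-convex} follows.

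The main obstacle will be the algebraic identification $h G_u = H'_u$ in step three: the cancellation relies on showing that every $V_u$-induced term in $G_u$ from the $z$-dependence of $b$, $\sigma$, $g$ reassembles into $\sigma_u n /h$ using the explicit formula for $n$, which in turn depends on successfully completing the drift matching in step two. This step-two drift matching is itself delicate because the HJB derivative identity must encode $W_{sx}$ precisely in the combination required by the $m$-equation, and positivity of $1-W_x\sigma_z$ (which underlies invertibility throughout) must be controlled via the smallness of $L_3$.
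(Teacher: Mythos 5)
Your proposal is correct and follows essentially the same route as the paper: part one via the DPP plus It\^{o}'s formula applied to $W(s,\bar{X}^{t,x;\bar{u}}(s))$, part two by applying It\^{o}'s formula to $W_x(s,\bar{X}^{t,x;\bar{u}}(s))h(s)$ and checking that the resulting triple solves \eqref{nesdda-1} (the drift matching coming from the vanishing $x$-derivative of $W_s+G$ at the optimal trajectory, as in \eqref{deri-hjb}), and part three via the first-order condition in $u$ together with the implicit-function-theorem formula $V_u=(1-W_x\sigma_z)^{-1}W_x\sigma_u$ and the positivity of $h$ for $L_3$ small. The algebraic collapse $hG_u=H_u'$ that you flag as the main obstacle is exactly the computation the paper performs by combining \eqref{esddadf-1}, \eqref{asds-121} and \eqref{asds-122}, and it goes through as you describe.
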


\begin{proof}
The proof for (\ref{eqdsaadf-1}) and (\ref{eqdsaadf-2}) is the same as in
Theorem \ref{new-th-11}. Applying It\^{o}'s formula to $W_{x}(s,\bar
{X}^{t,x;\bar{u}}(s))h(s)$, one can check that $(h(\cdot),m(\cdot),n(\cdot))$
with $(m(\cdot),n(\cdot))$ given in (\ref{esddadf-1}) solves FBSDE
(\ref{nesdda-1}). By (\ref{eqdsaadf-2}), we have
\[%
\begin{array}
[c]{l}%
G(s,\bar{X}^{t,x;\bar{u}}(s),W(s,\bar{X}^{t,x;\bar{u}}(s)),W_{x}(s,\bar
{X}^{t,x;\bar{u}}(s)),W_{xx}(s,\bar{X}^{t,x;\bar{u}}(s)),\bar{u}(s))\\
\leq G\left(  s,\bar{X}^{t,x;\bar{u}}(s),W(s,\bar{X}^{t,x;\bar{u}}%
(s)),W_{x}(s,\bar{X}^{t,x;\bar{u}}(s)),W_{xx}(s,\bar{X}^{t,x;\bar{u}%
}(s)),u),u\right) \\
 \text{ }\forall u\in U\ a.e.,\ a.s..
\end{array}
\]
Thus we obtain
\begin{small}
\[%
\begin{array}
[c]{l}%
\left\langle \left.  \frac{\partial}{\partial u}G\left(  s,\bar{X}%
^{t,x;\bar{u}}(s),W(s,\bar{X}^{t,x;\bar{u}}(s)),W_{x}(s,\bar{X}^{t,x;\bar{u}%
}(s)),W_{xx}(s,\bar{X}^{t,x;\bar{u}}(s)),u\right)  \right\vert _{u=\bar
{u}(s)},\right.  \\
\text{ } \left.  u-\bar{u}\left(  s\right)  \right\rangle \geq0,\forall u\in
U\ a.e.,\ a.s.,
\end{array}
\]
\end{small}
which implies
\begin{equation}%
\begin{array}
[c]{l}%
\left\langle \left\{  W_{x}(s,\bar{X}^{t,x;\bar{u}}(s))\left[  b_{z}\left(
s\right)  V_{u}(s)+b_{u}(s)\right]
\right.  \right. \\
+W_{xx}(s,\bar{X}^{t,x;\bar{u}}(s))\sigma(s)\left[  \sigma_{z}\left(
s\right)  V_{u}(s)+\sigma_{u}(s)\right]
\\
\left.  \left.  +g_{z}\left(  s\right)  V_{u}(s)\right\}  ,u-u\left(  s\right)  \right\rangle
\geq0,\forall u\in U\ a.e.,\ a.s.,
\end{array}
\label{asds-121}%
\end{equation}
where $$ V_u(s)=\left.\frac{\partial V}{\partial u}(s,\bar{X}^{t,x;\bar{u}}(s),W(s,\bar{X}^{t,x;\bar{u}}(s)),W_x(s,\bar{X}^{t,x;\bar{u}}(s)),u)\right\vert_{u=\bar{u}(s)}.$$
By implicit function theorem, we deduce that%
\begin{equation}
\begin{array}
[c]{l}
\left.\frac{\partial V}{\partial u}(s,\bar{X}^{t,x;\bar{u}}(s),W(s,\bar{X}^{t,x;\bar{u}}(s)),W_x(s,\bar{X}^{t,x;\bar{u}}(s)),u)\right\vert_{u=\bar{u}(s)}\\
=\left(  1-W_{x}(s,\bar
{X}^{t,x;\bar{u}}(s))\sigma_{z}\left(  s\right)  \right)  ^{-1}W_{x}(s,\bar
{X}^{t,x;\bar{u}}(s))\sigma_{u}(s).
\end{array}\label{asds-122}%
\end{equation}
Combing (\ref{esddadf-1}), (\ref{asds-121}) and (\ref{asds-122}), we obtain
the desired results (\ref{MP-convex}).
\end{proof}

\begin{remark}
From Theorems \ref{new-th-11} and \ref{th-convex}, we can obtain the following
relationship between $(p\left(  \cdot\right)  ,q\left(  \cdot\right)  )$ and
$\left(  h\left(  \cdot\right)  ,m(\cdot),n\left(  \cdot\right)  \right)  $:%
\begin{align*}
m(s)  &  =p(s)h(s);\\
n(s)  &  =\left(  1-p(s)\sigma_{z}(s)\right)  ^{-1}[b_{z}(s)p(s)^{2}%
+p(s)g_{z}(s)+q(s)]h(s).
\end{align*}

\end{remark}


\begin{thebibliography}{99}                                                                                               %


\bibitem {Barron}E. N. Barron and R. Jensen, The Pontryagin maximum principle
from dynamic programming and viscosity solutions to first-order partial
differential equations, Trans. Amer. Math. Soc., 298 (1986), pp. 635--641.

\bibitem {Bensoussan}A. Bensoussan, Lectures on stochastic control, Lecture
Notes in Mathematics, vol. 972, Springer-Verlag, Berlin, 1982.

\bibitem {Crandall-lecture}M. G. Crandall, H. Ishii, and P. L. Lions, User's
guide to viscosity solutions of second order partial differential equations,
Bull. Amer. Math. Soc., 27 (1992), pp. 1--67.

\bibitem {Cvi-Zhang}J. Cvitani\'{c} and J. Zhang, Contract theory in
continuous-time models. Springer-Verlag, 2013.

\bibitem {Dokuchaev-Zhou}M. Dokuchaev and XY. Zhou, Stochastic controls with
terminal contingent conditions. J. Math. Anal. Appl.238 (1999):pp. 143-165.

\bibitem {Elkaroui-PQ}N. El Karoui, S. Peng and MC. Quenez, Backward stochastic differential equation in finance. Mathematical Finance,
7(1)(1997):pp.1-71

\bibitem {Fleming}W.H. Fleming, R.W. Rishel, Deterministic and Stochastic
Optimal Control, Springer-Verlag, New York, 1975.

%

\bibitem {Hu-2017}M. S. Hu, Stochastic global maximum principle for
optimization with recursive utilities, Probability, Uncertainty and
Quantitative Risk, 2 (2017), 1.

\bibitem {Hu-JX}M. Hu, S. Ji and X. Xue, A global stochastic maximum principle
for fully coupled forward-backward stochastic systems, (2018), arXiv:1803.02109.

\bibitem {Hu-JX-DPP}M. Hu, S. Ji and X. Xue, The existence and uniqueness of viscosity
 solutions to generalized Hamilton-Jacobi-Bellman equations, (2018), arXiv:1805.02337.

\bibitem {Hu-Peng95}Y. Hu, S Peng, Solution of forward-backward stochastic
differential equations. Probability Theory and Related Fields, 103(2)(1995) :pp. 273-283.

\bibitem {Ji-Zhou}S. Ji and X.Y. Zhou, A maximum principle for stochastic
optimal control with terminal state constrains and its applications. Comm.
Inf. Syst. 6(2006):pp. 321-337.

\bibitem {Li jun}J. Li, Note on stochastic control problems related with
general fully coupled forward-backward stochastic differential equations,
2012, arXiv:1206.5376v1.

\bibitem {Li-W}J. Li, Q. Wei, Optimal control problems of fully coupled FBSDEs
and viscosity solutions of Hamilton--Jacobi--Bellman equations, SIAM J.
Control Optim. 52 (2014) 1622--1662.

\bibitem {Ma-WZZ}J. Ma, Z. Wu,D. Zhang and J.Zhang. On well-posedness of
forward--backward SDEs-A unified approach. The Annals of Applied Probability,
25(4) (2015): 2168-2214.

\bibitem {Ma-Y}J. Ma and J. Yong, Forward-backward stochastic differential
equations and their applications. Springer Science \& Business Media, 1999.

\bibitem {Nie-SW-1}T. Nie, J. Shi, and Z. Wu, Connection between MP and DPP
for Stochastic Recursive Optimal Control Problems: Viscosity Solution
Framework in Local Case, in Proceedings of the 2016 American Control
Conference, Boston, 2016, pp. 7225--7230.

\bibitem {Nie-SW-2}T. Nie, J. Shi, and Z. Wu, Connection between MP and DPP
for Stochastic Recursive Optimal Control Problems: Viscosity Solution
Framework in the general Case, SIAM J. Control Optim., 55 (2017), pp. 3258--3294.

\bibitem {Pardoux-Tang}E. Pardoux, S. Tang, Forward-backward stochastic
differential equations and quasilinear parabolic PDEs. Probability Theory and
Related Fields, 114(2) (1999): 123-150.


\bibitem {Peng-1992}S. Peng, A generalized dynamic programming principle and
Hamilton--Jacobi--Bellman equation, Stoch. \& Stoch. Rep., 38 (1992), pp. 119--134.

\bibitem {Peng-1993}S. Peng, Backward stochastic differential equations and
applications to the optimal control, Appl. Math. Optim., 27 (1993), pp. 125--144.

\bibitem {Peng-lecture}S. Peng, Backward stochastic differential
equations--stochastic optimization theory and viscosity solutions of HJB
equations, in Topics on Stochastic Analysis, J. Yan et al., eds., Science
Press, Beijing, 1997, pp. 85--138. (in Chinese)

\bibitem {Peng-open}S. Peng, Open problems on backward stochastic differential
equations, in Control of Distributed Parameter and Stocastic Systems, S. Chen
et al., eds., Springer-Verlag, New York, 1998, pp. 265--273.

\bibitem {Shi}J. Shi, The relationship between maximum principle and dynamic
programming principle for stochastic recursive optimal control problems and
applications to finance. Proc. 29th Chinese Control Conf., 1535-1540, July
29-31, Beijing, China, 2010.

\bibitem {Shi-Y}J. Shi, Z. Yu, Relationship between maximum principle and
dynamic programming for stochastic recursive optimal control problems and
applications. Math. Prob. Engin., Vol. 2013, Article ID 285241, 12 pages.

\bibitem {Shi-Wu}J. Shi and Z. Wu, The maximum principle for fully coupled
forward-backward stochastic control system. Acta Automat. Sinica 32(2006):pp. 161-169.

\bibitem {Wu98}Z. Wu, Maximum principle for optimal control problem of fully
coupled forward-backward stochastic systems. Syst. Sci. Math. Sci. 11
(1998),pp. 249--259.

\bibitem {Wu-2013}Z. Wu, A general maximum principle for optimal control
problems of forward-backward stochastic control systems, Automatica, 49
(2013), pp. 1473--1480.

\bibitem {Yong}J. Yong, A leader-follower stochastic linear quadratic differential game,
SIAM J. Control Optim., 41(4) (2002), pp. 1015--1041.


\bibitem {Yong-2010}J. Yong, Optimality variational principle for controlled
forward-backward stochastic differential equations with mixed initial-terminal
conditions, SIAM J. Control Optim., 48 (2010), pp. 4119--4156.

\bibitem {Yong-Zhou}J. Yong and X. Y. Zhou, Stochastic Controls: Hamiltonian
Systems and HJB Equations, Springer-Verlag, New York, 1999.

\bibitem {Zhou-1990-1}X. Y. Zhou, Maximum principle, dynamic programming, and
their connection in determinsitc control, J. Optim. Theory Appl., 65 (1990),
pp. 363--373.

\bibitem {Zhou-1990-2}X. Y. Zhou, The connection between the maximum principle
and dynamic programming in stochastic control, Stoch. \& Stoch. Rep., 31
(1990), pp. 1--13.

\bibitem {Zhou-1991}X. Y. Zhou, A unfied treatment of maximum principle and
dynamic programming in stochastic controls, Stoch. \& Stoch. Rep., 36 (1991),
pp. 137--161.

\end{thebibliography}
\end{document}